\renewcommand{\phi}{\varphi}
\renewcommand{\rho}{\varrho}
\renewcommand{\epsilon}{\varepsilon}
\newcommand{\cB}{\mathcal{B}}
\newcommand{\cE}{\mathcal{E}}
\newcommand{\cL}{\mathcal{L}}
\newcommand{\RR}{\mathbb{R}}
\newcommand{\NN}{\mathbb{N}}
\newcommand{\ZZ}{\mathbb{Z}}
\newcommand{\QQ}{\mathbb{Q}}
\newcommand{\dx}{{\mathrm d}}
\newcommand{\supp}{\mathrm{supp}}
\renewcommand{\mod}{\mathrm{\ mod\ }}
\newcommand{\law}{\cL}
\newcommand{\bR}{\mathbb{R}}
\newcommand{\bN}{\mathbb{N}}
\newcommand{\re}{\mathrm{e}}
\newcommand{\di}{\mathrm{d}}
\newcommand{\ri}{\mathrm{i}}
\newcommand{\one}{\mathbf{1}}
\definecolor{violet}{rgb}{0.5, 0.0, 1.0}
\newtheorem{theorem}{Theorem}[section]
\newtheorem{lemma}[theorem]{Lemma}
\newtheorem{corollary}[theorem]{Corollary}
\newtheorem{remark}[theorem]{Remark}
\newtheorem{proposition}[theorem]{Proposition}
\newtheorem{example}[theorem]{Example}
\numberwithin{equation}{section}
\begin{document}
\date{February 1, 2021}
\author{Anita Behme\thanks{Technische Universit\"at Dresden, Institut f\"ur Mathematische Stochastik, 01062 Dresden, Germany, e-mail: anita.behme@tu-dresden.de}\,\,, Alexander Lindner,\, Jana Reker\thanks{Ulm University, Institute of Mathematical Finance, 89081 Ulm, Germany, e-mails: alexander.lindner@uni-ulm.de, jana.reker@uni-ulm.de}\,\, and Victor Rivero\thanks{Centro de Investigaci\'on Matem\'atics A.C. Calle Jalisco s/n Col. Mineral de Valenciana, C.P. 36240 Guanajuato, Guanajuato, M\'exico, e-mail: rivero@cimat.mx}}
\title{\vspace{-1cm}Continuity properties and the support of killed exponential functionals}
\maketitle
\begin{abstract}
For two independent L\'evy processes $\xi$ and $\eta$ and an exponentially distributed random variable $\tau$ with parameter $q>0$, independent of $\xi$ and $\eta$, the  killed exponential functional is given by $V_{q,\xi,\eta} := \int_0^\tau \re^{-\xi_{s-}} \, \di \eta_s$. Interpreting the case $q=0$ as~$\tau=\infty$, the random variable $V_{q,\xi,\eta}$ is a natural generalization of the exponential functional $\int_0^\infty \re^{-\xi_{s-}} \, \di \eta_s$, the law of which is well-studied in the literature as it is the stationary distribution of a generalised Ornstein--Uhlenbeck process. In this paper we show that also the law of the killed exponential functional $V_{q,\xi,\eta}$ arises as a stationary distribution of a solution to a stochastic differential equation, thus establishing a close connection to generalised Ornstein--Uhlenbeck processes. Moreover,
the support and continuity of the law of killed exponential functionals is characterised, and many sufficient conditions for absolute continuity are derived. We also obtain various new sufficient conditions for absolute continuity of $\smash{\int_0^te^{-\xi_{s-}}\dx\eta_s}$ for fixed $t\geq0$, as well as for  integrals of the form~$\smash{\int_0^\infty f(s) \, \dx\eta_s}$ for deterministic functions $f$. Furthermore, applying the same techniques to the case~$q=0$, new results on the absolute continuity  of the improper integral~$\int_0^\infty \re^{-\xi_{s-}} \, \di \eta_s$ are derived.
\end{abstract}

\noindent
{\em AMS 2010 Subject Classifications:} \, primary: 60E07;
secondary: 60G30, 60G51, 60H10

\noindent
{\em Keywords:}
Generalised Ornstein-Uhlenbeck Process, Exponential Functional, L\'{e}vy processes, Killing, Absolute Continuity, Support.

\section{Introduction}

Given two independent real-valued L\'evy processes $\xi$ and $\eta$, the \emph{generalised Ornstein--Uhlenbeck (GOU) process $(X_t)_{t\geq 0}$ driven by $\xi$ and $\eta$}
 is defined by
\begin{align} \label{eq-GOU}
X_t=\re^{-\xi_t}\Bigl(\int_0^t \re^{\xi_{s-}}d\eta_s+X_0\Bigr),\ t\geq0,
\end{align}
where $X_0$ is a starting random variable, independent of $\xi$ and $\eta$. This type of process has become a standard tool in stochastic modeling and thus appears in numerous applications such as finance and insurance, see, e.g., \cite{BNS01}, \cite{KLM2004}, or \cite{PAULSEN_RisksStochasticEnvironment1992} for a few examples. The GOU process can equivalently be  defined as the unique solution of the stochastic differential equation~(SDE) 
\begin{equation} \label{eq-GOUSDE} \di X_t = X_{t-} \, \di U_t + \di \eta_t, \quad t\geq 0,\end{equation} 
with starting value $X_0$, where $U$ is another L\'evy process, independent of $\eta$, and defined by the property that $\cE(U)_t = \re^{-\xi_t}$, where $\cE(U)$ denotes the Dol\'eans--Dade stochastic exponential of $U$, cf. \cite[p.428]{MallerMuellerSzimayer2009}. The GOU process is a Markov process, and it is known~(cf.~\cite[Thm. 2.1]{LindnerMaller2005}) that, provided $\xi$ and $\eta$ are not both deterministic,  it has an invariant probability distribution (i.e., stationary marginal distribution) if and only if the stochastic integral $\int_0^t \re^{-\xi_{s-}} \, \di \eta_s$  converges almost surely to a finite limit as~$t\to\infty$, in which case the limit random variable
\begin{align} \label{def-functional} V_{0,\xi,\eta} := \int_0^\infty \re^{-\xi_{s-}} \, \di \eta_s := \lim_{t\to \infty} \int_0^t \re^{-\xi_{s-}} \, \di \eta_s\end{align}
is called the \emph{exponential functional of $(\xi,\eta)$} and it has the same law as the invariant distribution. Necessary and sufficient conditions for convergence of the integral have been obtained by Erickson and Maller \cite{EricksonMaller2005}.
GOU processes and the law of $V_{0,\xi,\eta}$ are well studied in the literature, see, e.g., the survey paper~\cite{BertoinYor2005} or~\cite{BehmeLindner2015},~\cite{BehmeLindnerMaejima2016},~\cite{BertoinLindnerMaller2008},~\cite{CarmonaPetitYor1997},~\cite{deHaanKarandikar1989},~\cite{KuznetsovPardoSavov2012}, \cite{PardoRiveroSchaik}, to name just a few. In particular, in \cite[Thm. 1]{BehmeLindnerMaejima2016}, the support of $V_{0,\xi,\eta}$ has been characterised and shown to be always an interval, and in \cite[Thms. 2.2, 3.9]{BertoinLindnerMaller2008}, a characterisation of continuity of the law of $V_{0,\xi,\eta}$ and some sufficient conditions for its absolute continuity where derived.

\medskip
While we do add various new sufficient conditions for absolute continuity of the law of~$V_{0,\xi,\eta}$, the main focus of this paper is on killed exponential functionals. Let $\xi$ and $\eta$ be two independent L\'evy processes, $q\in (0,\infty)$ and $\tau$ an exponentially distributed random variable with parameter $q$, independent of $\xi$ and~$\eta$. Then
\begin{align}\label{def-killedfunctional}
V_{q,\xi,\eta} := \int_0^\tau e^{-\xi_{s-}} \, \di \eta_s
\end{align}
is called a {\it killed exponential functional of $(\xi,\eta)$ with parameter $q$}. Interpreting the case where $q$ is equal to zero as $\tau=\infty$, we obtain the classical case $\smash{\int_0^\infty e^{-\xi_{s-}} d\eta_s}$ as defined above and, unless the killing is explicitly specified, the term~"exponential functional" always refers to the improper integral \eqref{def-functional}. However, we may emphasize the difference by writing "exponential functional without killing" for $V_{0,\xi,\eta}$. Killed exponential functionals have so far mainly been considered when $\eta_t=t$, cf. \cite[Thm. 2]{Yor92}, \cite{PardoRiveroSchaik},~\cite{PatieSavov} or \cite{moehle}, to name just a few, where in \cite{PardoRiveroSchaik} absolute continuity of the law of $V_{q,\xi,t}$ is established, while in \cite{PatieSavov} the smoothness of the density is considered and the support of $V_{q,\xi,t}$ characterised. In \cite[Thm. 2]{Yor92} and \cite[p. 38]{moehle}, the distribution of $V_{q,\xi,t}$ is determined when $\xi$ is a Brownian motion or a particular L\'evy process.
Another important special case of the killed exponential functional is when $\xi_t = 0$, in which case $V_{q,0,\eta} = \eta_\tau$, which can be interpreted as the L\'evy process~$\eta$ subordinated by a gamma process with parameters 1 and $q>0$, evaluated at time 1. The law of $V_{q,0,\eta}$ is then~$q$ times the potential measure of $\eta$, cf. \cite[Def. 30.10]{Sato2013}.

\medskip
We start this paper recalling some preliminaries in Section \ref{S2}. Afterwards, to motivate our further studies on killed exponential functionals, we show in Section~\ref{S3} that, similar to the case without killing, also the law of $V_{q,\xi,\eta}$ appears naturally as the unique invariant probability measure of a certain Markov process that can be defined via an SDE of the form \eqref{eq-GOUSDE}. This proves the killed exponential functional $V_{q,\xi,\eta}$ to be a natural generalisation of the exponential function $V_{0,\xi,\eta}$ which appears whenever systems described via \eqref{eq-GOUSDE} are considered in a more general context. 
The resulting Markov process and some distributional properties of it have already been studied in \cite{BehmeLindnerMaller2011} and \cite{Behme2011}. We proceed in Section \ref{S4} to present a characterisation of the support of $V_{q,\xi,\eta}$ for $q>0$ and general L\'{e}vy processes~$\xi$ and $\eta$, which, unlike the cases $V_{0,\xi,\eta}$ or $V_{q,\xi,t}$, turns out not to be always an interval. Nevertheless, we find that the support of the law of the killed exponential functional is an interval in most cases and contains an unbounded interval under mild conditions on the jump structure of~$\xi$ and $\eta$ in the remaining cases.
In Section \ref{S5neu} we fully  characterise continuity of the law of $V_{q,\xi,\eta}$. Further, various sufficient conditions for absolute continuity of the law of~$V_{q,\xi,\eta}$ and of $V_{0,\xi,\eta}$ are obtained. Many of these conditions are new for the case without killing as well, so, e.g., we show (Theorem \ref{t-ac5}) that, as long as one of the two processes $\xi$ and $\eta$ is non-deterministic, $V_{0,\xi,\eta}$ is absolutely continuous whenever $\xi$ has a Gaussian component, or is of finite variation with non-zero drift, or if its L\'evy measure has an absolutely continuous component.
Since many of the mentioned results concerning continuity are derived by conditioning on the paths of~$\xi$ or on the killing time $\tau$, we treat integrals of the form $\smash{\int_0^\infty f(s) \, \di \eta_s}$, where $f$ is a deterministic function, as well as $\smash{\int_0^t \re^{-\xi_{s-}} \, \di \eta_s}$ for fixed $t\geq 0$ in Section \ref{S6new}, before being able to prove in Section \ref{S5newproofs} the results that were stated in Section \ref{S5neu}. The final Section \ref{S4proofs} contains the rather technical proofs of the results stated in Section \ref{S4}.


\section{Preliminaries} \label{S2}
A real-valued \emph{L\'evy process} $L=(L_t)_{t\geq0}$ is a stochastic process having stationary and independent increments, that starts in 0 and has almost surely (a.s.) c\`adl\`ag paths, i.e., paths that are right-continuous with finite left-limits. The distribution of a L\'evy process at time~$t\geq 0$ is infinitely divisible, and by the L\'evy-Khintchine formula its characteristic function is given by
$$E \re^{\ri z L_t} = \exp (t \Psi_L(z)) , \quad z\in \bR,$$
where $\Psi_L$ denotes the \emph{characteristic exponent} satisfying
\begin{align*}
\Psi_L(z)=\ri \gamma_Lz-\sigma_L^2z^2/2 +\int_{\RR}(\re^{\ri zx}-1-\ri zx\one_{[-1,1]}(x))\nu_L(\dx x), \quad z\in \bR.
\end{align*}
Here, $\sigma_L^2\geq 0$ is the \emph{Gaussian variance} of $L$, $\nu_L$ is the \emph{L\'evy measure} of $L$, $\gamma_L$ is the \emph{location parameter}, and $(\sigma_L^2,\nu_L,\gamma_L)$ is the \emph{characteristic triplet} of $L$. Note that in this paper we use the convention of defining $\nu_L$ as a measure on $\RR$ satisfying $\nu_L(\{0\})=0$. Whenever the process is of finite variation, i.e., when $\sigma_L^2=0$ and $\smash{\int_{[-1,1]} |x|\, \nu_L(\di x)} < \infty$, the \emph{drift}~${\gamma_L^0=\gamma_L-\smash{\int_{[-1,1]}x\, \nu_L(\dx x)}}$ is well-defined and we often use $\gamma_L^0$ instead of $\gamma_L$ to describe the distribution of $L_t$ in this case. A L\'evy process $L$ is \emph{spectrally positive} if $\nu_L((-\infty,0)) = 0$, and \emph{spectrally negative} if $\nu_L((0,\infty) = 0$. For further information on L\'{e}vy processes and infinitely divisible distributions we refer to \cite{Bertoin1996} or~\cite{Sato2013}.

\medskip
When considering the paths of $L$, we denote the space of real-valued c\`{a}dl\`{a}g functions on $[0,\infty)$ by $D([0,\infty),\RR)$. Further, for $f\in D([0,\infty),\RR)$, we set $f(s-)$ for the left-hand limit of $f$ at $s\in(0,\infty)$ and $\Delta f(s)=f(s)-f(s-)$ for the jumps.
Throughout the analysis, we often make use of certain subsets of the real numbers $\RR$, specifically~$\RR^*=\RR\setminus\{0\}$, $\RR_+=[0,\infty)$ and $\RR_-=(-\infty,0]$. The Borel $\sigma$-algebras on $\RR$ and $\RR^*$ are denoted by~$\cB_1$ and~$\cB_1^*$, respectively. Further, the symbol $\lambda^1$ is used for the (one-dimensional) Lebesgue measure on~$\cB_1$. The law of a random variable~$X$ is denoted by either~$\cL(X)$ or~$P_X$. When speaking of (absolute) continuity we will always mean (absolute) continuity with  respect to $\lambda^1$, and we say that a random variable~$X$ is (absolutely) continuous if its distribution $\law(X)$ has this property. The support of a measure $\mu$ on $(\bR,\cB_1)$ is denoted by $\supp(\mu)$, and by the support $\supp(X)$ of a random variable $X$  we mean the support of its distribution. Recall that $\supp(\mu)$ and $\supp(X)$ are by definition closed sets. If the random variable $X$ is exponentially distributed with parameter $q\geq0$, we set~$X\smash{\overset{d}{=}}{\rm{Exp}}(q)$ with the case $q=0$ being interpreted as $X\equiv\infty$.

\medskip
For integrals,  we assume the integral bounds to be included when using the notation~$\smash{\int_a^b}$ for $a,b\in\RR$ and indicate that the left bound is excluded by writing $\smash{\int_{a+}^b}$. Integrals of the form $\int_0^t \re^{-\xi_{s-}} \, \di \eta_s$ will be interpreted as integrals with respect to semimartingales as, e.g., in \cite{Protter2005}.
Since in this paper we shall always restrict to independent L\'evy processes $\xi$ and~$\eta$,
the stochastic (semimartingale)-integral $\smash{\int_0^t \re^{-\xi_{s-}} \, \di \eta_s}$ given the path $\xi=f$ is nothing else than the semimartingale-integral (equivalently, the Wiener-integral) $\int_0^t \re^{-f(s-)} \,\di \eta_s$ of the deterministic function $f$ with respect to $\eta$; this follows easily from the definition of the semimartingale-integral as in Protter \cite[Sect. II.4]{Protter2005}, e.g., when realising $\xi$ and~$\eta$ on a product space. The semimartingale-integral  $\smash{\int_0^t \re^{-f(s-)} \, \di \eta_s}$ then agrees with the corresponding stochastic integral in the sense of Rajput and Rosinski \cite[p. 460]{RajputRosinski1989} as both are limits in probability of integrals of simple functions, for which the corresponding integrals trivially agree. If a function $f:[0,t] \to \bR$ is integrable in the sense of Rajput and Rosinski (in particular this is satisfied when $f$ is bounded and measurable), then the distribution of the integral $\int_0^t f(s) \, \di \eta_s$ is infinitely divisible with characteristic exponent~$\Psi_{f,t}(z) = \smash{\int_0^t \Psi_\eta(f(s) z)\, \di s}$, $z\in \bR$, and
characteristic triplet $(\sigma_{f,t}^2,\nu_{f,t},\gamma_{f,t})$ given~by
\begin{align}
\sigma_{f,t}^2 & = \sigma_\eta^2 \int_0^t f(s)^2 \, \di s , \label{eq-LM7}\\
\nu_{f,t}(B) & =  \int_0^t \int_\bR \one_{B\setminus \{0\}} (f(s) x) \, \nu_\eta(\di x) \, \di s,  \quad B \in \cB_1, \quad \mbox{and}
\label{eq-LM1}\\
\gamma_{f,t} & =  \int_0^t \left[ f(s) \gamma_\eta + \int_\bR f(s) x \left( \one_{\{ |f(s)x| \leq 1\}} - \one_{\{|x|\leq 1\}} \right) \, \nu_\eta(\di x) \right] \, \di s ,\label{eq-LM5}
\end{align}
cf. \cite[Prop. 2.6, Thm. 2.7]{RajputRosinski1989} or \cite[Prop. 57.10]{Sato2013}.


\section{Killed exponential functionals as invariant distributions of Markov processes} \label{S3}

Throughout this section let $\xi$ and $\eta$ be two independent L\'evy processes.

\medskip
As mentioned in the introduction, the exponential functional $V_{0,\xi,\eta}$ without killing~(provided it converges) describes the stationary distribution of the GOU process \eqref{eq-GOU}, cf. \cite[Thm. 2.1]{LindnerMaller2005}. In this section we shall see that also the killed exponential functional $V_{q,\xi,\eta}$ arises as a stationary solution of a related Markov process.

\medskip
To formulate our result, recall that the stochastic exponential $\cE(L)$ of a L\'evy process~$L$ is the unique solution to the SDE $\dx\cE(L)=\cE(L)_{s-}\dx L_s$ with ${\cE(L)_0=1}$ a.s. It can be given explicitly through the Dol\'{e}ans-Dade formula (see, e.g.,~\cite[Thm. 37]{Protter2005}) according to which
$$\cE(L)_t = \exp\Bigl(L_t-\sigma_L^2 t/2 \Bigr)\prod_{0<u\leq t}(1+\Delta L_u)e^{-\Delta L_u}, \quad t\geq 0.$$

Thus defining
\begin{equation} \label{def-U}
U_t = - \xi_t + t \sigma_\xi^2/2 + \sum_{0<s\leq t} \left( \re^{-\Delta \xi_s} - 1 + \Delta \xi_s\right), \quad t\geq 0,
\end{equation}
it is readily checked that $U$ is a L\'evy process satisfying $\cE(U)_t = \re^{-\xi_t}$ and ${\nu_U((-\infty,-1]) = 0}$. Further, the GOU process $X$ of \eqref{eq-GOU} with starting random variable $X_0$, independent of~$\xi$ and $\eta$, is the unique solution of the SDE
\begin{equation*}
\di X_t = X_{t-} \, \di U_t + \di \eta_t, \quad t\geq 0,
\end{equation*}
cf. \cite[p.428]{MallerMuellerSzimayer2009}.

\medskip
We may now establish Markov processes whose stationary distributions coincide with killed exponential functionals as follows.

\begin{theorem}\label{t-sdewithkilling}
Let $q\in (0,\infty)$, define the L\'evy process $U$ by \eqref{def-U} and let $N$ be a Poisson process with parameter $q$, which is independent of $U$.
 Define
		$$\widetilde{U} := U - N,$$
		then $\widetilde{U}$ is a L\'evy process with L\'evy measure $\nu_{\widetilde{U}}$ such that  $\nu_{\widetilde{U}} (\{-1\}) = q$. Further
\begin{equation}\label{eq-representations}
V_{q,\xi,\eta}\stackrel{d}{=}\int_0^\infty \mathcal{E}(\widetilde{U})_{s-} \, \di \eta_s
\end{equation}
and
$\law(V_{q,\xi,\eta})$ is the unique invariant probability measure of the Markov process ${X=(X_t)_{t\geq 0}}$ satisfying the SDE
	\begin{equation} \label{eq-diff1}
	\di X_t = X_{t-} \, \di \widetilde{U}_t + \di\eta_t, \quad t \geq 0,
	\end{equation}
	with starting random variable $X_0$ independent of $\widetilde{U}$ and $\eta$. The solution of \eqref{eq-diff1} is given~by
\begin{equation} \label{eq-diff1-sol}
X_t = \re^{-\xi_t} X_0 \one_{\{N(t)=0\}} + \re^{-\xi_{t} } \int_{T(t)+}^t \re^{\xi_{s-} } \, \di \eta_s,
\end{equation}
where $T(t)$ denotes the time of the last jump of $N$ before $t$, with the convention that~${T(t)=0}$ if no jump of $N$ occurs before time $t$. In particular,
if $Z$ is a random variable, independent of $(\xi,\eta,N)$ and with the same distribution as $V_{q,\xi,\eta}$, then $Z$ satisfies the random fixed point equation
$$Z \stackrel{d}{=} \re^{-\xi_t} \one_{\{N(t)=0\}} Z + \re^{-\xi_{t} } \int_{T(t)+}^t \re^{\xi_{s-} } \, \di \eta_s$$
for each $t> 0$.
\end{theorem}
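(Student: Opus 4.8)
My plan is to establish the pathwise representation~\eqref{eq-representations} first, deduce the invariant-measure assertion from it together with the known theory of generalised Ornstein--Uhlenbeck processes with jumps, and finally read off the random fixed point equation. For the opening claim about $\widetilde U$, note that since $U$ and $N$ are independent L\'evy processes, $\widetilde U=U-N$ is a L\'evy process with L\'evy measure $\nu_{\widetilde U}=\nu_U+q\,\delta_{-1}$ (the characteristic exponents add); as $\nu_U((-\infty,-1])=0$ (noted after~\eqref{def-U}) we have $\nu_U(\{-1\})=0$, hence $\nu_{\widetilde U}(\{-1\})=q$.

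For~\eqref{eq-representations} I would compute $\mathcal E(\widetilde U)$ explicitly via the Dol\'eans--Dade formula. Write $T_1<T_2<\dots$ for the jump times of $N$. Since $U$ (a measurable functional of $\xi$) and $N$ are independent L\'evy processes, a.s.\ they share no jump time, and $\Delta U_s=\re^{-\Delta\xi_s}-1\ne-1$; so $\Delta\widetilde U_s=-1$ exactly at the times $T_k$, $\Delta\widetilde U_s=\Delta U_s$ otherwise, and $\sigma_{\widetilde U}^2=\sigma_\xi^2$. Isolating the vanishing factor $(1+\Delta\widetilde U_{T_1})\re^{-\Delta\widetilde U_{T_1}}=0$ in the product formula yields $\mathcal E(\widetilde U)_t=\re^{-\xi_t}$ for $t<T_1$ and $\mathcal E(\widetilde U)_t=0$ for $t\ge T_1$, i.e.\ $\mathcal E(\widetilde U)_t=\re^{-\xi_t}\one_{\{N(t)=0\}}$ and $\mathcal E(\widetilde U)_{s-}=\re^{-\xi_{s-}}\one_{\{s\le T_1\}}$. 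Hence $\int_0^\infty\mathcal E(\widetilde U)_{s-}\,\di\eta_s=\int_0^{T_1}\re^{-\xi_{s-}}\,\di\eta_s$, which converges a.s.\ (the integrand is locally bounded and $T_1<\infty$ a.s.); and since $T_1\stackrel{d}{=}\mathrm{Exp}(q)$ is independent of $(\xi,\eta)$ exactly as $\tau$ is, one has $(\xi,\eta,T_1)\stackrel{d}{=}(\xi,\eta,\tau)$ and therefore $\int_0^{T_1}\re^{-\xi_{s-}}\,\di\eta_s\stackrel{d}{=}\int_0^\tau\re^{-\xi_{s-}}\,\di\eta_s=V_{q,\xi,\eta}$, which is~\eqref{eq-representations}.

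The linear SDE~\eqref{eq-diff1} with semimartingale drivers has a pathwise unique strong solution by~\cite{Protter2005}, and it is a time-homogeneous Markov process because $(\widetilde U,\eta)$ has stationary independent increments. To identify the solution with~\eqref{eq-diff1-sol} I would argue piecewise: on each $[T_k,T_{k+1})$ one has $\di\widetilde U=\di U$, so there $X$ solves the GOU equation $\di X_t=X_{t-}\,\di U_t+\di\eta_t$ and equals the GOU process of~\eqref{eq-GOU} restarted at time $T_k$ from $X_{T_k}$; and at each $T_k$ with $k\ge1$, $\Delta X_{T_k}=X_{T_k-}\Delta\widetilde U_{T_k}+\Delta\eta_{T_k}=-X_{T_k-}$ (as $\Delta\widetilde U_{T_k}=-1$ and $\Delta\eta_{T_k}=0$ a.s., since $\eta\perp N$), so $X_{T_k}=0$. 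Assembling these pieces with the starting value at time $0$ gives~\eqref{eq-diff1-sol}. For the invariant measure I would invoke the results of~\cite{BehmeLindnerMaller2011} (see also~\cite{Behme2011}) on the SDE $\di X_t=X_{t-}\,\di\widetilde U_t+\di\eta_t$: whenever $\int_0^\infty\mathcal E(\widetilde U)_{s-}\,\di\eta_s$ converges a.s.---which holds here by the previous paragraph---$\mathcal L\bigl(\int_0^\infty\mathcal E(\widetilde U)_{s-}\,\di\eta_s\bigr)$ is the marginal law of a stationary version of $X$, and by~\eqref{eq-representations} this equals $\mathcal L(V_{q,\xi,\eta})$. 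Uniqueness is immediate from~\eqref{eq-diff1-sol}: coupling two solutions with the same driver $(\xi,\eta,N)$ and starting values $x,y$ gives $X_t^{x}-X_t^{y}=\re^{-\xi_t}(x-y)\one_{\{N(t)=0\}}$, hence $\|P_t(x,\cdot)-P_t(y,\cdot)\|_{\mathrm{TV}}\le\PP(N(t)=0)=\re^{-qt}$, and integrating this bound against two invariant measures and letting $t\to\infty$ forces them to coincide. Finally, taking $Z\stackrel{d}{=}V_{q,\xi,\eta}$ independent of $(\xi,\eta,N)$ as starting value, stationarity gives $X_t\stackrel{d}{=}V_{q,\xi,\eta}\stackrel{d}{=}Z$ for every $t>0$, and~\eqref{eq-diff1-sol} then reads precisely as the asserted random fixed point equation.

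The main obstacle is the careful handling in the third step of the fact that $\mathcal E(\widetilde U)$ is absorbed at $0$ exactly at the first jump of $N$---which rests on the a.s.\ absence of common jumps of the independent processes $U$ and $N$ and on $\Delta U_s\ne-1$---and on checking that this degenerate regime is within the scope of the cited GOU-with-jumps results (it is: the vanishing of the integrand makes their a.s.-convergence hypothesis automatic). If a self-contained proof of stationarity is preferred, it can instead be obtained by conditioning on $\{N(t)=0\}$ versus $\{N(t)\ge1\}$ and matching the conditional laws with those of $V_{q,\xi,\eta}$ conditioned on $\{\tau>t\}$ respectively $\{\tau\le t\}$, using the lack of memory of $\tau$ on one side and the time-reversal identity $(\re^{-\xi_t},\re^{-\xi_t}\int_0^t\re^{\xi_{s-}}\,\di\eta_s)\stackrel{d}{=}(\re^{-\xi_t},\int_0^t\re^{-\xi_{s-}}\,\di\eta_s)$ for exponential functionals on the other.
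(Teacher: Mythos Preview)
Your proposal is correct and follows essentially the same overall strategy as the paper: compute $\mathcal{E}(\widetilde U)$ via Dol\'eans--Dade, observe it is killed at $T_1$, deduce~\eqref{eq-representations}, and then appeal to the theory of~\cite{BehmeLindnerMaller2011} for existence of a stationary solution of~\eqref{eq-diff1}.

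Where you differ is in two places. For the explicit form~\eqref{eq-diff1-sol}, the paper quotes the general solution formula from~\cite[Prop.~3.2, Eq.~(2.7)]{BehmeLindnerMaller2011} in terms of the multiplicative increments $\mathcal{E}(\widetilde U)_{s,t}$ and then simplifies, whereas you construct the solution piecewise on the inter-jump intervals of $N$ using the classical GOU formula~\eqref{eq-GOU} and the reset $X_{T_k}=0$; your route is more elementary and avoids the inverse-exponential integrals. For uniqueness of the invariant measure, the paper relies on~\cite[Thm.~2.2]{BehmeLindnerMaller2011}, while you give a self-contained coupling argument based on $X_t^x-X_t^y=\re^{-\xi_t}(x-y)\one_{\{N(t)=0\}}$ and the bound $\re^{-qt}$; this is neat and exploits the killing structure directly. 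The paper, on the other hand, is a bit more explicit in identifying the stationary law: it first obtains the form $\mathcal{E}(U)_\tau\int_0^\tau\mathcal{E}(U)_{s-}^{-1}\,\di\eta_s$ from~\cite{BehmeLindnerMaller2011} and then applies the time-reversal identity~\cite[Lem.~3.1]{BehmeLindnerMaller2011} to match it with $V_{q,\xi,\eta}$---the identity you mention only as an alternative at the end.
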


\begin{proof}
Observe that $\Delta \widetilde{U}_t =-1$ if and only if $\Delta N_t = 1$, i.e., $N$ counts the jumps of size $-1$ of $\widetilde{U}$, and that	the L\'evy measure $\nu_{\widetilde{U}}$ of $\smash{\widetilde{U}}$ is concentrated on $[-1,\infty)$ with $\nu_{\widetilde{U}}(\{-1\}) = q$. Since $\mathcal{E}(\smash{\widetilde{U}})_t=0$ whenever $N_t\geq 1$ and $\cE(\smash{\widetilde{U}})_t = \cE({U})_t = \re^{-\xi_{t}}$ on $\{N_t=0\}$, and since the time of the first jump of $N$ is exponentially distributed with parameter~$q$,  we obtain Equation \eqref{eq-representations}. By \cite[Thm. 2.2 and Prop. 3.2]{BehmeLindnerMaller2011}, the SDE $\di X_t = X_{t-} \di \widetilde{U}_t + \di \eta_t$ has a strictly stationary solution, unique in distribution, and the corresponding marginal distribution is given by
	$\cE(U)_\tau \smash{\int_0^\tau \cE(U)_{s-}^{-1} \, \di\eta_s},$
where $\tau$ is ${\rm{Exp}}(q)$-distributed, independent from~$(U,\eta)$.
	Further, the process $(X_t)_{t\geq 0}$ defined in \eqref{eq-diff1} is a homogeneous Markov process (cf. \cite[Lem.~3.3]{BehmeLindnerMaller2011}), so that the marginal strictly stationary distribution is the unique invariant probability measure of the Markov process. Since
	$\smash{\cE(U)_t \int_{(0,t]} \cE(U)^{-1}_{s-} \, d\eta_s \stackrel{d}{=} \int_0^t \cE(U)_{s-} \, d\eta_s}$
	for every fixed $t>0$ by \cite[Lem.~3.1]{BehmeLindnerMaller2011}, we obtain that
	$$\cE(U)_\tau \int_0^\tau \cE(U)_{s-}^{-1} \, d\eta_s \stackrel{d}{=} \int_0^\tau \cE(U)_{s-} \, d\eta_s \stackrel{d}{=} V_{q,\xi,\eta}$$
	by conditioning on $\tau$. This shows that $\law(V_{q,\xi,\eta})$ is the unique invariant probability measure of the Markov process $X$.

\medskip
To see the specific form \eqref{eq-diff1-sol} of the solution of \eqref{eq-diff1},
denote for $0\leq s\leq t$
\begin{align*}
\cE(\widetilde{U})_{s,t}&=\exp\Bigl((\widetilde{U}_t-\widetilde{U}_s)-{\sigma_{\widetilde{U}}^2(t-s)}/{2}\Bigr)
\prod_{s<u\leq t}(1+\Delta \widetilde{U}_u)e^{-\Delta \widetilde{U}_u}.
\end{align*}
By \cite[Prop. 3.2, Eq.~(2.7)]{BehmeLindnerMaller2011}, the solution $X=(X_t)_{t\geq 0}$ of \eqref{eq-diff1} is given by
\begin{eqnarray*}
X_t & = & \cE(\widetilde{U})_t \left( X_0 + \int_{0+}^t \left[ \cE (\widetilde{U})_{s-} \right]^{-1} \, \di \eta_s \right) \one_{\{N(t) = 0\}} \\
& & + \cE (\widetilde{U})_{T(t),t} \int_{T(t)+}^t \left[ \cE(\widetilde{U})_{T(t),s-}\right]^{-1} \, \di \eta_s \; \one_{\{N(t)\geq 1\}}.
\end{eqnarray*}
Since $\widetilde{U}_s - \widetilde{U}_{T(t)} = U_s- U_{T(t)}$ for $s\in (T(t),t)$ we see from the Dol\'eans-Dade formula that
$$\cE(\widetilde{U})_{T(t),s} = \re^{-(\xi_s-\xi_{T(t)})} \quad \mbox{for} \quad s\in (T(t), t).$$
Hence, the above can be rewritten (a.s. for fixed $t$) as
$$X_t = \re^{-\xi_t} X_0 \one_{\{N(t)=0\}} + \re^{-(\xi_{t} - \xi_{T(t)})} \int_{T(t)+}^t \re^{\xi_{s-} - \xi_{T(t)}} \, \di \eta_s.$$ Since $\xi$ and $T(t)$ are independent of $\eta$, we can pull out $\re^{-\xi_{T(t)}}$ from the last integral leading to \eqref{eq-diff1-sol}. The desired fixed point equation is now immediate, since $\law(V_{q,\xi,\eta})$ is the invariant probability measure of $X$ with independent starting value $X_0$.
\end{proof}

\begin{remark} \label{rem-killed}
{\rm In the situation of Theorem \ref{t-sdewithkilling}, define the killed L\'evy process $\smash{\widetilde{\xi}}$ with killing rate $q$ and cemetery $\infty$ by
$$\widetilde{\xi}_t = \begin{cases} \xi_t, & t < \tau,\\
+\infty, & t \geq \tau,
\end{cases}$$
where $\tau$ is ${\rm{Exp}}(q)$-distributed, independent of $(\xi,\eta)$. Then
$$V_{q,\xi,\eta} = \int_0^\infty \re^{-\widetilde{\xi}_{s-}} \, \di \eta_s,$$ so that $V_{q,\xi,\eta}$ can be seen as an exponential functional with respect to $\smash{\widetilde{\xi}}$ and $\eta$. The killed L\'evy process $\smash{\widetilde{\xi}}$ and $\smash{\widetilde{U}}$ are related by
$\smash{\cE(\widetilde{U}) = \re^{-\widetilde{\xi}}}$.}
\end{remark}

\begin{remark}
{\rm Since in the situation of Theorem \ref{t-sdewithkilling}, the characteristic triplet of $-N$ is given by $(0,q\delta_{-1},-q)$, where $\delta_{-1}$ denotes the Dirac-measure at $-1$, the characteristic triplet of $\smash{\widetilde{U}}$ can be expressed in terms of the characteristic triplet of $U$ via
$$\sigma_{\widetilde{U}}^2 = \sigma_U^2, \quad \nu_{\widetilde{U}} = \nu_U + q \delta_{-1}, \quad \gamma_{\widetilde{U}} = \gamma_U - q.$$
Additionally, if $U$ is of finite variation, then so is $\widetilde{U}$, and the drifts of $\smash{\widetilde{U}}$ and $U$ are equal. The key difference between the describing SDEs for the exponential functional without killing and the killed exponential functional can then be seen in the L\'evy measure of $U$ and $\smash{\widetilde{U}}$, respectively, since $\nu_U((-\infty,-1])=0$ while $\nu_{\widetilde{U}}(\{-1\}) = q$.}
\end{remark}

 Distributional properties like moments or tail behaviour of the stationary solution of~\eqref{eq-diff1} have been investigated in \cite{Behme2011}. Theorem \ref{t-sdewithkilling} hence allows to apply these results to $V_{q,\xi,\eta}$. Further,  Theorem \ref{t-sdewithkilling} opens the way for deriving various distributional equations for the characteristic function and the distribution function of $V_{q,\xi,\eta}$, in a spirit similar to~\cite{BehmeLindner2015} or \cite{KuznetsovPardoSavov2012}. This will be explored in more detail in the forthcoming paper~\cite{Paper2}.

\section{Supports of killed exponential functionals}\label{S4}

In this section, we consider the support of the distribution of the killed exponential functional~$V:=V_{q,\xi,\eta}$, where we assume throughout that $\xi$ and $\eta$ are independent L\'evy processes with characteristics $(\sigma_\xi^2, \nu_\xi,\gamma_{\xi})$ and $(\sigma_\eta^2, \nu_\eta,\gamma_{\eta})$, respectively, and independent of~$\smash{\tau\overset{d}= \text{Exp}(q)}$ with $q\in (0,\infty)$. As before we denote by $\gamma_\xi^0$, $\gamma_{\eta}^0$ the drift of $\xi$ or $\eta$ whenever it exists.

\medskip
In \cite[Thm. 1]{BehmeLindnerMaejima2016} the support of the exponential functional  $V_{0,\xi,\eta}$ without killing was completely characterised and in particular it was shown that it is always an interval. This is no longer true when considering $q\in (0,\infty)$, as can be seen from the results in this section. We will give a general characterization of the support of $V_{q,\xi,\eta}$ in Theorem~\ref{thm-support} and then study the case where both $\xi$ and $\eta$ are pure compound Poisson processes more closely in Proposition~\ref{thm-support2}. The proofs of both results are postponed to Section \ref{S4proofs}. Note that in case of a non-zero deterministic process~$\eta$, the support of a possibly killed exponential functional has been characterised in \cite[Thm.~2.4(2)]{PatieSavov} in terms of the Wiener-Hopf factorization of~$\xi$, showing that it is always an interval. The special case~$q=0$~(corresponding to $\tau=\infty$) of~\cite[Thm. 2.4(2)]{PatieSavov} was already proven by different means in \cite{BehmeLindnerMaejima2016}. Both results are included in the following theorem. We will, however, present an alternative proof in Section \ref{S4proofs} that does not use the Wiener-Hopf factorization. Observe that Theorem~\ref{thm-support} covers all possible combinations of $\xi$ and $\eta$ and, therefore, provides a complete characterization of the support of the exponential functional in the killed case.

\begin{theorem}[Support of killed exponential functionals]\label{thm-support} $\,$
\begin{itemize}
\item[(i)] If $\eta\equiv0$, then $\supp(V)=\{0\}$.
\item[(ii)] Assume that $\eta$ is deterministic with $\gamma_{\eta}^0>0$, then
\begin{align*}
\supp(V)=\begin{cases}[0,\tfrac{\gamma_{\eta}^0}{\gamma_{\xi}^0}],\ &\text{if}\ \xi\ \text{is\ a\ subordinator\ with}\ \gamma_{\xi}^0>0, \\
[0,\infty),\ &\text{otherwise},\end{cases}
\end{align*}
and if $\gamma_{\eta}^0<0$, then
\begin{align*}
\supp(V)=\begin{cases}[\tfrac{\gamma_{\eta}^0}{\gamma_{\xi}^0},0],\ &\text{if}\ \xi\ \text{is\ a\ subordinator\ with}\ \gamma_{\xi}^0>0, \\
(-\infty,0],\ &\text{otherwise}.\end{cases}
\end{align*}
\item[(iii)] Assume that one of the following cases holds
\begin{itemize}
\item[(a)] $\eta$ is of infinite variation,
\item[(b)] $\eta$ is of finite variation with $0\in\supp(\nu_{\eta})$ and $\nu_{\eta}((-\infty,0))>0$, $\nu_{\eta}((0,\infty))>0$,
\item[(c)] $\eta$ is of finite variation with $\gamma_{\eta}^0\neq0$ and $\nu_{\eta}((-\infty,0))>0$, ${\nu_{\eta}((0,\infty))>0}$,
\end{itemize}
then $\supp(V)=\RR$.
\item[(iv)] Assume that $\eta$ is non-deterministic, of finite variation and spectrally positive/negative, as well as ${0\in\supp(\nu_{\eta})}$ or $\gamma_{\eta}^0\neq0$, then
\begin{align*}
\supp(V)=\begin{cases}[0,\infty),\ &\text{if}\ \eta\ \text{is\ a\ subordinator}, \\
(-\infty,0],\ \ &\text{if}\ -\eta\ \text{is\ a\ subordinator}.\end{cases}
\end{align*}
If under these assumptions neither $\eta$ nor $-\eta$ is a subordinator, we have that
\begin{align*}
\supp(V)=\begin{cases}(-\infty,\tfrac{\gamma_{\eta}^0}{\gamma_{\xi}^0}],\ &\text{if}\ \gamma_{\eta}^0>0\ \text{and}\ \xi\ \text{is\ a\ subordinator\ with}\ \gamma_{\xi}^0>0, \\
[\tfrac{\gamma_{\eta}^0}{\gamma_{\xi}^0},\infty),\ &\text{if}\ \gamma_{\eta}^0<0\ \text{and}\ \xi\ \text{is\ a\ subordinator\ with}\ \gamma_{\xi}^0>0, \\
\RR,\ & otherwise.\end{cases}
\end{align*}
\item[(v)] Assume that $\eta$ is a compound Poisson process with $0\notin\supp(\nu_{\eta})$, then
\begin{equation}\label{support-eta-cpp}
\supp(V)=\overline{\left\{ \sum_{j=1}^n \left(\prod_{k=1}^j a_k \right) b_j,\  a_k>0,\ln a_k \in \Xi, b_j\in \supp(\nu_{\eta}), n\in \NN_0 \right\}},
\end{equation}
where
\begin{equation} \label{def-Xi}
\Xi=\supp(-\xi_T) \quad \text{with} \quad T \overset{d}=  \mathrm{Exp}(\nu_{\eta}(\RR)), \text{ independent of }\xi.
\end{equation}
In particular, if $-\xi$ is a subordinator with~${0\in\supp(\nu_{\xi})}$ or nonzero drift, we have that
\begin{align*}
\supp(V)=\begin{cases}\{0\}\cup [\inf \supp(\nu_{\eta}), \infty), &\text{if}\ \eta\ \text{is\ a\ subordinator},\\
			(-\infty, \sup \supp(\nu_{\eta})]\cup \{0\}, &\text{if}\ -\eta\ \text{is\ a\ subordinator},\\
\RR,\ &\text{otherwise},\end{cases}
\end{align*}
and if $\xi\equiv0$, then $\supp(V) = \overline{ \left\{ \sum_{j=1}^n b_j : b_j \in \supp(\nu_\eta), n\in \bN_0\right\}}$.\\
In the remaining cases, except when $\xi$ is a compound Poisson process with~${0\notin\supp(\nu_{\xi})}$ and only negative jumps,~\eqref{support-eta-cpp} simplifies to
\begin{align*}
\supp(V)=\begin{cases}[0,\infty),\ &\text{if}\ \eta\ \text{is\ a\ subordinator}, \\
(-\infty,0],\ &\text{if}\ -\eta\ \text{is\ a\ subordinator},\\
\RR,\ &\text{otherwise}.\end{cases}
\end{align*}
\end{itemize}
\end{theorem}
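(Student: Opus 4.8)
The plan is to exploit the representation from Theorem~\ref{t-sdewithkilling}, namely $V = V_{q,\xi,\eta} \stackrel{d}{=} \re^{-\xi_t}\one_{\{N(t)=0\}}V' + \re^{-\xi_t}\int_{T(t)+}^t \re^{\xi_{s-}}\di\eta_s$ for an independent copy $V'$, together with the elementary decomposition obtained by conditioning on the first killing jump. Writing $T\overset{d}= \mathrm{Exp}(q)$ (or more naturally, when $\eta$ is compound Poisson, splitting at the first jump of $N$ or of $\eta$), one gets a self-similar-type fixed point equation: $\supp(V)$ is the closure of the set of values reachable by finitely many ``one-step'' maps $x \mapsto a x + b$, where $a = \re^{-(\xi_{s}-\xi_{s'})}$ ranges over $\exp(-\supp(\xi))$ restricted to an independent exponential horizon and $b$ is an increment of $\eta$ over that horizon. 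Concretely, the first step is to establish the general recursive identity $\supp(V) = \overline{\bigcup_{n\ge 0}\{a_1(\cdots(a_n\cdot 0 + b_n)\cdots) + b_1\}}$ and to identify the one-step support $\{ax+b\}$ in terms of $\Xi$ from~\eqref{def-Xi} and $\supp(\nu_\eta)$; this is essentially a transfer of the argument in \cite[Thm.~1]{BehmeLindnerMaejima2016} to the killed setting, and case~(v) is exactly this identity made explicit.

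For parts (i)–(iv), the strategy is to analyze this reachable set. Part (i) is immediate. For (ii) (deterministic $\eta$), the integral $\int_0^\tau \re^{-\gamma_\xi^0 s}\gamma_\eta^0\,\di s$ is monotone in $\tau$ and in the path of $\xi$, so the support is a half-line or bounded interval depending on whether $\xi$ pushes $\re^{-\xi_s}$ toward $0$ or can keep it bounded below; this recovers the Wiener–Hopf statement of \cite[Thm.~2.4(2)]{PatieSavov} by a direct path argument. For (iii), one shows that under any of the stated hypotheses the one-step map can produce increments $b$ of both signs arbitrarily close to $0$ (and, by iterating with the multiplicative factor $a$ near $1$, also small perturbations of any already-reachable point), so the reachable set is dense in $\RR$; the infinite-variation case (a) and the ``$0\in\supp(\nu_\eta)$'' cases use that arbitrarily small jumps of either sign occur before the killing time with positive probability, while case (c) uses the nonzero drift to move continuously plus jumps of both signs to reverse direction. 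For (iv), spectral one-sidedness of $\eta$ forces the increment $b$ to have a fixed sign, so starting from $0$ the reachable set lies in a half-line; one then checks the endpoint: if $\xi$ is a subordinator with positive drift the factor $a$ stays in $(0,1]$ and the integral is bounded by $\gamma_\eta^0/\gamma_\xi^0$ (giving the truncated interval), otherwise $\re^{-\xi_s}$ can be made arbitrarily large on a short interval before killing, so the half-line is attained; the subordinator-$\eta$ subcase is where the interval reaches all the way to $\infty$.

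The main obstacle I expect is proving the reverse inclusion (that the claimed sets are actually contained in $\supp(V)$) in the borderline cases of (iii) and (iv) — i.e. showing the reachable set is genuinely \emph{dense} in $\RR$ (resp. in the half-line) rather than merely contained in it. This requires quantitative control: one must show that for a target $y$ and $\epsilon>0$ there is an event of positive probability on which $\xi$ stays in a prescribed small tube and $\eta$ accumulates an increment within $\epsilon$ of a prescribed value before the exponential killing time $\tau$ — and the killing complicates this because $\tau$ may be small, so one cannot simply run the processes for a long time. The remedy is to iterate the one-step construction: even with $\tau$ small, conditioning on $\{N(t)=0\}$ for small $t$ lets the multiplicative factor $a = \re^{-\xi_t}$ be close to $1$ while an additional genuine jump of $\eta$ of a desired size (possible by cases (a)–(c)) lands $b$ near any target value; chaining finitely many such steps, each contributing a small controlled correction, reaches a dense set. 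Making the probabilistic lower bounds on these tube events explicit — particularly handling the interplay of the jump structure of $\xi$, the jump structure of $\eta$, and the independent exponential horizon — is the technical heart, and is presumably why the authors defer these proofs to Section~\ref{S4proofs}.
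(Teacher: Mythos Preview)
Your approach is genuinely different from the paper's, and it is worth understanding why the authors chose theirs.

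You build everything on the random fixed point equation of Theorem~\ref{t-sdewithkilling} and an iteration of one-step affine maps $x\mapsto ax+b$. The paper, by contrast, almost entirely \emph{avoids} this machinery for parts (i)--(iv). Their key device is to condition on the path $\xi=f$ and on $\tau=t$ and use that $\int_0^t \re^{-f(s-)}\,\di\eta_s$ is \emph{infinitely divisible} with an explicitly computable triplet (their Lemma~\ref{lemma-integral-id}); Sato's support theorem for infinitely divisible laws (\cite[Thm.~24.10]{Sato2013}) then gives the support of the conditioned integral directly, with no iterative density argument needed. This handles (iii)(a,b) and the $0\in\supp(\nu_\eta)$ part of (iv) in a few lines. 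For the remaining subcases of (ii), (iii)(c), (iv), they combine two monotonicity lemmas --- $\supp(V_{0,\xi,\eta})\subseteq\supp(V_{q,\xi,\eta})$ (Lemma~\ref{support-killing-from-nokilling}) and stability under removing large jumps (Lemma~\ref{support-killing-from-lessjumps}) --- with the already-known support of the \emph{unkilled} functional from \cite[Thm.~1]{BehmeLindnerMaejima2016}. So the hard density work you anticipate is largely outsourced to the infinitely divisible structure and to the unkilled case.

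Your strategy is not wrong, but the paper's own Remark immediately after the proof (Section~\ref{S4proofs}) is essentially a comment on it: they note that the fixed-point/random-recurrence route via \cite[Thm.~2.5.5]{BuraczewskiDamekMikosch2016} only applies when neither $\xi$ nor $-\xi$ is a subordinator (one needs both $P(0<A<1)>0$ and $P(A>1)>0$), and that the residual subordinator cases would still require the case-by-case analysis they give. Your plan implicitly runs into the same wall in (ii) and (iv), where $\xi$ being a subordinator with positive drift is precisely what produces the bounded endpoints $\gamma_\eta^0/\gamma_\xi^0$; there the iteration has $a\in(0,1]$ always, and your ``tube event'' density argument for the reverse inclusion would need a separate mechanism. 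There is also a looseness in your treatment of (iii)(a): when $\eta$ is of infinite variation there is no clean decomposition into a multiplicative factor and an additive ``increment $b$'' of $\eta$ over a short interval, so the one-step affine picture does not apply literally; the paper's conditioning-plus-infinite-divisibility argument sidesteps this entirely.

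For part (v) your plan and the paper's proof coincide: both write $V$ as $\sum_{j=1}^{M_\tau}\bigl(\prod_{k=1}^j A_k\bigr)B_j$ with $A_k=\re^{-(\xi_{T_k}-\xi_{T_{k-1}})}$, and then read off \eqref{support-eta-cpp} by conditioning on $M_\tau=n$.
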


After presenting the general result for the support of the killed exponential functional in Theorem~\ref{thm-support}, we will now study the case where both $\xi$ and $\eta$ are compound Poisson processes more closely. Consider the following two motivating examples.
\begin{example}\rm
	Let $M$ and $N$ be two independent Poisson processes. Then $\int_0^t 2^{M_{s-}} \, dN_s$,~${t>0}$, and $\int_0^{\tau} 2^{M_{s-}} \, dN_s$ have support $\NN_0$. More generally, we have for $a>1$ that
\begin{displaymath}
\supp\Big(\int_0^\tau a^{M_{s-}} \, dN_s\Big)=\Big\{ \sum_{k=0}^N  n_k a^k,\ N,n_k \in \NN_0\Big\},
\end{displaymath}
which is neither an interval nor the union of an interval and $\{0\}$.
\end{example}

\begin{example}\rm
Let $\eta$ be a Poisson process and let $\xi$ be a compound Poisson process whose L\'{e}vy measure is supported on the set $-S=-1-C$, where $C$ denotes the classical middle third Cantor set. Thus, both $\supp(\nu_{\eta})=\{1\}$ and $\supp(\nu_{\xi})$ are bounded away from zero and do not contain an interval. Further, $\nu_{\xi}(\RR_+)=0$. By~\cite[Cor.~3.4]{CabrelliHareMolter2002}, we have that~${C+C=[0,2]}$ such that in particular $[2,4]\subseteq\Xi. $ By iteration we further see
that $\Xi=\{0\}\cup S\cup[2,\infty)$ from which we derive by \eqref{support-eta-cpp} that
$$\supp(V)=\overline{\left\{ \sum_{j=1}^n c_j,\, c_j \in \{1\}\cup \re^S \cup [\re^2,\infty), n\in\NN_0  \right\}}$$
as $\eta$ always jumps by $1$. In particular $\supp(V)$ contains the unbounded interval $[\re^2,\infty)$.
	\end{example}

The following proposition collects sufficient conditions for $\supp(V)$ to contain an unbounded interval for $\xi$ and $\eta$ being compound Poisson processes. In particular we will see that if $\supp(\nu_{\xi})$ or $\supp(\nu_{\eta})$ contains an interval, then $\supp(V)$ contains an unbounded interval. Recalling the results of Theorem~\ref{thm-support}, it suffices to consider the case when $0\notin\supp(\nu_{\xi})$ and~$0\notin\supp(\nu_{\eta})$, respectively, as well as $\nu_{\xi}(\RR_+)=0$. Denote by~$\lfloor x\rfloor=\max\{z\in\ZZ: z\leq x\}$ the floor function of $x\in \bR$.

\begin{proposition}\label{thm-support2}
Assume $\xi$ and $\eta$ are independent compound Poisson processes such that $0\not\in\supp(\nu_{\eta})$, $0\not\in \supp(\nu_{\xi})$, and $\nu_{\xi}(\RR_+)=0$. Recall $\supp(\eta_\tau) = \overline{\left\{ \sum_{j=1}^n b_j : b_j \in \supp (\nu_\eta), n\in \bN_0 \right\} }$ from Theorem \ref{thm-support}~(v). 	
			\begin{enumerate}
				\item[(i)] Assume there are $\beta<\alpha<0$ such that $[\beta,\alpha]\subseteq \supp( \nu_\xi)$ and set $k:=\lfloor \frac{\alpha}{\beta-\alpha} \rfloor +1$, then
				 $$\supp(V)\supseteq \begin{cases}
\supp (\eta_\tau)			
\cup [\re^{-k\alpha} \inf \supp(\nu_{\eta}), \infty), & \text{if } \eta \text{ is a subordinator},\\
	\supp(\eta_\tau)			 	\cup (-\infty, \re^{-k\alpha} \sup \supp(\nu_{\eta})], & \text{if } -\eta \text{ is a subordinator},\\
				 \RR & \text{otherwise.} \end{cases}$$
				\item[(ii)] Assume $\eta$ is a subordinator and there are $0<\alpha<\beta$ such that $[\alpha,\beta]\subseteq\supp(\nu_{\eta})$. Set~$k:=\lfloor \frac{\alpha}{\beta-\alpha} \rfloor +1$, then
				$$\supp(V)\supseteq \begin{cases}
				\{0\}\cup [\alpha,\infty) ,& \text{if }  \ln (\frac{\beta}{\alpha}) \geq - \sup \supp(\nu_\xi), \\
				\{0\}\cup \bigcup_{\ell=1}^{k-1}[\ell \alpha,\ell \beta]\cup [k\alpha,\infty), & \text{otherwise.}
				\end{cases}$$
				\item[(iii)] Assume $-\eta$ is a subordinator and there are $\beta<\alpha<0$ such that $[\beta,\alpha]\subseteq\supp(\nu_{\eta})$. Set $k:=\lfloor \frac{\alpha}{\beta-\alpha} \rfloor +1$, then
				$$\supp(V)\supseteq \begin{cases}
				(-\infty,\alpha]\cup \{0\} ,& \text{if }  \ln (\frac{\beta}{\alpha}) \geq - \sup \supp(\nu_\xi), \\
				(-\infty,k\alpha] \cup \bigcup_{\ell=1}^{k-1}[\ell \beta,\ell \alpha]\cup   \{0\}, & \text{otherwise.}
				\end{cases}$$
				\item[(iv)] Assume $\nu_{\eta}(\RR_-)\neq 0 \neq \nu_{\eta}(\RR_+)$, and there are $0<\alpha<\beta$ such that ${[\alpha,\beta]\subseteq\supp(\nu_{\eta})}$ or $\beta<\alpha<0$ such that $[\beta,\alpha]\subseteq\supp(\nu_{\eta})$. Then
				$$\supp(V)=\RR.$$
\item[(v)] Assume $\nu_{\eta}(\RR_-)\neq 0 \neq \nu_{\eta}(\RR_+)$ and that there are numbers $z_1<0$, $z_2>0$ in $\supp(\nu_{\eta})$ such that~$\frac{z_2}{z_1}$ is irrational. Then
				$$\supp(V)=\RR.$$
\end{enumerate}
\end{proposition}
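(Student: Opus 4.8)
The plan is to build on the characterisation \eqref{support-eta-cpp} together with parts (i)--(iv) of Proposition~\ref{thm-support2}, which have already settled the case where $\supp(\nu_\eta)$ contains an interval. The statement of part~(v) does not assume that $\supp(\nu_\eta)$ contains an interval, only that it contains two points $z_1<0<z_2$ with $z_2/z_1$ irrational; nonetheless the argument should be entirely self-contained from \eqref{support-eta-cpp}. First I would record what \eqref{support-eta-cpp} gives in the present situation: since $0\notin\supp(\nu_\xi)$ and $\nu_\xi(\RR_+)=0$, the set $\Xi=\supp(-\xi_T)$ contains $0$ (as $\PP(T<\text{first jump of }\xi)>0$) and is a closed additive-type set built from $\supp(-\nu_\xi)\subseteq\RR_+$; in particular $\Xi\supseteq\{0\}$ and $\Xi$ contains points arbitrarily close to $0$ from the right is \emph{not} automatic, so I would instead just use that $\Xi\ni 0$, giving via \eqref{support-eta-cpp} the inclusion $\supp(V)\supseteq\overline{\{\sum_{j=1}^n b_j: b_j\in\supp(\nu_\eta), n\in\NN_0\}}=\supp(\eta_\tau)$, and more generally that for any choice of exponents $\ln a_k\in\Xi$ the weighted sums lie in $\supp(V)$.

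The key step is to produce, using only the two atoms $z_1,z_2$ of $\supp(\nu_\eta)$ and the available scalings $e^{c}$ with $c\in\Xi$, a subset of $\supp(V)$ that is dense in $\RR$; since $\supp(V)$ is closed this forces $\supp(V)=\RR$. I would argue in two stages. Stage one: even with $\Xi=\{0\}$ the set $\{m z_1 + n z_2 : m,n\in\NN_0\}$ already lies in $\supp(V)$, and because $z_2/z_1$ is irrational, the additive semigroup generated by $z_1$ and $z_2$ is dense in $\RR$ --- this is the classical equidistribution fact that $\{m\,|z_1| \bmod |z_2| : m\in\NN_0\}$ is dense in $[0,|z_2|)$, so $\{m z_1 + n z_2\}$ accumulates everywhere on the real line. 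Hence $\supp(V)$, being closed and containing this dense set, is all of $\RR$. So in fact the irrationality hypothesis alone, through the $q=0$-type contribution $\eta_\tau$, already does the job, and the presence of $\xi$ only enlarges things. I would still need to check carefully that $\{m z_1+ n z_2 : m,n\in\NN_0\}$ is genuinely dense (not merely that the group $z_1\ZZ + z_2\ZZ$ is dense): for this, pick the sequence $m_k |z_1| \to c \pmod{|z_2|}$ for the desired fractional part, and add a large multiple of $z_2$ (if $z_2>0$) or of $-z_1$ as needed to reach any prescribed real target up to small error; the sign freedom $z_1<0<z_2$ is exactly what makes both directions on the line reachable.

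The main obstacle I anticipate is the density argument for the numerical semigroup: one must be slightly careful that nonnegative integer combinations (not arbitrary integer combinations) of an irrational pair are dense in all of $\RR$, and spell out why both $+\infty$ and $-\infty$ directions are covered --- this uses $z_1<0$ and $z_2>0$ and is the reason the hypothesis $\nu_\eta(\RR_-)\neq 0\neq\nu_\eta(\RR_+)$ is imposed. A clean way is: fix $x\in\RR$ and $\delta>0$; by irrationality of $z_2/z_1$ choose $m\in\NN$ with the fractional part of $m|z_1|/|z_2|$ within $\delta/|z_2|$ of the fractional part of $(x - 0)/(-z_2)$ appropriately shifted, then compensate with an integer multiple $n\geq 0$ of $z_2$; enlarging $m$ by multiples of the period keeps the fractional part controlled while moving the value, so one can always land within $\delta$ of $x$ with $m,n\geq 0$. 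Once this is in place, the closedness of $\supp(V)$ and the inclusion $\supp(\eta_\tau)\subseteq\supp(V)$ from \eqref{support-eta-cpp} finish the proof immediately; no properties of $\xi$ beyond $\Xi\ni 0$ are needed. I would also remark that this recovers and refines part~(iv) in the irrational case and complements it in the rational case, where instead one genuinely needs an interval in $\supp(\nu_\eta)$ as in~(iv).
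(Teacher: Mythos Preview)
Your proposal addresses only part~(v) and treats parts~(i)--(iv) as already established, but they are part of the same proposition and still require proof. The paper proves (i) by showing that the interval $[\beta,\alpha]\subseteq\supp(\nu_\xi)$ forces $[-n\alpha,-n\beta]\subseteq\Xi$ for all $n\in\NN$, and that these intervals overlap once $n\geq k$, whence $[-k\alpha,\infty)\subseteq\Xi$; the claimed inclusion then follows from \eqref{support-eta-cpp} by taking $n=1$. Part~(ii) uses first that $0\in\Xi$ to get $\{0\}\cup\bigcup_{\ell\geq 1}[\ell\alpha,\ell\beta]\subseteq\supp(V)$, and then, under the extra hypothesis $\ln(\beta/\alpha)\geq -\sup\supp(\nu_\xi)$, multiplies $[\alpha,\beta]$ by the powers $\re^{-m\sup\supp(\nu_\xi)}\in\re^\Xi$ to fill in the gaps. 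Part~(iii) is symmetric to~(ii), and (iv) is derived from~(v), as you note in your final remark. None of these steps is deep, but your outline does not sketch them.

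For part~(v) itself your approach is essentially identical to the paper's. Both arguments rest on $0\in\Xi$ (since $\xi$, being compound Poisson, has positive probability of no jump before $T$), which via \eqref{support-eta-cpp} gives $\supp(V)\supseteq\overline{\{mz_1+nz_2:m,n\in\NN_0\}}$, and then both use irrationality of $z_2/z_1$ together with the sign condition $z_1<0<z_2$ to show this semigroup is dense in $\RR$. The paper normalises $z_1=-1$ and phrases density through the irrational circle rotation on $[0,1)$; your fractional-part argument is the same device in different words. Your anticipated ``main obstacle''---that one needs nonnegative rather than arbitrary integer coefficients---is exactly the point the paper handles by the translation argument, and your sketch of how to achieve $m,n\geq0$ is correct.
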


\begin{example}\rm
Let $\xi$ be a Poisson process and let $\eta$ be a compound Poisson process with L\'{e}vy measure $\nu_{\eta}=\delta_{-1}+\delta_{\sqrt{2}}$, where $\delta_x$ denotes the Dirac measure supported at $x\in\RR$. Thus, both $\supp(\nu_{\xi})=\{1\}$ and $\supp(\nu_{\eta})=\{-1,\sqrt{2}\}$ are bounded away from zero and do not contain an interval. However, $\smash{\frac{\sqrt{2}}{-1}}\in\RR\setminus\QQ$ and thus $\supp(V)=\RR$ by Part~(v) of Proposition~\ref{thm-support2}.
\end{example}


\section{Continuity of (killed) exponential functionals} \label{S5neu}

In this section we present conditions for (absolute) continuity of killed exponential functionals $V_{q,\xi,\eta}$ as well as of exponential functionals $V_{0,\xi,\eta}$ without killing. We start with a collection of sufficient conditions for absolute continuity of killed exponential functionals given in Theorem \ref{t-ac3}. Exponential functionals without killing will be considered likewise in Theorem \ref{t-ac5}. The proofs of the results of this section will be given in Section \ref{S5newproofs}.

\medskip
As before, throughout this section, $\xi$ and $\eta$ denote independent L\'evy processes with characteristic triplets $(\sigma_\xi^2, \nu_\xi,\gamma_\xi)$ and  $(\sigma_\eta^2, \nu_\eta,\gamma_\eta)$, respectively, and characteristic exponents $\Psi_\xi$ and $\Psi_\eta$, respectively, independent of $\tau \overset{d}= \text{Exp}(q)$ for $q\in (0,\infty)$.
The Lebesgue decompositions of $\nu_\xi$ and $\nu_\eta$ will be denoted by
	$$\nu_\xi = \nu_{\xi,{\rm{ac}}} + \nu_{\xi,{\rm{sing}}} \quad \mbox{and} \quad
	\nu_\eta = \nu_{\eta,{\rm{ac}}} + \nu_{\eta,{\rm{sing}}},$$
respectively, where \lq\lq ac\rq\rq~marks the absolutely continuous and \lq\lq sing\rq\rq~the singular part. Further, $\Re(z)$ denotes the real part of a complex number $z$. Lastly, for each $p\in [0,\infty)$, the \emph{$p$-potential measure $W_\eta^p$ of $\eta$} is defined by
\begin{equation} \label{eq-potential}
		W_\eta^p (B) = \int_0^\infty \re^{-p t} P(\eta_t \in B) \, \di t= E \left( \int_0^\infty \re^{-pt} \one_B(\eta_t) \, \di t \right), \quad B \in \cB_1,
\end{equation}
e.g., \cite[Def. 30.9]{Sato2013}; in other literature such as \cite{Bertoin1996} this appears also under the name of \emph{resolvent kernel} when $p>0$. Likewise, the potential measure of $\xi$ will be denoted as $W_\xi^p$.

\subsection{(Absolute) continuity of the killed exponential functional}

\begin{theorem}[Sufficient conditions for absolute continuity of $V_{q,\xi,\eta}$]\label{t-ac3}
 Suppose that one of the following conditions  is satisfied:
	\begin{enumerate}
		\item[(i)] The characteristic triplet of $\eta$ satisfies Kallenberg's condition (\cite[pp. 794--795]{Kallenberg1981})
		\begin{equation} \label{eq-Kallenberg5}
				\lim_{\varepsilon \downarrow 0} \varepsilon^{-2} |\ln \varepsilon|^{-1}\left( \sigma^2 + \int_{-\varepsilon}^{\varepsilon} x^2 \, \nu (\di x)\right) = \infty,
		\end{equation}
	 or more generally the Hartman--Wintner condition (\cite[pp. 287--288]{HartmanWintner1942})
	 \begin{equation} \label{eq-HW3}
	 		\lim_{|z|\to\infty} \frac{-\Re (\Psi(z))}{\ln (1+|z|)}=\infty.
	 \end{equation}
		In particular, this is satisfied when $\sigma_\eta^2 > 0$.
		\item[(ii)] The absolutely continuous part of $\nu_\eta$ is infinite: $\nu_{\eta,{\rm{ac}}} (\bR) = \infty$.
		\item[(iii)] The characteristic exponent $\Psi_\xi$ of $\xi$ satisfies
		\begin{equation} \label{eq-Hawkes}
				\int_\bR \Re \left( \frac{1}{1-\Psi_\xi (z)} \right) \, \di z < \infty,
		\end{equation}
	and ${\nu_\eta(\bR) = \infty}$.
		\item[(iv)] $\nu_{\xi,{\rm ac}}(\bR) = \nu_\eta(\bR) = \infty$.
		\item[(v)] $\eta$ is of finite variation with  non-zero drift.
		\item[(vi)] $\xi$ is a compound Poisson process and $\eta$ satisfies the ACP condition:
		\begin{equation} \label{eq-ACP}
				W_\eta^p\text{ is absolutely continuous for some } p\in [0,\infty).
		\end{equation}
	\end{enumerate}
	Then $V_{q,\xi,\eta} = \int_0^\tau \re^{-\xi_{s-}} \, d\eta_s$ is absolutely continuous.
\end{theorem}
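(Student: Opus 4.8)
The plan is to reduce each case to a known absolute continuity criterion by conditioning suitably, exploiting that for fixed realisations the integral $\int_0^t \re^{-\xi_{s-}}\,\di\eta_s$ (or $\int_0^\tau\re^{-\xi_{s-}}\,\di\eta_s$) is an infinitely divisible law whose triplet is given explicitly by \eqref{eq-LM7}--\eqref{eq-LM5}. The overarching principle is: if a random variable $Y$ can be written as $Y = Z + Y'$ where $Z$ is conditionally (on some $\sigma$-field $\mathcal G$) absolutely continuous, $Y'$ is $\mathcal G$-measurable, and $Z,Y'$ are conditionally independent given $\mathcal G$, then $Y$ is absolutely continuous. For the killed functional I would condition jointly on the killing time $\tau$ and, in most cases, on the path of $\xi$; writing $V_{q,\xi,\eta} = \int_0^\tau \re^{-\xi_{s-}}\,\di\eta_s$, given $\{\tau = t,\ \xi = f\}$ this is the Wiener integral $\int_0^t \re^{-f(s-)}\,\di\eta_s$ of a bounded measurable deterministic function against the Lévy process $\eta$, which is infinitely divisible with the triplet from \eqref{eq-LM7}--\eqref{eq-LM5} with $f$ replaced by $\re^{-f(\cdot-)}$.

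For \textbf{(i)}, I would condition on $(\tau,\xi) = (t,f)$ and check that the conditional law, being infinitely divisible, satisfies the Hartman--Wintner condition \eqref{eq-HW3}: using $\Psi_{\re^{-f},t}(z) = \int_0^t \Psi_\eta(\re^{-f(s-)}z)\,\di s$ and the lower bound $\re^{-f(s-)} \geq c > 0$ on the compact path segment, one transfers \eqref{eq-Kallenberg5}/\eqref{eq-HW3} for $\eta$ to the conditional law — since $\sigma^2_{\re^{-f},t} = \sigma_\eta^2\int_0^t \re^{-2f(s-)}\,\di s$ and $\nu_{\re^{-f},t}$ restricted to a small neighbourhood of $0$ is bounded below by a scaled copy of the corresponding restriction of $\nu_\eta$. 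Hartman--Wintner then gives conditional absolute continuity (indeed a bounded continuous density), and integrating out $(\tau,\xi)$ preserves it. For \textbf{(ii)}, \textbf{(iv)} and \textbf{(v)} the same conditioning works because infinite $\nu_{\eta,{\rm ac}}$, or infinite $\nu_{\xi,{\rm ac}}$ together with $\nu_\eta(\bR)=\infty$, or a non-zero drift of $\eta$, all translate directly through \eqref{eq-LM1}/\eqref{eq-LM5} into the conditional triplet having the corresponding property; I expect these to follow from the results on $\int_0^t f(s)\,\di\eta_s$ that Section \ref{S6new} is said to establish, and which I may invoke. For \textbf{(iii)} I would instead condition only on $\tau = t$ (not on $\xi$): then $\int_0^t \re^{-\xi_{s-}}\,\di\eta_s$ is the (unkilled, time-$t$) exponential functional, and condition \eqref{eq-Hawkes} is precisely a Hawkes-type criterion on $\xi$ guaranteeing that the occupation measure of $\xi$ on $[0,t]$ is absolutely continuous, which combined with $\nu_\eta(\bR)=\infty$ yields a jump of $\eta$ at a time whose $\xi$-value is spread out — producing absolute continuity of the conditional law, again preserved under integrating over $\tau$.

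For \textbf{(vi)}, when $\xi$ is compound Poisson the path $\xi$ is piecewise constant, so on the event that the first jump of $\xi$ occurs after time $\tau$ — an event of positive probability — we have $\re^{-\xi_{s-}} \equiv 1$ on $[0,\tau)$, hence $\int_0^\tau \re^{-\xi_{s-}}\,\di\eta_s = \eta_\tau$ on that event, whose law is $q\,W_\eta^q$ up to normalisation (cf. the introduction's identification of $\law(V_{q,0,\eta})$ with $q$ times the potential measure of $\eta$). Thus the ACP condition with $p = q$ gives absolute continuity of this piece. To handle the complement and the mixing, I would decompose according to the (geometrically distributed) number of $\xi$-jumps before $\tau$ and their locations, and on each such event argue that the contribution of the final inter-jump interval — again an $\eta_{\text{(interval length)}}$ term scaled by a constant $\re^{-\xi_{\text{last}}}$ that is independent of it — is conditionally absolutely continuous using ACP for the appropriate potential measure; the memorylessness of $\tau$ and a change of variables reduce this to the $p=q$ potential measure. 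The device is to isolate one $\eta$-increment over an independent exponential-type length whose law is (a scaling of) a potential measure of $\eta$, and then use that ACP for one value of $p$ propagates to all relevant $p$ (a standard fact, or provable by the resolvent identity).

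\textbf{Main obstacle.} The delicate case is \textbf{(vi)}: unlike the others it does not reduce to a statement purely about $\int_0^t f\,\di\eta$ for deterministic $f$, because the relevant "$f$" is itself a step function with random breakpoints coupled to $\tau$, and the payoff term one wants to isolate is $\eta$ evaluated at a \emph{random} (not deterministic) time. Making the conditional-independence bookkeeping precise — identifying the right conditioning $\sigma$-field so that the isolated increment is conditionally distributed as a scaled potential measure of $\eta$ and genuinely conditionally independent of everything else — and correctly matching the killing parameter $q$ with the potential parameter $p$ (using that ACP for some $p$ forces ACP for all $p \geq 0$, which I would establish via the resolvent equation $W^p_\eta - W^{p'}_\eta = (p'-p)\,W^p_\eta * W^{p'}_\eta$) is where the real work lies. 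The other cases I expect to be routine once the conditioning-on-$(\tau,\xi)$ lemma and the Section \ref{S6new} results on $\int_0^t f\,\di\eta_s$ are in hand.
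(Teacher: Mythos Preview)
Your overall strategy is right for (i), (ii), (iii), and your treatment of (vi), while more elaborate than necessary, would work; the paper handles (vi) more simply by splitting only on whether the \emph{first} jump time $T$ of $\xi$ occurs before or after $\tau$, and on $\{\tau>T\}$ isolates the \emph{initial} increment $\eta_T$ (conditionally independent and absolutely continuous by ACP) rather than the final one.

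There are, however, genuine gaps in your handling of (iv) and (v), both caused by over-conditioning.

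\textbf{Case (iv).} Conditioning on $\xi=f$ discards precisely the randomness you need: once $\xi$ is a fixed deterministic path, the hypothesis $\nu_{\xi,\rm ac}(\bR)=\infty$ plays no role whatsoever in the triplet of $\int_0^t\re^{-f(s-)}\,\di\eta_s$, which depends only on $\eta$. The absolute continuity in (iv) comes from the absolutely continuous \emph{jump sizes} of $\xi$. The paper's argument (in the proof of Theorem~\ref{t-ac2}(iv)) instead writes, for a suitable first jump time $R_\varepsilon$ of $\xi$ with jump size $Y_\varepsilon$ in an absolutely continuous piece of $\nu_\xi$,
\[
\int_0^t \re^{-\xi_{s-}}\,\di\eta_s=\int_0^{R_\varepsilon}\re^{-\xi_{s-}}\,\di\eta_s+\re^{-Y_\varepsilon}\Bigl(\re^{-\xi_{R_\varepsilon-}}\int_{R_\varepsilon+}^t\re^{-(\xi_{s-}-\xi_{R_\varepsilon})}\,\di\eta_s\Bigr),
\]
uses that $\re^{-Y_\varepsilon}$ is absolutely continuous and independent of the rest, and that the multiplying factor is a.s.\ nonzero because $\nu_\eta(\bR)=\infty$ makes the tail integral atomless.

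\textbf{Case (v).} Conditioning on both $\tau=t$ and $\xi=f$ fails outright when $\eta$ is deterministic (the benchmark case for (v)): then $\int_0^t\re^{-f(s-)}\,\di\eta_s=\gamma_\eta^0\int_0^t\re^{-f(s-)}\,\di s$ is a point mass. A nonzero drift in the conditional triplet is just a shift and gives no absolute continuity. The correct move is to condition on $\xi=f$ \emph{only} and keep $\tau$ random: the map $t\mapsto\int_0^t\re^{-f(s-)}\,\di g(s)$ (for each path $g$ of $\eta$) is a.e.\ differentiable with derivative $\gamma_\eta^0\,\re^{-f(t-)}\neq0$, so the image of the absolutely continuous law of $\tau$ under it is absolutely continuous. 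This is exactly Corollary~\ref{c-ac1}(v), which the paper invokes.
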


A discussion of the various assumptions of this theorem, in particular examples for the validity of \eqref{eq-Hawkes} and \eqref{eq-ACP}, will be given in Section \ref{S-discussion}.
When $\eta$ is deterministic but not the zero-process, Pardo et al. \cite[Thm. 2.1]{PardoRiveroSchaik} showed that $V_{q,\xi,\eta}$ has a density and they also obtained various properties of it. Observe that the existence of the density in this case can also be seen from Theorem \ref{t-ac3}~(v).

\begin{remark}\label{rem-ext-new} \rm
	Many of the results of Theorem \ref{t-ac3} can be extended to functionals of the form $\int_0^\tau g(\xi_{s-}) \, \di \eta_s$ for sufficiently nice functions $g$. For example, if $g:[0,\infty) \to \bR$ is continuous, $\xi$ satisfies
	\begin{equation} \label{eq-g-N}
			\int_0^t P(\xi_{s} \in N) \, \di s = 0.
	\end{equation}
		 for the zero set $N$ of $g$, and one of the conditions (i) -- (iii) of Theorem \ref{t-ac3} is satisfied, then also $\int_0^\tau g(\xi_{s-}) \, \, \di \eta_s$ will be absolutely continuous by the same proof.\\	
		 When $N$ is countable,  sufficient conditions for \eqref{eq-g-N} are that $\sigma_\xi^2>0$ or $\nu_\xi(\bR) = \infty$ (by \cite[Thm. 27.4]{Sato2013}), or that $\xi$ is of finite variation with non-zero drift since then \eqref{eq-Hawkes} is satisfied which gives absolute continuity of the potential measure $W^1_\xi$ and hence of $W^0_\xi$. Another sufficient condition obviously is that $g\neq 0$ on $[0,t]$.
\end{remark}

Moreover we highlight the following two corollaries of  Theorem \ref{t-ac3}.

\begin{corollary} \label{c-ac4}
	Assume that $\sigma_\xi^2 > 0$.
	Then $V_{q,\xi,\eta}$ is absolutely continuous if and only if $\eta$ is neither a compound Poisson process nor the zero process.
\end{corollary}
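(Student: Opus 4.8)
\textbf{Proof plan for Corollary \ref{c-ac4}.}
The plan is to treat the two implications separately, deriving the \lq\lq if\rq\rq\ direction from Theorem \ref{t-ac3} and the \lq\lq only if\rq\rq\ direction from an explicit computation of the atom of $V_{q,\xi,\eta}$ at $0$ when $\eta$ is compound Poisson. For the sufficiency, suppose $\eta$ is neither a compound Poisson process nor the zero process. If $\sigma_\eta^2>0$ we are done by Theorem \ref{t-ac3}~(i). Otherwise $\sigma_\eta^2=0$, and since $\eta$ is not compound Poisson and not the zero process, either $\nu_\eta(\bR)=\infty$ or $\eta$ has a non-zero drift (a compound Poisson process is exactly a finite-variation process with $\nu_\eta(\bR)<\infty$ and zero drift; a process with $\sigma_\eta^2=0$ and $\nu_\eta(\bR)<\infty$ that is not compound Poisson must have non-zero drift, while if $\nu_\eta(\bR)=\infty$ the process is never compound Poisson). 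In the non-zero-drift case Theorem \ref{t-ac3}~(v) applies. In the case $\nu_\eta(\bR)=\infty$ we invoke Theorem \ref{t-ac3}~(iii): since $\sigma_\xi^2>0$ we have $\Re(\Psi_\xi(z))\le -\sigma_\xi^2 z^2/2$, so $\Re\bigl(1/(1-\Psi_\xi(z))\bigr) = (1-\Re\Psi_\xi(z))/|1-\Psi_\xi(z)|^2 \le (1-\Re\Psi_\xi(z))^{-1} \le (1+\sigma_\xi^2 z^2/2)^{-1}$, which is integrable over $\bR$; hence \eqref{eq-Hawkes} holds and $\nu_\eta(\bR)=\infty$ gives absolute continuity. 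This exhausts all cases.

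For the necessity, suppose $\eta$ is the zero process or a compound Poisson process; I will show $V_{q,\xi,\eta}$ is not absolutely continuous by exhibiting an atom at $0$. If $\eta\equiv0$ then $V=0$ a.s., which is purely atomic. If $\eta$ is compound Poisson with $\nu_\eta(\bR)=:c\in(0,\infty)$, let $J_1$ be the time of the first jump of $\eta$, an $\mathrm{Exp}(c)$ random variable independent of $(\xi,\tau)$. On the event $\{\tau< J_1\}$ the process $\eta$ is identically zero on $[0,\tau]$, hence $V_{q,\xi,\eta}=\int_0^\tau \re^{-\xi_{s-}}\,\di\eta_s = 0$. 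Therefore
\begin{equation*}
  \PP(V_{q,\xi,\eta}=0) \;\ge\; \PP(\tau<J_1) \;=\; \frac{q}{q+c} \;>\;0,
\end{equation*}
using independence of $\tau\overset{d}=\mathrm{Exp}(q)$ and $J_1\overset{d}=\mathrm{Exp}(c)$. Thus $\law(V_{q,\xi,\eta})$ has an atom at $0$ and cannot be absolutely continuous. This proves the contrapositive of the \lq\lq only if\rq\rq\ direction, and note that this direction does not even use the hypothesis $\sigma_\xi^2>0$.

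The only mildly delicate point is the bookkeeping in the sufficiency proof: one must be careful that the dichotomy \lq\lq compound Poisson or zero process\rq\rq\ is the exact complement, within all L\'evy processes with $\sigma_\eta^2=0$, of the union of the cases \lq\lq finite variation with non-zero drift\rq\rq\ and \lq\lq $\nu_\eta(\bR)=\infty$\rq\rq, together with the observation that the case $\sigma_\eta^2>0$ is handled separately by part~(i). No real obstacle arises; the main content is simply organizing the case distinction and verifying the elementary integrability estimate for \eqref{eq-Hawkes}, which follows immediately from the Gaussian term dominating $\Re(\Psi_\xi(z))$ as $|z|\to\infty$.
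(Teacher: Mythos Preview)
Your proof is correct and follows essentially the same approach as the paper: the same three-case split in the sufficiency direction via Theorem~\ref{t-ac3}~(i), (iii), (v), and the same atom-at-zero argument for necessity. You verify condition~\eqref{eq-Hawkes} directly from the Gaussian term where the paper instead cites Example~\ref{ex-Hawkes}, but this is a cosmetic difference only.
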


When $\xi$ is a compound Poisson process, it is easy to show that the sufficient condition~(vi) of Theorem \ref{t-ac3} is actually also necessary:

\begin{corollary} \label{c-ac7}
	Let $\xi$ be a compound Poisson process.
	 Then $V_{q,\xi,\eta}$ is absolutely continuous if and only if $\eta$ satisfies the ACP condition \eqref{eq-ACP}.
\end{corollary}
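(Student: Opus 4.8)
The plan is to prove the nontrivial direction: if $\xi$ is a compound Poisson process and $V_{q,\xi,\eta}$ is absolutely continuous, then $\eta$ satisfies the ACP condition \eqref{eq-ACP}. (The converse is exactly Theorem \ref{t-ac3}(vi).) The idea is to condition on the structure of the compound Poisson process $\xi$ near time $0$ and use the self-decomposable-type structure that $V_{q,\xi,\eta}$ inherits from the first jump time of $\xi$.

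First I would recall that a compound Poisson process $\xi$ with jump rate $a := \nu_\xi(\bR) \in (0,\infty)$ is, up to time of its first jump $T_1 \overset{d}{=} \mathrm{Exp}(a)$, identically zero, so that on $\{T_1 > s\}$ we have $\xi_{s-} = 0$. Decomposing the killed integral $V_{q,\xi,\eta} = \int_0^\tau \re^{-\xi_{s-}}\,\di\eta_s$ according to whether the first jump of $\xi$ occurs before or after $\tau$, and using independence of $\xi$, $\eta$, $\tau$: on $\{T_1 > \tau\}$ we simply get $\int_0^\tau \di\eta_s = \eta_\tau$, while on $\{T_1 \le \tau\}$ we get $\eta_{T_1} + \re^{-\Delta\xi_{T_1}}\int_{T_1}^\tau \re^{-(\xi_{s-}-\xi_{T_1})}\,\di\eta_{s}$; by the strong Markov property and the lack-of-memory property of $\tau$, the last integral is an independent copy of $V_{q,\xi,\eta}$, and $\eta_{T_1}$ is independent of it as well. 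Noting that $T_1 \wedge \tau \overset{d}{=} \mathrm{Exp}(a+q)$ and that, conditionally on $T_1 \wedge \tau = t$, the increment $\eta_{t}$ has the law of $\eta$ at an independent $\mathrm{Exp}(a+q)$-time (which is $(a+q)$ times the $(a+q)$-potential measure $W_\eta^{a+q}$, cf.\ \cite[Def.\ 30.10]{Sato2013}), the decomposition reads, for some independent random sign/scaling factor $R := \re^{-\Delta\xi_{T_1}}\one_{\{T_1\le\tau\}}$ and an independent copy $V'$ of $V_{q,\xi,\eta}$,
\begin{equation} \label{eq-cpp-decomp}
V_{q,\xi,\eta} \overset{d}{=} \eta_{T_1\wedge\tau} + R\, V',
\end{equation}
where $\eta_{T_1\wedge\tau}$ has law proportional to $W_\eta^{a+q}$ and is independent of $(R,V')$.

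From \eqref{eq-cpp-decomp} I would argue as follows. If $V_{q,\xi,\eta}$ were \emph{not} absolutely continuous — more precisely, if its law had a nonzero singular part — one still needs the reverse implication, so instead I assume $V_{q,\xi,\eta}$ \emph{is} absolutely continuous and deduce that $\law(\eta_{T_1\wedge\tau})$ is absolutely continuous. The key point is that in \eqref{eq-cpp-decomp} the summand $\eta_{T_1\wedge\tau}$ is independent of $R V'$, so $\law(V_{q,\xi,\eta}) = \law(\eta_{T_1\wedge\tau}) * \law(R V')$; a convolution is absolutely continuous as soon as \emph{one} factor is, but the converse fails in general, so a direct "deconvolution" argument does not work. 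The fix is to iterate: applying \eqref{eq-cpp-decomp} repeatedly expresses $V_{q,\xi,\eta}$ as the sum of an independent (geometrically long) sequence of scaled copies of $\eta_{T_1\wedge\tau}$, and one uses a $0$--$1$-type dichotomy (in the spirit of the Jessen--Wintner / Lévy law for independent sums and of the arguments for self-decomposable laws, cf.\ \cite[Thm.\ 27.15]{Sato2013}) to conclude that $\law(V_{q,\xi,\eta})$ is absolutely continuous \emph{only if} the building block $\law(\eta_{T_1\wedge\tau})$ — equivalently $W_\eta^{a+q}$ — is absolutely continuous. Since $W_\eta^{a+q}$ absolutely continuous implies $W_\eta^p$ absolutely continuous for all $p \ge a+q$ (the $p$-potential measures are all mutually absolutely continuous for $p>0$, as $W_\eta^{p}(\di x) = W_\eta^{p'}(\di x)$-a.e.\ comparable via $\re^{-(p-p')t}$), the ACP condition \eqref{eq-ACP} holds.

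The main obstacle is precisely this last dichotomy step: converting "the infinite independent convolution $\law(V_{q,\xi,\eta}) = *_{n\ge 1}\law(R_1\cdots R_{n-1}\,\eta^{(n)}_{T_1\wedge\tau})$ is absolutely continuous" into "$W_\eta^{a+q}$ is absolutely continuous". I expect to handle it by contraposition: assume $W_\eta^{a+q}$ is not absolutely continuous; then its law, being infinitely divisible only in a weak sense here, must have either a nontrivial atom at $0$ (if $\nu_\eta$ is finite and $\eta$ has zero drift and no Gaussian part, i.e.\ $\eta$ is a compound Poisson process — the case where one genuinely gets non-absolute-continuity) or a nontrivial continuous-singular part. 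In the compound-Poisson-$\eta$ case, $V_{q,\xi,\eta}$ is supported on a countable set by Theorem \ref{thm-support}(v) (it is a closure of a countable set, and a short argument shows it is actually carried by that countable set when $0\notin\supp(\nu_\eta)$, or more directly one checks $\PP(V_{q,\xi,\eta}\in\text{countable set})$ issues), hence not absolutely continuous; in the continuous-singular case one transfers the singularity through the convolution using the fact that the scaling factors $R_n$ take values in a fixed bounded set and a Borel--Cantelli / support argument shows the singular mass cannot be smeared out. I would lift the precise mechanics from the proof of Theorem \ref{t-ac3}(vi) (run in reverse) together with the standard facts on potential measures of Lévy processes in \cite{Sato2013}.
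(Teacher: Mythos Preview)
Your setup is fine, but you have created an obstacle where there is none. You already observe that on the event $\{T_1>\tau\}$ one has $V_{q,\xi,\eta}=\eta_\tau$ \emph{exactly}, not merely as a summand in a convolution. Since $\{T_1>\tau\}$ has positive probability $q/(q+a)$, and since the conditional law of an absolutely continuous random variable given any event of positive probability is again absolutely continuous (because $P(V\in B\mid A)\le P(V\in B)/P(A)=0$ for every Lebesgue null set $B$), it follows immediately that $\law(\eta_\tau\mid T_1>\tau)$ is absolutely continuous. Conditionally on $\{T_1>\tau\}$, the time $\tau$ is $\mathrm{Exp}(q+a)$-distributed and independent of $\eta$, so this law is $(q+a)W_\eta^{q+a}$, and the ACP condition follows. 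This is precisely the paper's argument, and it is two lines long.

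The ``deconvolution'' problem you flag as the main obstacle, and the iterated decomposition with a Jessen--Wintner-type dichotomy you propose to resolve it, are therefore unnecessary. Worse, the contraposition you sketch has real gaps: invoking Theorem~\ref{thm-support}(v) only gives information about the \emph{support} (a closed set that may well be all of $\bR$), not about the type of the distribution, so it cannot rule out absolute continuity when $\eta$ is compound Poisson; and the vague ``transfer the singularity through the convolution'' step for the continuous-singular case is not an argument. Drop all of this and use the direct conditioning on $\{\tau<T_1\}$ instead.
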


Continuity of $V_{q,\xi,\eta}$ is characterised in the following proposition.

\begin{proposition}[Continuity of $V_{q,\xi,\eta}$] \label{c-ac5}
$V_{q,\xi,\eta}$ is continuous if and only if $\eta$ is neither a compound Poisson process nor the zero process. If $\eta$ is a compound Poisson process or the zero process, then $V_{q,\xi,\eta}$ has an atom at zero.
\end{proposition}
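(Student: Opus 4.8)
The plan is to prove both directions of the equivalence together with the atom claim, reducing everything to the one-jump structure of the killed integral. First I would treat the ``if'' direction: suppose $\eta$ is neither a compound Poisson process nor the zero process. By conditioning on the path $\xi = f$ (which by the discussion in Section~\ref{S2} turns $\int_0^t \re^{-\xi_{s-}}\,\di\eta_s$ into a Wiener-type integral $\int_0^t \re^{-f(s-)}\,\di\eta_s$ with respect to $\eta$), and then conditioning on $\tau = t$, it suffices to show that $\int_0^t \re^{-f(s-)}\,\di\eta_s$ is continuous for Lebesgue-a.e.\ $t>0$ and every fixed c\`adl\`ag $f$. Since $\eta$ is not compound Poisson and not identically zero, either $\sigma_\eta^2>0$, or $\nu_\eta$ is infinite, or $\eta$ has a non-zero drift, or $\nu_\eta$ is a finite nonzero measure with zero drift. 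In each of these cases the infinitely divisible law with triplet $(\sigma_{f,t}^2,\nu_{f,t},\gamma_{f,t})$ given by \eqref{eq-LM7}--\eqref{eq-LM5} is continuous: it is continuous iff it is not of ``compound Poisson plus constant'' type, i.e.\ iff $\sigma_{f,t}^2>0$ or $\nu_{f,t}(\bR)=\infty$, by \cite[Thm.~27.4]{Sato2013}. One checks that $\sigma_\eta^2>0$ forces $\sigma_{f,t}^2>0$ for a.e.\ $t$ (the set where $f=0$ has Lebesgue measure zero or one handles it trivially), and $\nu_\eta(\bR)=\infty$ forces $\nu_{f,t}(\bR)=\infty$ by \eqref{eq-LM1}; the finite-variation non-zero-drift case is handled directly since then $\int_0^t \re^{-f(s-)}\,\di\eta_s$ has a continuous distribution because the drift contributes an absolutely continuous part after integrating the strictly positive weight $\re^{-f(s-)}$ against $\di s$ --- more carefully, one notes that in that case $\eta$ itself is continuous (finite variation with non-zero drift is never compound-Poisson-type) and invokes the same integral representation. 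Integrating the resulting continuity over the distribution of $\tau$ and then over the law of $\xi$ preserves continuity, since a mixture of continuous laws is continuous.

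Next I would treat the ``only if'' direction together with the atom statement. Suppose $\eta$ is a compound Poisson process (possibly the zero process). Let $A_t$ be the event that $\eta$ has no jump on $[0,t]$; since $\eta$ is compound Poisson with finite rate $\lambda := \nu_\eta(\bR) \in [0,\infty)$, we have $P(A_t) = \re^{-\lambda t} > 0$, and on $A_t$ the path of $\eta$ is the linear drift, hence $\eta_s = s\gamma_\eta^0$ for $s\le t$ --- but a compound Poisson process has zero drift, so in fact $\eta_s \equiv 0$ on $A_t$, and therefore $\int_0^t \re^{-\xi_{s-}}\,\di\eta_s = 0$ on $A_t$. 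Intersecting with $\{\tau < t\}$ — no, more directly: on the event $\{$no jump of $\eta$ before time $\tau\}$, which has probability $\int_0^\infty q\re^{-qt}\re^{-\lambda t}\,\di t = q/(q+\lambda) > 0$, we get $V_{q,\xi,\eta} = \int_0^\tau \re^{-\xi_{s-}}\,\di\eta_s = 0$. Hence $P(V_{q,\xi,\eta} = 0) \ge q/(q+\lambda) > 0$, so the law of $V_{q,\xi,\eta}$ has an atom at $0$, and in particular it is not continuous. This simultaneously shows that continuity fails and that the atom sits at zero.

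The main obstacle I anticipate is the ``if'' direction in the degenerate sub-case where $\eta$ is a nonzero finite-variation L\'evy process with zero drift and a finite L\'evy measure but which is \emph{not} compound Poisson: actually this cannot happen — finite variation plus finite L\'evy measure plus zero drift \emph{is} exactly the definition of a compound Poisson process — so the genuine cases to rule out are precisely $\sigma_\eta^2>0$, $\nu_\eta(\bR)=\infty$, or ($\sigma_\eta^2=0$, $\nu_\eta$ finite, drift $\ne 0$). The only real care needed is handling the null set of times $s$ where $f(s)=0$ so that $\re^{-f(s-)}$ does not kill the Gaussian or drift contribution; since $\{s : f(s) = 0\}$ for a fixed c\`adl\`ag $f$ need not have measure zero, one must argue that if it has full measure on $[0,t]$ then $\sigma_{f,t}^2 = 0$ but also the integral is still continuous for a different reason (the infinite L\'evy measure case is unaffected by this, since $\one_{B\setminus\{0\}}(f(s)x)$ integrated over $x$ is $\nu_\eta(\bR) = \infty$ whenever $f(s)\ne 0$, and over the complementary null-of-$f$-zeros set we may still have infinite mass elsewhere) — but when $\sigma_\eta^2>0$ is the \emph{only} source of continuity and $f\equiv 0$ on a set of positive measure, one needs $f \ne 0$ somewhere, which holds for a.e.\ $t$ large enough unless $f\equiv 0$ on all of $[0,\infty)$, i.e.\ $\xi$ escapes to such a path with probability zero unless $\xi$ is degenerate; and if $\xi\equiv 0$ the integral is simply $\eta_\tau$, whose continuity is immediate from $\sigma_\eta^2>0$. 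I would isolate this as a short lemma or simply remark that it reduces to the already-established Theorem~\ref{t-ac3}(i) (Kallenberg/Hartman--Wintner) plus the observation that $V_{q,0,\eta} = \eta_\tau$ inherits continuity from $\eta$.
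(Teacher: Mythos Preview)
Your ``only if'' direction (atom at zero when $\eta$ is compound Poisson or zero) is fine and matches the paper. Your ``if'' direction is also fine in the two cases $\sigma_\eta^2>0$ and $\nu_\eta(\bR)=\infty$: conditioning on $\xi=f$ and $\tau=t$, the integrand $\re^{-f(s-)}$ is strictly positive everywhere, so Proposition~\ref{p-cont1} applies directly. (Your ``main obstacle'' paragraph about the set $\{s:f(s)=0\}$ is a confusion: you are integrating $\re^{-f(s-)}$, not $f$ itself, and $\re^{-f(s-)}$ never vanishes, so there is nothing to worry about there.)

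The genuine gap is in the remaining case: $\sigma_\eta^2=0$, $\nu_\eta(\bR)<\infty$, and $\gamma_\eta^0\neq 0$. Here your strategy of conditioning on \emph{both} $\xi=f$ and $\tau=t$ cannot work. For fixed $f$ and $t$, the random variable $\int_0^t \re^{-f(s-)}\,\di\eta_s$ is infinitely divisible with zero Gaussian part and finite L\'evy measure (of total mass $t\cdot\nu_\eta(\bR)<\infty$), so by \cite[Thm.~27.4]{Sato2013} it \emph{does} have atoms; the drift term $\gamma_\eta^0\int_0^t\re^{-f(s-)}\,\di s$ is just a constant under this conditioning, not ``an absolutely continuous part''. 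Your claim that ``$\eta$ itself is continuous'' in this case is also false: a L\'evy process with $\sigma_\eta^2=0$ and finite $\nu_\eta$ has a discrete marginal at each fixed time regardless of drift. The paper handles this case by \emph{not} conditioning on $\tau$: it invokes Theorem~\ref{t-ac3}(v), whose proof (via Corollary~\ref{c-ac1}(v)) conditions on $\xi=f$ and on the path of $\eta$, then observes that $t\mapsto\int_0^t\re^{-f(s-)}\,\di\eta_s$ has a.e.\ nonzero derivative $\gamma_\eta^0\re^{-f(t-)}$, so the absolute continuity of $\tau$ transfers to $V_{q,\xi,\eta}$. You need some version of this idea---the randomness of $\tau$ (or of $\xi$, when $\sigma_\xi^2>0$ or $\nu_\xi(\bR)=\infty$) is essential here and cannot be conditioned away.
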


A probability law on $\bR$ is said to be of \emph{pure type} if it is either discrete, continuous singular or absolutely continuous. The next example shows that $V_{q,\xi,\eta}$ is not always of pure type (unlike $V_{0,\xi,\eta}$, see Section \ref{S-5.2}).

\begin{example}
	{\rm If $\eta$ is a compound Poisson process, then $V_{q,\xi,\eta}$ has an atom at zero and hence trivially cannot be absolutely continuous. But its distribution restricted to $\bR^*$ can be absolutely continuous, as we show now.
		Denote the time of the first jump of $\eta$ by $R$. Conditional on $\{\tau > R\}$, we can write
		\begin{equation*}
				V_{q,\xi,\eta}  =  \re^{-\xi_R} \, \Delta \eta_R + \int_{R+}^\tau \re^{-\xi_{s-}} \, \di \eta_s
				= \re^{-\xi_R} \left( \Delta \eta_R + \int_{R+}^{R + (\tau - R)} \re^{-(\xi_{s-} - \xi_R)} \, \di \eta_s\right),
		\end{equation*}
		with $\re^{-\xi_R}$, $\Delta \eta_R$ and $\int_{R+}^{R+(\tau - R)} \re^{-(\xi_{s-} - \xi_R)} \, \di \eta_s$ being conditionally independent by the strong Markov property of L\'evy processes. If now the jump distribution of $\eta$ is absolutely continuous, we conclude that $V_{q,\xi,\eta}$ conditional on $\{\tau > R\}$ is absolutely continuous. The same is true if $\xi$ satisfies the ACP-condition, since then $\xi_R$ is absolutely continuous (see the discussion after Example \ref{ex-Hawkes} below) and hence so is $\re^{-\xi_R}$. In particular, the law of $V_{q,\xi,\eta}$ is not of pure type.
	}
\end{example}

We end our study of killed functionals with an example where $V_{q,\xi,\eta}$ is continuous but not absolutely continuous.

\begin{example} \label{ex-no-density1}
	{\rm Let $0<\alpha < 1$, $c$ be an integer such that $c> 1/(1-\alpha)$, let $a_n = 2^{-c^n}$ for $n\in \bN$ and define the L\'evy measure $\nu_\eta$ by $\nu_\eta := \sum_{n=1}^\infty a_n^{-\alpha} \delta_{a_n}$. Then  $\nu_\eta$ is infinite with $\int_{|x|\leq 1} |x|\, \nu_\eta(\dx x) < \infty$. Let $\eta$ be the subordinator with L\'evy measure $\nu_\eta$ and drift 0. According to the final part of Example 41.23 in Sato \cite{Sato2013}, the potential measure $W^q_\eta$ of $\eta$ is continuous singular for any $q>0$.
		Now let $\tau$ be an exponentially distributed random variable with parameter $q>0$ and let $\xi$ be the zero process. Then $V_{q,\xi,\eta} = \eta_\tau$ which has the same distribution as $q W_\eta^q$ (see the discussion of the ACP-condition after Example \ref{ex-Hawkes} below). It follows that $V_{q,\xi,\eta}$ is continuous singular.

\medskip
Further, to obtain an example with non-deterministic integrand, let $\xi'$ be a compound Poisson process and denote by $T$ the time of its first jump. With positive probability,~$\xi'$ does not jump before time $\tau$, i.e., $\{T > \tau\}$ has positive probability, and on this set we have $V_{q,\xi',\eta} = \eta_\tau$. Since conditionally on $\{T> \tau\}$, $\tau$ is exponentially distributed with parameter $q+\nu_\xi(\bR)$, also the conditional distribution of $\eta_\tau$ given $\{T > \tau\}$ is continuous singular. We conclude that $V_{q,\xi',\eta}$ has a non-trivial singular part.}
\end{example}

\subsection{Absolute continuity of the exponential functional without killing} \label{S-5.2}

We now turn our attention to the exponential functional without killing and assume additionally from now on that
$V_{0,\xi,\eta} := \int_0^\infty \re^{-\xi_{s-}} \, \di \eta_s$
converges a.s. and $\eta$ is not the zero process. A characterisation when the integral converges in terms of the characteristic triplet of $\xi$ and $\eta$ is given by Erickson and Maller \cite[Thm. 2]{EricksonMaller2005}. In particular, $\xi$ has to drift a.s. to $\infty$, which implies that it is transient.

\medskip
Although much more attention has been paid to $V_{0,\xi,\eta}$ rather than $V_{q,\xi,\eta}$ when $q>0$, not too many sufficient conditions for absolute continuity of $V_{0,\xi,\eta}$ are known. Bertoin et al. \cite[Thm. 3.9 (a)]{BertoinLindnerMaller2008} show that if $\eta$ is of finite variation with non-zero drift and $\nu_\xi(\bR) > 0$, then $V_{0,\xi,\eta}$ will be absolutely continuous. They also characterise continuity of $V_{0,\xi,\eta}$ and show that it is always continuous unless both $\xi$ and $\eta$ are deterministic, cf. \cite[Thm. 2.2]{BertoinLindnerMaller2008}. Kuznetsov et al. \cite[Cor. 2.5]{KuznetsovPardoSavov2012} find that $V_{0,\xi,\eta}$ has a density whenever $\sigma_\eta^2 + \sigma_\xi^2 > 0$ and $\eta$ and $\xi$  both have finite expectation. Also, it is known that when $\xi$ is spectrally negative, then $V_{0,\xi,\eta}$ is self-decomposable (\cite[Rem. (i) in Sect. 2]{BertoinLindnerMaller2008}), and hence absolutely continuous unless it is constant (e.g., \cite[Ex. 27.8]{Sato2013}), i.e., unless both $\xi$ and $\eta$ are deterministic.

\medskip
It is also known that the law of $V_{0,\xi,\eta}$ is of pure type, and even that it is either degenerate, or continuous singular, or absolutely continuous, e.g., \cite[Sect. 5]{BehmeLindnerMaller2011}. In \cite{LindnerSato2009} the law of $\int_0^\infty \re^{- (\ln c)N_{t-}} \, \di \eta_t$ is studied when $\eta$ and $N$ are two independent Poisson process and $c>1$. It is shown that the distribution in that case may be continuous singular or absolutely continuous, depending in an intrinsic way on algebraic properties of $c$ and the ratio of the rates of the two Poisson processes $N$ and $\eta$ (cf. \cite[Thms. 3.1, 3.2]{LindnerSato2009}). The question whether $V_{0,\xi,\eta}$ will always be absolutely continuous for general L\'evy processes $\xi$ and $\eta$ that have both infinite L\'evy measure (or only one of them) is still open. Still, in the following theorem, we collect various sufficient conditions for absolute continuity of $V_{0,\xi,\eta}$, many of which are new.

\begin{theorem}[Sufficient conditions for absolute continuity of $V_{0,\xi,\eta}$]\label{t-ac5}
Suppose that one of the following conditions  is satisfied:
	\begin{enumerate}
		\item[(i)] The characteristic triplet of $\eta$ satisfies
		\begin{equation} \label{eq-Kallenberg11}				\liminf_{\varepsilon \downarrow 0} \varepsilon^{-2} |\ln \varepsilon|^{-1}\left( \sigma^2_\eta + \int_{-\varepsilon}^\varepsilon x^2 \, \nu_\eta(d x)\right) > 0,
		\end{equation}
		or more generally
		\begin{equation} \label{eq-HW11}
				\liminf_{|z|\to \infty} \frac{- \Re (\Psi_\eta(z))}{\ln (1+|z|)} > 0.
		\end{equation}
		In particular, the conditions are satisfied when $\sigma_\eta^2 > 0$.
		\item[(ii)] The absolutely continuous part of $\nu_\eta$ is non-trivial: $\nu_{\eta,{\rm{ac}}} (\bR) > 0$.
		\item[(iii)] The characteristic exponent $\Psi_\xi$ of $\xi$ satisfies Condition \eqref{eq-Hawkes}, and at least one of $\xi$ and $\eta$ is non-deterministic.
		\item[(iv)] The absolutely continuous part of $\nu_\xi$ is non-trivial: $\nu_{\xi,{\rm ac}}(\bR) > 0$.
		\item[(v)] $\eta$ is of finite variation with  non-zero drift, and at least one of $\xi$ and $\eta$ is non-deterministic.
		\item[(vi)] $\xi$ is a compound Poisson process and $\eta$ satisfies the ACP condition \eqref{eq-ACP}.
		\item[(vii)] $\eta$ is a compound Poisson process and $\xi$ satisfies the ACP condition \eqref{eq-ACP}.
		\item[(viii)] $\xi$ is spectrally negative, and at least one of $\xi$ and $\eta$ is non-deterministic.
	\end{enumerate}
	Then $V_{0,\xi,\eta} = \int_0^\infty \re^{-\xi_{s-}} \, d\eta_s$ is absolutely continuous.
\end{theorem}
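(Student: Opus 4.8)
The plan is to reduce the statement for $V_{0,\xi,\eta}$ to the corresponding statement for the killed functional $V_{q,\xi,\eta}$ (Theorem~\ref{t-ac3}) by a limiting argument, and to handle the genuinely new ingredients (conditions (iv), (vii), (viii), and the ``at least one non-deterministic'' clauses) separately. First I would record the simple observation that $V_{q,\xi,\eta} \to V_{0,\xi,\eta}$ a.s.\ as $q \downarrow 0$, because $\tau = \tau_q \overset{d}{=}\mathrm{Exp}(q)$ can be coupled to increase to $\infty$ and the integral $\int_0^t \re^{-\xi_{s-}}\dx\eta_s$ converges a.s.\ by assumption; hence $\law(V_{0,\xi,\eta})$ is the weak limit of $\law(V_{q,\xi,\eta})$. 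This by itself does \emph{not} transfer absolute continuity (weak limits of a.c.\ laws need not be a.c.), so the reduction must be done at the level of the conditional laws given the path of $\xi$, exactly as the proofs in Section~\ref{S5newproofs} are organised: one conditions on $\xi = f$, observes that $\int_0^\infty f(s)\,\dx\eta_s$ is an infinitely divisible (Wiener) integral with triplet \eqref{eq-LM7}--\eqref{eq-LM5}, and the absolute-continuity criteria (i)--(v) are really statements about when such an integral is a.c.\ for $\lambda^1$-a.e.\ path $f$. The point is that the conditions in (i), (ii), (v) on $\eta$ persist in the ``$t=\infty$'' integral over the full half-line, so the same path-wise argument used for $V_{q,\xi,\eta}$ applies verbatim, with $[0,\tau]$ replaced by $[0,\infty)$.

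Next I would dispatch the conditions that have no direct killed analogue. For (viii): if $\xi$ is spectrally negative, then by the remark cited in Section~\ref{S-5.2} (\cite[Rem.~(i), Sect.~2]{BertoinLindnerMaller2008}) $V_{0,\xi,\eta}$ is self-decomposable; a self-decomposable law on $\bR$ is either degenerate or absolutely continuous (\cite[Ex.~27.8]{Sato2013}), and it is degenerate only if both $\xi$ and $\eta$ are deterministic, so the ``at least one non-deterministic'' hypothesis forces absolute continuity. For (iv), $\nu_{\xi,\mathrm{ac}}(\bR) > 0$: here I would condition on $\eta$ (rather than on $\xi$) or, more naturally, exploit that a positive a.c.\ part of $\nu_\xi$ makes $\xi_t$ absolutely continuous for each $t>0$ with a density depending smoothly enough on $t$; combined with the representation $V_{0,\xi,\eta} \stackrel{d}{=} \re^{-\xi_T}\bigl(\Delta\eta_T + \int_{T+}^\infty \re^{-(\xi_{s-}-\xi_T)}\dx\eta_s\bigr)$ after the first jump epoch of an auxiliary decomposition (or via the ACP machinery of Section~\ref{S6new}), the factor $\re^{-\xi_T}$ carries an a.c.\ component that is independent of the remaining factor, yielding absolute continuity of the product. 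Condition (vii) — $\eta$ a compound Poisson process and $\xi$ satisfying the ACP condition — is the mirror image of (vi) with the roles of $\xi$ and $\eta$ swapped; I would argue that on the event $\{\eta$ jumps before $\xi\}$ (which has positive probability) $V_{0,\xi,\eta} = \re^{-\xi_{R}}\Delta\eta_{R} + \int_{R+}^\infty\re^{-\xi_{s-}}\dx\eta_s$ where $\xi_R$ is absolutely continuous by the ACP condition on $\xi$ (since $R$ is independent exponential and $W_\xi^p$ a.c.\ implies $\xi_R$ a.c.), and then push the a.c.\ property through using conditional independence and the fact that a jump of $\eta$ at an independent a.c.\ location, times an independent factor, produces an a.c.\ law — this needs the additional step of handling the complementary event where $\eta$ never jumps before $\xi$, i.e.\ an induction over successive jump epochs of $\xi$ or a direct resolvent computation.

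I would then glue everything together: for each of (i)--(vi) invoke the path-wise/conditioning argument that mirrors the proof of Theorem~\ref{t-ac3} with $\tau$ replaced by $\infty$ (and, where needed, replace ``$\nu_{\eta,\mathrm{ac}}(\bR)=\infty$'' by ``$\nu_{\eta,\mathrm{ac}}(\bR)>0$'' and ``$\nu_\eta(\bR)=\infty$'' by ``$\nu_\xi(\bR)>0$ and $\eta$ non-deterministic'', etc., tracking carefully that the weaker hypotheses still suffice once the integration range is the whole half-line because the relevant tail integrals are now over $[0,\infty)$ and hence larger); for (vii) and (viii) give the self-contained arguments sketched above; and note that (iv) can alternatively be subsumed under the ACP discussion. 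The main obstacle, I expect, is condition (vii): unlike (vi), where $\xi$ compound Poisson lets one condition on the finitely many jumps of $\xi$ up to $\tau$, here the full integral runs to $\infty$ and $\eta$ (compound Poisson) has infinitely many jumps over $[0,\infty)$ a.s., so one cannot simply condition on a last jump; the argument must instead isolate a \emph{first} jump of $\eta$, use the ACP-induced absolute continuity of $\xi$ at that independent exponential time, and verify that the conditional independence structure survives — together with a careful treatment of the geometric/recursive decomposition over successive $\eta$-jumps so that the a.c.\ component does not get lost in the limit. A secondary subtlety is making the ``at least one non-deterministic'' hypotheses do exactly the right amount of work in (iii), (v), (viii), ruling out only the genuinely degenerate case without over-claiming.
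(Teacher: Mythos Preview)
Your plan for (i)--(iii), (vi) and (viii) is essentially the paper's: condition on $\xi=f$, use $\lambda^1(\{s:\re^{-f(s-)}\neq 0\})=\infty$, and invoke Corollary~\ref{t-ac1}; for (viii) use self-decomposability. The observation that the weaker hypotheses $\nu_{\eta,\mathrm{ac}}(\bR)>0$ (resp.\ $\nu_\eta(\bR)>0$) suffice because the first factor in Corollary~\ref{t-ac1}~(ii),(iii) is now infinite is exactly right.

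There are, however, three genuine gaps. First, for (v) your ``replace $\tau$ by $\infty$'' recipe fails: the proof of Theorem~\ref{t-ac3}~(v) goes through Corollary~\ref{c-ac1}~(v), which relies on the \emph{absolute continuity of the stopping time $R=\tau$}; with $\tau=\infty$ there is no such time to condition on. The paper instead quotes \cite[Thm.~3.9(a)]{BertoinLindnerMaller2008} when $\nu_\xi(\bR)>0$ and reduces to (viii) when $\nu_\xi(\bR)=0$. Second, your sketch for (iv) rests on the assertion that $\nu_{\xi,\mathrm{ac}}(\bR)>0$ forces $\xi_t$ to be absolutely continuous for every $t$, which is false (a finite absolutely continuous component of $\nu_\xi$ does not preclude an atom of $\xi_t$ on the event of no such jump). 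The paper's argument is different: pick a bounded-away-from-zero set $D$ with $\nu_{\xi,\mathrm{ac}}(D)>0=\nu_{\xi,\mathrm{sing}}(D)$, let $R$ be the first jump time in $D$ and $Y$ its size, and write $V_{0,\xi,\eta}=A+\re^{-Y}B$ with $Y$ independent of $(A,B)$ and $\re^{-Y}$ absolutely continuous; one then needs $B\neq 0$ a.s., which follows because $\int_{R+}^\infty\re^{-(\xi_{s-}-\xi_R)}\,\dx\eta_s\stackrel{d}{=}V_{0,\xi,\eta}$ is \emph{continuous} by \cite[Thm.~2.2]{BertoinLindnerMaller2008}. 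Third, for (vii) you are overcomplicating matters and your induction over jump epochs is not needed. Since $\eta$ is compound Poisson, $V_{0,\xi,\eta}=\sum_{i\geq 1}\re^{-\xi_{R_i}}Z_i=\re^{-\xi_{R_1}}\cdot\sum_{i\geq 1}\re^{-(\xi_{R_i}-\xi_{R_1})}Z_i$, with $\xi_{R_1}$ independent of the second factor by the strong Markov property; $\re^{-\xi_{R_1}}$ is absolutely continuous by the ACP condition (as $R_1$ is independent exponential), and the second factor is a.s.\ nonzero again because $V_{0,\xi,\eta}$ has no atom. The ingredient you are missing in both (iv) and (vii) is precisely this appeal to the atomlessness of $V_{0,\xi,\eta}$ from \cite{BertoinLindnerMaller2008}, which guarantees that the multiplicative factor carrying the density is paired with something nonzero.
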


Observe that part (viii) above is already covered by parts (i), (iii) and (v), for if $\xi$ is spectrally negative and drifts to infinity, then it is either of finite variation with strictly positive drift, or it is of infinite variation with $\nu_\xi((0,\infty)) = 0$, so that in both cases $\Psi_\xi$ satisfies Condition \eqref{eq-Hawkes} and  (viii) follows from (iii) when $\nu_\eta(\bR) > 0$. When $\nu_\eta(\bR) = 0$, (viii) follows from (i) and (v).

\medskip
The following result generalises Corollary 2.5 of Kuznetsov et al. \cite{KuznetsovPardoSavov2012} in the sense that it shows that the assumption in \cite{KuznetsovPardoSavov2012} that both $\xi$ and $\eta$ have finite expectation can be omitted for the existence of a density.

\begin{corollary} \label{c-ac6}
Suppose that $\sigma_\eta^2+\sigma_\xi^2 > 0$. Then $V_{0,\xi,\eta}$ is absolutely continuous.
\end{corollary}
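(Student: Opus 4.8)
The plan is to reduce the statement to the appropriate parts of Theorem \ref{t-ac5} by a straightforward case distinction on which of the two Gaussian variances is strictly positive. First suppose $\sigma_\eta^2 > 0$. Then condition \eqref{eq-Kallenberg11} of Theorem \ref{t-ac5}(i) is satisfied trivially (the bracketed term is bounded below by $\sigma_\eta^2 > 0$ and $\varepsilon^{-2}|\ln\varepsilon|^{-1} \to \infty$ as $\varepsilon \downarrow 0$), so Theorem \ref{t-ac5}(i) immediately yields absolute continuity of $V_{0,\xi,\eta}$. Note no non-degeneracy hypothesis is needed here because $\sigma_\eta^2>0$ already forces $\eta$ to be non-deterministic.

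Now suppose $\sigma_\eta^2 = 0$ but $\sigma_\xi^2 > 0$. In this case $\xi$ has a Gaussian component, hence in particular $\xi$ is non-deterministic, so the non-degeneracy requirement \emph{``at least one of $\xi$ and $\eta$ is non-deterministic''} appearing in parts (iii), (v), (viii) is automatically met. It then suffices to invoke any one of these parts; the cleanest is Theorem \ref{t-ac5}(iii): a L\'evy process with $\sigma_\xi^2 > 0$ satisfies the Hawkes-type condition \eqref{eq-Hawkes}, since $\Re(\Psi_\xi(z)) = -\sigma_\xi^2 z^2/2 + \int_\bR(\cos(zx)-1)\,\nu_\xi(\di x) \leq -\sigma_\xi^2 z^2/2$, so $1 - \Psi_\xi(z)$ has real part growing like $\sigma_\xi^2 z^2/2$ and $\Re\bigl(1/(1-\Psi_\xi(z))\bigr) \leq 1/(1 + \sigma_\xi^2 z^2/2)$, which is integrable over $\bR$. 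Hence \eqref{eq-Hawkes} holds and Theorem \ref{t-ac5}(iii) gives the claim. (Alternatively, when $\xi$ has a Gaussian component it is of infinite variation, so if additionally $\nu_\eta(\bR)=0$ one may instead use part (i) or (v) applied to $\eta$; but the single appeal to (iii) already covers everything since it poses no restriction on $\eta$ beyond convergence of the integral, which we assume.)

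There is essentially no obstacle here: the corollary is a direct packaging of Theorem \ref{t-ac5}. The only point requiring a line of verification is the elementary estimate showing $\sigma_\xi^2 > 0 \Rightarrow$ \eqref{eq-Hawkes}, which is the computation just sketched. I would present the argument as the two-line case distinction above, with the Hawkes-condition estimate spelled out, and remark that this sharpens \cite[Cor. 2.5]{KuznetsovPardoSavov2012} precisely because neither part (i) nor part (iii) of Theorem \ref{t-ac5} imposes any moment assumption on $\xi$ or $\eta$.
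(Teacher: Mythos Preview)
Your proposal is correct and follows essentially the same route as the paper: both split into the cases $\sigma_\eta^2>0$ (use Theorem~\ref{t-ac5}(i)) and $\sigma_\xi^2>0=\sigma_\eta^2$ (use that $\sigma_\xi^2>0$ forces the Hawkes condition \eqref{eq-Hawkes}, then invoke Theorem~\ref{t-ac5}). The paper cites Example~\ref{ex-Hawkes} for \eqref{eq-Hawkes} and then makes a further sub-case distinction appealing to parts (iii) and (v), whereas you verify \eqref{eq-Hawkes} by the direct estimate $\Re(1/(1-\Psi_\xi(z)))\le 1/(1+\sigma_\xi^2 z^2/2)$ and observe that part (iii) alone already suffices since $\sigma_\xi^2>0$ makes $\xi$ non-deterministic---a slightly cleaner packaging of the same argument.
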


A result similar to Corollary \ref{c-ac7} does not hold when $q=0$. This follows by observing that $\int_0^\infty \re^{-\ln(c) N_{t-}} \, \di \eta_t$ can be absolutely continuous for suitable constants $c>1$ and Poisson processes $N$ and $\eta$ by \cite[Thm. 3.2]{LindnerSato2009}; obviously, a Poisson process does not satisfy the ACP condition.

\medskip
Let us finally mention that similar to Remark \ref{rem-ext-new}, some of the results of Theorem \ref{t-ac5} can be easily extended to functionals of the form $\smash{\int_0^\infty g(\xi_{s-}) \, \di \eta_s}$, assuming the convergence of the integral. In particular, when $g:[0,\infty) \to \bR$ is continuous with zero set $N$, if the integral converges, if $\smash{\int_0^\infty P(\xi_s \in N) \, \di s} = 0$ (i.e., \eqref{eq-g-N}) and if one of the conditions (i)--(iii) of Theorem \ref{t-ac5} is satisfied, then $\smash{\int_0^\infty g(\xi_{s-}) \, \di \eta_s}$ will be absolutely continuous. Sufficient conditions for \eqref{eq-g-N} have been discussed in Remark \ref{rem-ext-new}.

\subsection{Discussion and further sufficient conditions for (absolute) continuity} \label{S-discussion}

We end this section with some remarks concerning the various assumptions appearing in Theorems \ref{t-ac3} and \ref{t-ac5} above.

\medskip
Kallenberg's condition \eqref{eq-Kallenberg5} is a classical condition for absolute continuity of an infinitely divisible distribution. His proof (\cite[pp.794--795]{Kallenberg1981}) shows that it implies the Hartman-Wintner condition \eqref{eq-HW3}.

\medskip
To interpret Condition \eqref{eq-Hawkes}, for each path $f$ of $\xi$, we define the occupation measure~$\rho_{f,t}$ on $\cB_1$ by
	$$\rho_{f,t}(B) = \int_0^t \one_B (f(s)) \, \di s, \quad B \in \cB_1.$$
Then, as shown by  Hawkes \cite{Hawkes1985} (cf. Bertoin \cite[Thm. V.1]{Bertoin1996}), Condition \eqref{eq-Hawkes} is equivalent to the fact that for $P_\xi$-almost every path $f$ of $\xi$, the occupation measure $\rho_{f,t}$ is absolutely continuous. This in turn is equivalent to saying that for $P_\xi$-almost every path $f$ of~$\xi$, preimages of Lebesgue nullsets under the mapping $f:[0,t] \to \bR$ are again Lebesgue nullsets, i.e., that $f:[0,t]\to \bR$ satisfies the so-called Lusin$(N^{-1})$-condition. A further equivalent condition can be expressed in terms of potential measures. Condition \eqref{eq-Hawkes} is then equivalent to the fact that $W^p_\xi$ has a bounded Lebesgue density for some, equivalently all, $p>0$, cf.  \cite[Thm. II.16]{Bertoin1996} or \cite[Thm. 43.3, Rem. 43.6]{Sato2013}. Finally, Condition~\eqref{eq-Hawkes} is further equivalent to the fact that
single points are not essentially polar under $\xi$, equivalently that the $p$-capacity $C^p(\{0\})$ of $\xi$ is strictly positive for some, equivalently all, $p>0$, see \cite[Sect. II.3]{Bertoin1996} or \cite[Def. 41.14, 42.6]{Sato2013} for the definitions of essentially polar sets and the $p$-capacity and \cite[Thm. II.16]{Bertoin1996} or \cite[Prop. 43.2, Thm. 43.3]{Sato2013} for the corresponding results. We collect some known examples when Condition \eqref{eq-Hawkes} is satisfied:

\begin{example}[Sufficient conditions for \eqref{eq-Hawkes}] \label{ex-Hawkes}
	{\rm Let $\xi$ be a L\'evy process. If $\xi$ is of finite variation, then \eqref{eq-Hawkes} holds if and only if the drift $\gamma_\xi^0$ of $\xi$ is different from zero (\cite[Cor.~II.20~(ii)]{Bertoin1996}, \cite[Thm. 43.13]{Sato2013}). Condition \eqref{eq-Hawkes} also holds if $\sigma_\xi^2>0$ (\cite[Thm. 43.21 Case~6]{Sato2013}) or more generally if $\xi$ is $\alpha$-stable with index $\alpha \in (1,2]$ (\cite[Ex. 43.22]{Sato2013}). A non-deterministic 1-stable process $\xi$ satisfies \eqref{eq-Hawkes} if and only if it is not strictly 1-stable, cf.~\cite[Ex. 43.7]{Sato2013}. Condition \eqref{eq-Hawkes} is further satisfied, when
		\begin{equation} \label{eq-Hawkes2}
				\int_0^1 x \, \nu_\xi(\di x) < \infty = \int_{-1}^0 |x| \, \nu_\xi(\di x) \quad \mbox{or} \quad
				\int_{-1}^0 |x| \, \nu_\xi(\di x) < \infty = \int_0^1 x \nu_\xi(\di x),
		\end{equation}
		cf. \cite[Thm. 43.24]{Sato2013}.}
\end{example}

Finally, the \emph{ACP condition} \eqref{eq-ACP} is equivalent to saying that $W^p_\eta$ is absolutely continuous for \emph{all} $p\in [0,\infty)$, cf. \cite[Rem.~41.12]{Sato2013}. If $p\in (0,\infty)$ and $T$ is an exponentially distributed time with parameter~$p$, independent of $\eta$, then by conditioning on $T=t$ we have for all $B\in \cB_1$

	$$P( \eta_T \in B ) = \int_0^\infty P( \eta_t \in B) \, P_T(\di t) = p \int_0^\infty \re^{-pt} P(\eta_t \in B)\, \di t = p W^p_\eta(B).$$
Hence \eqref{eq-ACP} means nothing else than that $\eta_T$ is absolutely continuous for any (equivalently: some) exponentially distributed independent time $T$ with parameter in $(0,\infty)$. Obviously,~\eqref{eq-ACP} is satisfied when $\eta_t$ is absolutely continuous for each $t>0$, but the converse is not true in general, see, e.g., \cite[Rem.~41.13]{Sato2013}.
Recall also from above that $\int_\bR \Re (\frac{1}{1-\Psi_\eta(z)}) \, \di z < \infty$ if and only if $W^1_\eta$ has a \emph{bounded} density, so that \eqref{eq-Hawkes} for $\eta$ implies~\eqref{eq-ACP} for $\eta$.
In particular, $\eta$ satisfies \eqref{eq-ACP} if $\eta$ is of finite variation with non-zero drift, if $\sigma_\eta^2>0$, or if $\eta$ satisfies \eqref{eq-Hawkes2} (with $\nu_\eta$ replacing $\nu_\xi$). Examples exist when $\eta$ satisfies \eqref{eq-ACP} but not \eqref{eq-Hawkes}, see \cite[Thm. 43.21]{Sato2013}; one such example is when $\eta$ is a non-deterministic strictly $\alpha$-stable process of index $\alpha \in (0,1)$.

\section{Conditions for (absolute) continuity of other related integrals} \label{S6new}
In this section we will consider integrals as they appear when a (killed) exponential function is treated either conditionally on the paths of $\xi$, or on the killing time. More precisely, the integral $\smash{\int_0^t \re^{-\xi_{s-}} \, \di \eta_s}$ conditional on a path $\xi=f$ is equal to the integral $\smash{\int_0^t \re^{-f(s-)} \, \di \eta_s}$, and hence the improper integral $V_{0,\xi,\eta} = \smash{\int_0^\infty \re^{-\xi_{s-}} \, \di \eta_s}$ conditional on $\xi=f$ is equal to the improper integral $\smash{\int_0^\infty \re^{-f(s-)}\, \di \eta_s}$, and similarly for killed exponential functionals. It will then be possible in the proofs given in Section \ref{S5newproofs} to deduce some of the continuity properties stated in Section \ref{S5neu} of (killed) exponential functionals from those of the corresponding conditioned integrals.


\subsection{Conditions for absolute continuity of  $\int_0^\infty f(s) \, \di \eta_s$} \label{S5}

Throughout, let $\eta=(\eta_t)_{t\geq 0}$ be a one-dimensional L\'evy process with characteristic triplet $(\sigma_\eta^2,\nu_\eta, \gamma_\eta)$and ${f:[0,\infty) \to \bR}$ a deterministic Borel measurable function. We say that $f$ is \emph{locally integrable}, or more precisely, \emph{locally integrable with respect to the independently scattered random measure induced by $\eta$},  if $f\one_{[0,t]}$ is integrable with respect to $\eta$ in the sense of Raj\-put and Rosinski (cf. \cite[p.~460]{RajputRosinski1989}, or also Sato \cite[Def. 57.8]{Sato2013}) for every $t\in (0,\infty)$. The corresponding integral over $[0,t]$ is denoted by $\int_0^t f(s )\, \di \eta_s$. Since $t\mapsto \int_0^t f(s) \, \di \eta_s$ defines an additive process, a version of the integral exists that has c\`adl\`ag paths (e.g.,~\cite[Thm.~11.5]{Sato2013}), and we shall always assume that such a version is chosen. We say that the \emph{improper integral $\int_0^{\infty} f(s) \, \di \eta_s$ exists}, if $f$ is locally integrable and $\int_0^t f(s) \, \di \eta_s$ converges in probability to a finite random variable as $t\to\infty$, equivalently by the independent increments property and the c\`adl\`ag paths, if it converges a.s. to a finite random variable as $t\to\infty$; we denote the limit by $\int_0^\infty f(s) \, \di\eta_s$.  A characterisation of functions for which the improper integral $\int_0^\infty f(s) \, \di \eta_s$ exists can be found in Sato \cite[Prop. 5.5]{Sato2006} or~\cite[Prop.~57.13]{Sato2013}. In particular,
every locally bounded  measurable function is locally integrable, and for functions with bounded support, integrability as defined in Rajput and Rosinski~\cite[p.~460]{RajputRosinski1989} is equivalent to the existence of the improper integral.

\medskip
When the improper integral $\int_0^{\infty} f(s) \, \di\eta_s$ exists, its distribution is infinitely divisible with characteristic triplet $(\sigma_f^2, \nu_f, \gamma_f)$, where $\sigma_f^2 = \sigma_\eta^2 \int_0^\infty f(s)^2 \, \di s$ and $\nu_f$ is given by \eqref{eq-LM1} with $t=\infty$, i.e.,
\begin{equation} \label{eq-LM1a}
\nu_f(B)  =  \int_0^\infty \int_\bR \one_{B\setminus \{0\}} (f(s) x) \, \nu_\eta(\di x) \, \di s,  \quad B \in \cB_1.
\end{equation}
Further, the characteristic exponent $\Psi_f$ of $\int_0^{\infty} f(s)\, \di \eta_s$ is given by
\begin{equation}
\Psi_f (z) = \lim_{t\to\infty} \int_0^t \Psi_\eta(f(s) z) \, \di s, \label{eq-LM2}
\end{equation}
cf. \cite[Prop. 57.13]{Sato2013}. We can hence apply continuity results for infinitely divisible distributions.
We start with a simple result characterising continuity of $\int_0^\infty f(s) \, \di \eta_s$, i.e., when the distribution has no atoms.

\begin{proposition}[Continuity of $\int f(s) \, \di \eta_s$] \label{p-cont1}Let $f:[0,\infty) \to \bR$ be a deterministic Borel measurable function such that the improper integral $\int_0^\infty f(s) \, \di \eta_s$ exists and such that $f\neq 0$ on a set of positive Lebesgue measure. Then $\int_0^\infty f(s) \, \di \eta_s$ is continuous (i.e., has no atoms) if and only if
$$\sigma_\eta^2 > 0, \quad \mbox{or} \quad
 \lambda^1 (\{ s \in [0,\infty) : f(s) \neq 0\}) \cdot \nu_\eta(\bR)= \infty.$$
\end{proposition}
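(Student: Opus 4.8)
The plan is to reduce the statement to a known continuity criterion for infinitely divisible distributions, using the description of the characteristic triplet of $\int_0^\infty f(s)\,\di\eta_s$ given in \eqref{eq-LM1a}--\eqref{eq-LM2}. Recall that an infinitely divisible law on $\bR$ with triplet $(\sigma^2,\nu,\gamma)$ is continuous (atomless) if and only if $\sigma^2>0$ or $\nu(\bR)=\infty$; if $\sigma^2=0$ and $\nu(\bR)<\infty$, the law is the convolution of a translated Dirac mass with a compound Poisson distribution, hence has an atom (at the translation point, which is charged by the Dirac factor and not destroyed by convolving with a purely atomic compound Poisson law), while if $\sigma^2>0$ or $\nu(\bR)=\infty$ the law is continuous since it is a convolution involving either a Gaussian factor or a compound-Poisson-limit factor with no atoms (cf.\ \cite[Thm.~27.4]{Sato2013}). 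So the whole statement comes down to translating the two stated conditions into statements about $\sigma_f^2$ and $\nu_f(\bR)$.

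First I would handle the Gaussian part: by definition $\sigma_f^2=\sigma_\eta^2\int_0^\infty f(s)^2\,\di s$, and since $f\neq0$ on a set of positive Lebesgue measure we have $\int_0^\infty f(s)^2\,\di s>0$ (possibly $+\infty$, which is harmless), so $\sigma_f^2>0$ if and only if $\sigma_\eta^2>0$. Next I would compute $\nu_f(\bR)$ from \eqref{eq-LM1a} with $B=\bR$:
$$\nu_f(\bR)=\int_0^\infty\int_\bR \one_{\{f(s)x\neq0\}}\,\nu_\eta(\di x)\,\di s=\int_0^\infty \one_{\{f(s)\neq0\}}\,\nu_\eta(\bR\setminus\{0\})\,\di s=\lambda^1(\{s:f(s)\neq0\})\cdot\nu_\eta(\bR),$$
using $\nu_\eta(\{0\})=0$ and Tonelli's theorem. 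Here one reads off that $\nu_f(\bR)=\infty$ precisely when $\lambda^1(\{s:f(s)\neq0\})\cdot\nu_\eta(\bR)=\infty$ (with the usual convention $0\cdot\infty=0$, though that case is excluded by hypothesis when it would force $\lambda^1(\{f\neq0\})=0$). Combining these two computations with the infinite-divisibility continuity criterion gives exactly the claimed equivalence.

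The only genuinely delicate point is to make sure $\int_0^\infty f(s)\,\di\eta_s$ really is infinitely divisible with the triplet claimed, but this is already granted in the excerpt: the paragraph preceding the proposition states that when the improper integral exists, its law is infinitely divisible with $\sigma_f^2=\sigma_\eta^2\int_0^\infty f(s)^2\,\di s$ and $\nu_f$ as in \eqref{eq-LM1a}. Thus there is no real obstacle; the proof is essentially a two-line bookkeeping argument once the triplet formulas and \cite[Thm.~27.4]{Sato2013} are invoked. I would write it out in exactly that order: (1) invoke infinite divisibility and the triplet; (2) identify $\sigma_f^2>0\iff\sigma_\eta^2>0$; (3) compute $\nu_f(\bR)=\lambda^1(\{f\neq0\})\,\nu_\eta(\bR)$ via Tonelli; (4) conclude via the standard atomless-iff-$(\sigma^2>0$ or $\nu(\bR)=\infty)$ dichotomy.
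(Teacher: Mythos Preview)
Your proposal is correct and follows essentially the same approach as the paper's proof: invoke \cite[Thm.~27.4]{Sato2013} for the continuity criterion in terms of the characteristic triplet, then use the formulas $\sigma_f^2=\sigma_\eta^2\int_0^\infty f(s)^2\,\di s$ and $\nu_f(\bR)=\lambda^1(\{s:f(s)\neq0\})\,\nu_\eta(\bR)$ from \eqref{eq-LM1a} together with the hypothesis that $f\neq0$ on a set of positive measure. Your write-up is a bit more expansive than the paper's two-line version, but the logic is identical.
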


\begin{proof}
By \cite[Thm. 27.4]{Sato2013}, $\int_0^\infty f(s) \, \di \eta_s$ is continuous if and only if $\sigma_f^2>0$ or $\nu_f(\bR) = \infty$. Since $f$ is not Lebesgue almost everywhere equal to zero, the claim follows by observing that  $\sigma_f^2 = \sigma_\eta^2 \int_0^\infty f(s)^2 \, \di s$ and
$\nu_f(\bR) = \lambda^1 (\{ s\in [0,\infty): f(s) \neq 0\}) \, \nu_\eta(\bR)$ by \eqref{eq-LM1a}.
\end{proof}

In the following proposition we collect some known results ensuring absolute continuity of infinitely divisible distributions. As before, we denote by $\Re(z)$ the real part of a complex number $z$.

\begin{lemma}[Absolute continuity of infinitely divisible distributions] \label{prop-id-ac}
Let $\mu$ be an infinitely divisible distribution with characteristic triplet $(\sigma^2,\nu,\gamma)$ and characteristic exponent $\Psi$.
\begin{itemize}
\item[(i)] If Kallenberg's condition \eqref{eq-Kallenberg5} or more generally the Hartman-Wintner condition  \eqref{eq-HW3} is satisfied, then $\mu$ is absolutely continuous with infinitely often differentiable density with all derivatives vanishing at infinity.
\item[(ii)] If
\begin{equation} \label{eq-David1}
\lim_{\varepsilon \downarrow 0} \varepsilon^{-2} |\ln \varepsilon|^{-1}\left( \sigma^2 + \int_{-\varepsilon}^{\varepsilon} x^2 \, \nu (\di x)\right) > 1/4,
\end{equation}
or more generally
\begin{equation} \label{eq-David2}
\liminf_{|z|\to\infty} \frac{-\Re (\Psi(z))}{\ln (1+|z|)}> 1/2,
\end{equation}
then $\mu$ is absolutely continuous with square integrable density (\cite[Cor. 3.6]{Berger2018}).
\item[(iii)] If the absolutely continuous part $\nu_{\rm{ac}}$ in the Lebesgue decomposition of $\nu$ is infinite, then $\mu$ is absolutely continuous (\cite[Thm. 27.7]{Sato2013}).
\end{itemize}
\end{lemma}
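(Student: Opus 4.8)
The three parts of the lemma are independent citation-style facts about infinitely divisible distributions, so the plan is to reduce each one to a known theorem and, where needed, record the minimal argument connecting the stated hypothesis to that theorem. For part (i), the Hartman--Wintner condition \eqref{eq-HW3} is the hypothesis under which $\re^{t\Psi}$ is integrable for every $t>0$ (indeed $|\re^{t\Psi(z)}| = \re^{t\Re\Psi(z)}$ and \eqref{eq-HW3} forces $\Re\Psi(z) \le -C\ln(1+|z|)$ eventually for every $C$, hence $|\re^{t\Psi(z)}| \le (1+|z|)^{-tC}$ which is integrable once $tC>1$); Fourier inversion then gives a bounded continuous density, and the same bound applied to $z^k\re^{t\Psi(z)}$ for every $k$ gives that the density is $C^\infty$ with all derivatives tending to $0$ at infinity by Riemann--Lebesgue. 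The remaining point is that Kallenberg's condition \eqref{eq-Kallenberg5} implies \eqref{eq-HW3}: this is exactly the content of Kallenberg's estimate in \cite[pp.~794--795]{Kallenberg1981}, so I would simply cite it, noting that $\sigma^2>0$ trivially gives $-\Re\Psi(z) \ge \sigma^2 z^2/2$ and hence \eqref{eq-HW3}.

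For part (ii), I would cite \cite[Cor.~3.6]{Berger2018} directly for the implication \eqref{eq-David2} $\Rightarrow$ square-integrable density: the idea is that $-\Re\Psi(z) > (\tfrac12+\delta)\ln(1+|z|)$ eventually makes $\re^{\Psi}$ (i.e.\ the characteristic function, $t=1$) lie in $L^2(\bR)$, since $|\re^{\Psi(z)}|^2 = \re^{2\Re\Psi(z)} \le (1+|z|)^{-1-2\delta}$ eventually, and then Plancherel gives an $L^2$ density. It then remains only to observe that \eqref{eq-David1} implies \eqref{eq-David2}, which is again Kallenberg's estimate with the constants tracked: the same computation that yields \eqref{eq-HW3} from \eqref{eq-Kallenberg5} yields, from a $\liminf$ bounded below by $c$, a $\liminf$ in \eqref{eq-David2} bounded below by $c$ up to the universal constant appearing in Kallenberg's bound, and $1/4 \mapsto 1/2$ is precisely the threshold that works. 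For part (iii), there is nothing to do beyond citing \cite[Thm.~27.7]{Sato2013}, which states exactly that an infinitely divisible law whose L\'evy measure has infinite absolutely continuous part is absolutely continuous.

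Since all three parts are invocations of published results, the only genuine content to supply is the passage from the Kallenberg-type conditions \eqref{eq-Kallenberg5} and \eqref{eq-David1} to the Hartman--Wintner-type conditions \eqref{eq-HW3} and \eqref{eq-David2}; this is where I expect the one non-trivial estimate to sit, and it is handled by reproducing (or citing verbatim) Kallenberg's inequality relating $-\Re\Psi(z)$ to the truncated second moment $\sigma^2 + \int_{-1/|z|}^{1/|z|} x^2\,\nu(\di x)$. Concretely, Kallenberg shows there is a universal constant such that $-\Re\Psi(z) \ge \text{const}\cdot z^2\big(\sigma^2 + \int_{|x|\le 1/|z|} x^2\,\nu(\di x)\big)$ for $|z|$ large, and dividing by $\ln(1+|z|)$ and substituting $\varepsilon = 1/|z|$ converts the $\varepsilon$-limit hypotheses into the $z$-limit conclusions with the stated constants. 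I would state this as one display, apply it in both parts, and be done.

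Overall the proof is short: part (iii) is a one-line citation, and parts (i) and (ii) each amount to ``the relevant hypothesis forces enough decay of the characteristic function that Fourier inversion applies,'' with the translation between the two forms of the hypothesis being the only step requiring an actual inequality. The main (modest) obstacle is simply bookkeeping the constant $1/4$ versus $1/2$ in part (ii) correctly, i.e.\ making sure the threshold in \eqref{eq-David1} is the one that Kallenberg's inequality maps to the threshold $1/2$ in \eqref{eq-David2} needed for $L^2$-membership of the characteristic function.
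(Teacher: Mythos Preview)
Your proposal is correct and matches the paper's own treatment almost exactly: the paper also cites Kallenberg for \eqref{eq-Kallenberg5}$\Rightarrow$\eqref{eq-HW3}, deduces part~(i) from integrability of $|z|^k|\widehat{\mu}(z)|$ for all $k$, cites Berger for part~(ii) with the same Plancherel reasoning you sketch, and cites Sato for part~(iii). The only small slip is in (iii): \cite[Thm.~27.7]{Sato2013} actually requires the \emph{entire} L\'evy measure to be absolutely continuous (and infinite), not just its absolutely continuous part, so one extra line is needed --- split $\nu=\nu_{\rm ac}+\nu_{\rm sing}$, apply Sato's theorem to the convolution factor carrying $\nu_{\rm ac}$, and use that a convolution with an absolutely continuous factor is absolutely continuous --- which is precisely the observation the paper records.
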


As mentioned, Kallenberg proof (\cite[pp.~794--795]{Kallenberg1981}) shows that \eqref{eq-Kallenberg5} implies the Hartman-Wintner condition \eqref{eq-HW3} which in turn implies that the Fourier transform $\widehat{\mu}$ of $\mu$ satisfies $\int_\bR |x|^k \, |\widehat{\mu}(x)| \, \di x < \infty$ for all $k\in \bN$, giving (i). It has been noted by several authors that if the right-hand sides of \eqref{eq-David1} and \eqref{eq-David2} are replaced by $>1$ (e.g.,  \cite[p.~853]{Chang2017}, \cite[p.~127]{KnopovaSchilling2013} for \eqref{eq-David1} and Hartman and Wintner \cite[pp. 794--795]{HartmanWintner1942} themselves for~\eqref{eq-David2}), then $\mu$ has a continuous and bounded density vanishing at infinity. The fact that the constant 1 can even be replaced by $1/4$  and $1/2$ (as done in \eqref{eq-David1} and \eqref{eq-David2} in (ii)) to ensure absolute continuity has been shown by Berger \cite[Cor. 3.6]{Berger2018}. In fact, Berger's proof shows that \eqref{eq-David1} implies \eqref{eq-David2}, which in turn implies square integrability of the Fourier transform of $\mu$, thus giving absolute continuity of $\mu$ with square integrable density (cf.~\cite[Thm. 11.6.1]{Kawata1972}). Part (iii) is an easy consequence of Sato \cite[Thm. 27.7]{Sato2013}, by observing that the convolution of an absolutely continuous distribution with another distribution is again absolutely continuous.

\medskip
An interesting feature of integrals of the form $\int_0^\infty f(s) \, \di \eta_s$ is that their distribution is often smoother than the original distribution. A well known example is $\int_0^\infty \re^{-as} \, \di \eta_s$ whenever $a>0$ and $\eta$ is a non-deterministic L\'evy process such that the integral converges, which always gives a self-decomposable and hence absolutely continuous distribution. That this phenomenon can happen also with functions with compact support was exemplified by Nourdin and Simon \cite[Thm. A]{NourdinSimon2006}, who showed that $\int_0^t \re^{-as} \, \di \eta_s$ will always be absolutely continuous whenever $a\neq 0$ and $\eta$ is such that $\sigma_\eta^2>0$ or $\nu_\eta(\bR) = \infty$. See also Bodnarchuk and Kulik \cite[Prop. 2, Theorem 1]{BodnarchukKulik2009}, who even characterised when $\smash{\int_0^t \re^{-as} \, \di \eta_s}$ has a bounded density for all $t>0$. Berger \cite[Lemma 4.1]{Berger2018} showed that when $\eta$ has infinite L\'evy measure and $f:[0,t] \to \bR$ is a $C^1$-diffeomorphism onto its range, then $\smash{\int_0^t f(s) \, \di \eta_s}$ is absolutely continuous. Part (iii) below, which essentially is Remark 4.2 in Berger \cite{Berger2018}, generalises this result. For the reader's convenience, we repeat his short proof.

\begin{corollary}[Absolute continuity of $\int f(s) \, \di \eta_s$] \label{t-ac1}
Let $f:[0,\infty) \to \bR$ be a deterministic Borel measurable function such that the improper integral $\int_0^\infty f(s) \, \di \eta_s$ exists. Assume that~$f\neq 0$ on a set of positive Lebesgue measure. Then each of the following conditions implies that $\int_0^\infty f(s) \, \di \eta_s$ is absolutely continuous with respect to Lebesgue measure:
\begin{enumerate}
\item[(i)]  The characteristic triplet of $\eta$ satisfies
\begin{equation} \label{eq-Kallenberg1}
	\lambda^1(\{ s\in [0,\infty) : f(s) \neq 0\})	\cdot \liminf_{\varepsilon \downarrow 0} \varepsilon^{-2} |\ln \varepsilon|^{-1}\left( \sigma_\eta^2 + \int_{-\varepsilon}^{\varepsilon} x^2 \, \nu_\eta (\di x)\right) >\frac14,
		\end{equation}
or more generally, the  characteristic exponent $\Psi_\eta$ satisfies
\begin{equation}
\lambda^1(\{ s\in [0,\infty) : f(s) \neq 0\}) \cdot \liminf_{|z|\to \infty} \frac{-\Re (\Psi_\eta(z))}{\ln (1+|z|)} > \frac12. \label{eq-HW1}
\end{equation}
\item[(ii)] The absolutely continuous part $\nu_{\eta,{\rm{ac}}}$ of $\nu_\eta$ satisfies
$$\lambda^1(\{s\in [0,\infty): f(s) \neq 0\}) \cdot \nu_{\eta,{\rm{ac}}}(\bR) = \infty.$$
\item[(iii)] Preimages of Lebesgue nullsets $B\in \cB_1^*$  under the mapping $f$ are again Lebesgue nullsets (i.e., $\lambda^1(f^{-1} (B)) = 0$ for every $B\in \cB_1^*$ with $\lambda^1(B) = 0$), and
    $$\lambda^1(\{s\in [0,\infty): f(s) \neq 0\}) \cdot \nu_{\eta}(\bR) = \infty.$$
\item[(iv)] There is $b>0$ such that $\eta_b$ is absolutely continuous and the function $f$ is constant and different from zero on an interval of length $b$.
\end{enumerate}
\end{corollary}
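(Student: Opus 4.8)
I want to transport each hypothesis to the infinitely divisible law $\mu:=\cL\big(\int_0^\infty f(s)\,\di\eta_s\big)$, whose triplet $(\sigma_f^2,\nu_f,\gamma_f)$ and characteristic exponent $\Psi_f$ are given by \eqref{eq-LM1a}--\eqref{eq-LM2} and the lines around them, and then apply Lemma~\ref{prop-id-ac}; part~(iv) I would treat separately by a convolution argument. Throughout, write $A:=\{s\in[0,\infty):f(s)\neq0\}$, so $\ell:=\lambda^1(A)\in(0,\infty]$ by hypothesis, and for $x\neq0$ and $B\in\cB_1$ put $x^{-1}B:=\{y:xy\in B\}$. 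The common starting point for (ii) and (iii) is the image-measure identity $\int_\bR\phi\,\di\nu_f=\int_0^\infty\int_\bR\phi(f(s)x)\,\nu_\eta(\di x)\,\di s$ for nonnegative Borel $\phi$ with $\phi(0)=0$, which follows from the definition of $\nu_f$ and Tonelli.

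For (iii): if $\lambda^1(B)=0$, applying this identity with $\phi=\one_{B\setminus\{0\}}$ and integrating in $x$ first gives $\nu_f(B)=\int_{\bR^*}\lambda^1\big(f^{-1}\big(x^{-1}(B\setminus\{0\})\big)\big)\,\nu_\eta(\di x)$; each $x^{-1}(B\setminus\{0\})$ is a Lebesgue-null set in $\cB_1^*$, so by the Lusin-$(N^{-1})$ hypothesis every integrand vanishes, $\nu_f$ is absolutely continuous, and $\nu_f(\bR)=\ell\,\nu_\eta(\bR)=\infty$, so Lemma~\ref{prop-id-ac}(iii) applies. For (ii): split $\nu_\eta=\nu_{\eta,{\rm ac}}+\nu_{\eta,{\rm sing}}$ and correspondingly $\nu_f=\nu_f^{(1)}+\nu_f^{(2)}$; for each $s\in A$ the pushforward of $\nu_{\eta,{\rm ac}}$ under $x\mapsto f(s)x$ has the jointly measurable density $(s,y)\mapsto|f(s)|^{-1}q(y/f(s))$, $q$ a density of $\nu_{\eta,{\rm ac}}$, so Tonelli makes $\nu_f^{(1)}$ absolutely continuous with $\nu_f^{(1)}(\bR)=\ell\,\nu_{\eta,{\rm ac}}(\bR)=\infty$; hence the absolutely continuous part of $\nu_f$ is infinite and Lemma~\ref{prop-id-ac}(iii) again applies.

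Part (i) I would base on the scaling identity
\[
\varepsilon^{-2}\Big(\sigma_f^2+\int_{-\varepsilon}^{\varepsilon}x^2\,\nu_f(\di x)\Big)=\int_A h\big(\varepsilon/|f(s)|\big)\,\di s,\qquad h(\delta):=\delta^{-2}\Big(\sigma_\eta^2+\int_{-\delta}^{\delta}x^2\,\nu_\eta(\di x)\Big),
\]
obtained from $\sigma_f^2=\sigma_\eta^2\int_0^\infty f(s)^2\,\di s$, the formula for $\nu_f$, and factoring out $f(s)^2/\varepsilon^2$. With $K:=\liminf_{\delta\downarrow0}h(\delta)/|\ln\delta|$, hypothesis \eqref{eq-Kallenberg1} says $\ell K>\tfrac14$. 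Since $h\geq0$ I can restrict the integral to the bounded sets $A_n:=\{s\in[0,n]:|f(s)|\geq1/n\}$, whose measures increase to $\ell$; for $s\in A_n$ and small $\varepsilon$, $\varepsilon/|f(s)|\leq n\varepsilon$ is small and $h(\varepsilon/|f(s)|)\geq(K-\theta)|\ln(\varepsilon/|f(s)|)|\geq(K-\theta)(|\ln\varepsilon|-\ln n)$, so dividing by $|\ln\varepsilon|$, integrating over $A_n$, and letting $\varepsilon\downarrow0$, then $\theta\downarrow0$, then $n\to\infty$, shows the left-hand Kallenberg quantity of $\mu$ is $\geq\ell K>\tfrac14$; thus $\mu$ satisfies \eqref{eq-David1} and Lemma~\ref{prop-id-ac}(ii) finishes it. The more general \eqref{eq-HW1} is handled in the same way via $-\Re\Psi_f(z)=\int_0^\infty(-\Re\Psi_\eta(f(s)z))\,\di s$ (from \eqref{eq-LM2} and monotone convergence, as $-\Re\Psi_\eta\geq0$): with $G:=\liminf_{|u|\to\infty}(-\Re\Psi_\eta(u))/\ln(1+|u|)$, \eqref{eq-HW1} reads $\ell G>\tfrac12$, and using $(-\Re\Psi_\eta)(f(s)z)\geq(G-\theta)\ln(1+|z|/n)$ on $A_n$ for large $|z|$ shows the Hartman--Wintner quantity \eqref{eq-David2} of $\mu$ is $\geq\ell G>\tfrac12$, so Lemma~\ref{prop-id-ac}(ii) again applies.

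Finally, for (iv): let $I\subseteq[a,a+b]$ be the interval of length $b$ on which $f\equiv c\neq0$ and write $\int_0^\infty f\,\di\eta_s=Y_1+Y_2+Y_3$ with $Y_1=\int_0^a f\,\di\eta_s$, $Y_2=\int_a^{a+b}f\,\di\eta_s=c(\eta_{a+b}-\eta_a)$, $Y_3=\int_0^\infty f\,\di\eta_s-\int_0^{a+b}f\,\di\eta_s$; since $t\mapsto\int_0^t f\,\di\eta_s$ is an additive process, $Y_1,Y_2,Y_3$ are independent, while $Y_2\stackrel{d}{=}c\,\eta_b$ is absolutely continuous because $\eta_b$ is and $c\neq0$, so the law of $Y_1+Y_2+Y_3$ is a convolution with an absolutely continuous factor and hence absolutely continuous. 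The main obstacle is part~(i): naively bounding $-\Re\Psi$ with an estimate such as $1-\cos t\geq(1-\cos1)t^2$ loses a constant and only reaches the threshold $1$, not Berger's $\tfrac14$ and $\tfrac12$; the scaling identity is precisely what transports the Kallenberg (respectively Hartman--Wintner) quantity of $\eta$ to that of $\mu$ without loss, and care is needed to take the restriction to $A_n$ and the limit $\varepsilon\downarrow0$ in this order so that the $\ln n$ error is negligible against $|\ln\varepsilon|$.
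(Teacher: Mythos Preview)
Your proof is correct and for parts (ii), (iii), (iv) essentially identical to the paper's. The only substantive difference is in (i): the paper first reduces \eqref{eq-Kallenberg1} to \eqref{eq-HW1} at the level of $\eta$ (via the elementary estimate $1-\cos u\geq 2u^2/\pi^2$ for $|u|\leq\pi$), and then transports only the Hartman--Wintner condition to $\mu$ by a one-line Fatou argument applied to $\int_0^T \frac{-\Re\Psi_\eta(f(t)z)}{\ln(1+|z|)}\,\di t$. You instead transport both conditions separately via the scaling identity and the truncated sets $A_n$. Your route is more explicit but longer; the paper's Fatou reduction avoids the bookkeeping with $A_n$, $\theta$, and $\ln n$.

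One small technical point: your Kallenberg transport yields only $\liminf_{\varepsilon\downarrow0}\varepsilon^{-2}|\ln\varepsilon|^{-1}\big(\sigma_f^2+\int_{-\varepsilon}^\varepsilon x^2\,\nu_f(\di x)\big)\geq\ell K$, whereas \eqref{eq-David1} is literally stated with $\lim$. This is harmless, since the implication ``Kallenberg $\Rightarrow$ Hartman--Wintner'' works with $\liminf$ (exactly the computation the paper does at the start of its proof of (i)), so your $\liminf>1/4$ already gives \eqref{eq-David2} for $\mu$ and Lemma~\ref{prop-id-ac}(ii) applies. Alternatively, simply drop the separate Kallenberg transport and rely on your Hartman--Wintner transport, which already covers both cases once \eqref{eq-Kallenberg1}$\Rightarrow$\eqref{eq-HW1} is noted.
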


The condition that preimages of Lebesgue nullsets under a mapping are again Lebesgue nullsets is called the \emph{Lusin$(N^{-1})$ condition} in the literature. The first condition in Corollary \ref{t-ac1}~(iii) therefore means that $f_{|f^{-1}(\bR^*)} : f^{-1} (\bR^*) \to \bR^*$ satisfies the Lusin$(N^{-1})$ condition. Observe that the Lusin$(N^{-1})$-condition is trivially satisfied if there is a countable decomposition of $[0,\infty)$ into intervals of the form $[a_i,b_i)$ such that each $f_{|{(a_i,b_i)}}$ is a~$C^1$-diffeomorphism onto its range.

\begin{proof}[Proof of Corollary \ref{t-ac1}]
(i) That \eqref{eq-Kallenberg1} implies \eqref{eq-HW1} follows by standard arguments: if ${\sigma_\eta^2>0}$, then this is clear, and if $\sigma_\eta^2=0$ use
$\sin (x) \geq 2x/\pi$ for $x\in [0,\pi/2]$ so that $1-\cos(u)= 2 (\sin (u/2))^2 \geq 2 u^2/\pi^2$ for $|u|\leq \pi$ and hence
$$\frac{-\Re (\Psi_\eta(z))}{\ln (1+|z|)} = \frac{\int_\bR (1-\cos (xz)) \, \nu_\eta(\di x)}{\ln(1+|z|)} \geq 2 \frac{(z/\pi)^2 \int_{-\pi/z}^{\pi/z} u^2\, \nu_\eta(\di u)}{\ln (|z|/\pi)} \frac{\ln (|z|/\pi)}{\ln (1+|z|)} $$
    for $|z|\geq \pi$.

\medskip
    Now assume \eqref{eq-HW1}.
Since $\re^{\Re \Psi_\eta(z)} = \left| \re^{\Psi_\eta(z)}\right| \leq 1$ we have $\Re  \Psi_\eta(z) \leq 0$ for each $z\in \bR$. Let $T>0$ be arbitrary. An application of Fatou's lemma and \eqref{eq-LM2} then shows
\begin{eqnarray*}
\liminf_{|z|\to \infty} \frac{- \Re (\Psi_f (z))}{\ln (1+|z|)} & \geq & \liminf_{|z|\to \infty} \int_0^T \frac{- \Re (\Psi_\eta(f(t) z))}{\ln (1+|z|)} \, \di t \\
& \geq & \int_0^T \liminf_{|z|\to \infty} \frac{- \Re (\Psi_\eta(f(t) z))}{\ln (1+|z|)} \, \di t \\
& = & \int_0^T \one_{\{f(t) \neq 0\}} \, \di t \cdot \liminf_{|z|\to\infty} \frac{- \Re (\Psi_\eta (z))}{\ln (1+|z|)},
\end{eqnarray*}
since $\Psi_\eta(0)=0$. Letting $T\to\infty$ we conclude
$$\liminf_{|z|\to \infty} \frac{- \Re (\Psi_f (z))}{\ln (1+|z|)} \geq \lambda^1(\{ s\in [0,\infty) : f(s) \neq 0\}) \cdot \liminf_{|z|\to \infty} \frac{-\Re (\Psi_\eta(z))}{\ln (1+|z|)}> \frac12$$
by \eqref{eq-HW1}. Hence $\Psi_f$ satisfies \eqref{eq-David2} showing that $\int_0^\infty f(s) \, \di \eta_s$ has a square integrable density.

\medskip
(ii) Denoting
\begin{eqnarray*}
\nu_{f,1} (B) & := & \int_0^\infty \int_\bR \one_{B \setminus \{0\}} (f(t) x) \, \nu_{\eta,{\rm{ac}}}(\di x) \, \di t \quad \mbox{and} \\
\nu_{f,2} (B) & := &  \int_0^\infty \int_\bR \one_{B \setminus \{0\}} (f(t) x) \, \nu_{\eta,{\rm{sing}}}(\di x) \, \di t
\end{eqnarray*}
for $B\in \cB_1$ we have $\nu_f = \nu_{f,1} + \nu_{f,2}$ by \eqref{eq-LM1a}. Now if $B\in \cB_1$ has Lebesgue measure zero, then so has $B/f(s)$ for $f(s) \neq 0$, and it follows
$$\int_{\bR} \one_{B \setminus \{0\}} (f(s) x) \nu_{\eta,{\rm{ac}}}\, (\di x) = \begin{cases}
\nu_{\eta,{\rm{ac}}} ( B/f(s)) = 0, & \mbox{if $f(s) \neq 0$},\\
\int_\bR \one_{B \setminus \{0\}} (0) \, \nu_{\eta,{\rm{ac}}}(\di x)  =0 , & \mbox{if $f(s) = 0$},
\end{cases}
$$ showing that $\nu_{f,1}$ is absolutely continuous. Since $\nu_{f,1}(\bR) = \lambda^1 (\{ s\in [0,\infty): f(s) \neq 0\}) \, \nu_{\eta,{\rm{ac}}}(\bR)=\infty$ by \eqref{eq-LM1a} and assumption, it follows from Lemma \ref{prop-id-ac}~(iii) that each infinitely divisible distribution with L\'evy measure $\nu_{f,1}$ is absolutely continuous. Since $\nu_f = \nu_{f,1} + \nu_{f,2}$, each such distribution (with Gaussian variance zero) is a convolution factor of $\cL(\int_0^\infty f(s) \, \di \eta_s)$, showing that also $\int_0^\infty f(s) \, \di \eta_s$ is absolutely continuous.

\medskip
(iii) For each $B\in \cB_1^*$ with  $\lambda^1(B) = 0$, and each $x\neq 0$, the set $B/x$ is also a Lebesgue nullset and hence so is $f^{-1} (B/x)$ by the stated condition. Using Fubini's theorem and~\eqref{eq-LM1a} we then obtain that
$$\nu_f(B) = \int_\bR \int_0^\infty \one_{B\setminus \{0\}} (f(t) x) \, \di t \, \nu_\eta(\di x) =
 \int_\bR \lambda^1( f^{-1} (B/x)) \, \nu_\eta(\di x) = 0,$$
showing that $\nu_f$ is absolutely continuous. Using $\nu_f(\bR) = \infty$ by assumption the claim then follows again from Lemma \ref{prop-id-ac}~(iii).

\medskip
(iv) Let $f(x) = c \neq 0$ for $x\in (a,a+b)$. Then
$$\int_0^\infty f(s) \, \di \eta_s = \int_0^a f(s) \, \di \eta_s + c (\eta_{a+b} - \eta_a) + \int_b^\infty f(s) \, \di \eta_s.$$
The result then follows by observing that $c (\eta_{a+b} - \eta_a) \stackrel{d}{=} c \eta_{b}$ is absolutely continuous by assumption and independent of $\int_0^a f(s) \, \di \eta_s$ and  $\int_{a+b}^\infty f(s) \, \di \eta_s$.
\end{proof}

\subsection{Conditions for absolute continuity of $\int_0^R f(s) \, \di \eta_s$}
We are also interested in continuity and absolute continuity of randomly stopped functionals such as $\int_0^R f(s) \, \di \eta_s$, where $R$ is an independent time taking values in $(0,\infty)$. Observe that since we have chosen a c\`adl\`ag version of $(\int_0^t f(s) \, \di \eta_s)_{t\geq 0}$, $\int_0^R f(s)\, \di \eta_s$ is a random variable.

\begin{corollary}[Absolute continuity of $\smash{\int_0^R f(s) \, \di \eta_s}$] \label{c-ac1}
Let $f:[0,\infty) \to \bR$ be a Borel measurable deterministic function that is locally integrable with respect to $\eta$ and such that $\lambda^1(\{s\in [0,t]: f(s) \neq 0\}) > 0$ for all $t>0$. Let $R$ be a random variable with values in $(0,\infty)$ that is independent of $\eta$.
Then each of the following conditions implies that $\int_0^R f(s) \, \di \eta_s$ is absolutely continuous:
\begin{enumerate}
\item[(i)] The characteristic triplet of $\eta$ satisfies Kallenberg's condition \eqref{eq-Kallenberg5}, or more generally, the characteristic exponent $\Psi_\eta$ satisfies the Hartman--Wintner condition \eqref{eq-HW3}.
\item[(ii)] $\nu_{\eta,{\rm{ac}}} (\bR) = \infty$.
\item[(iii)] Preimages of Lebesgue nullsets $B\in \cB_1^*$ under the mapping $f$ are again Lebesgue nullsets and $\nu_\eta(\bR) = \infty$.
\item[(iv)] The function $f$ is constant and different from zero in a neighbourhood of zero and $\eta_t$ is absolutely continuous for each $t>0$.
\item[(v)] $\eta$ is of finite variation with non-zero drift, $f$ is Lebesgue almost everywhere different from zero, and $R$ is absolutely continuous with respect to Lebesgue measure.
\end{enumerate}
\end{corollary}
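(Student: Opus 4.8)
The plan is to obtain all five parts from Corollary~\ref{t-ac1} via conditioning, together with the elementary fact that a mixture, over a probability measure, of absolutely continuous laws is absolutely continuous. Write $Y_t:=\int_0^t f(s)\,\di\eta_s$ for the fixed c\`adl\`ag version. Since $Y$ is $\sigma(\eta)$-measurable and $R$ is independent of $\eta$, Fubini's theorem yields, for every $B\in\cB_1$,
\begin{equation*}
P\Bigl(\int_0^R f(s)\,\di\eta_s\in B\Bigr)=\int_{(0,\infty)}P(Y_r\in B)\,P_R(\di r)=\int P_R\bigl(Y_\cdot(g)^{-1}(B)\bigr)\,P_\eta(\di g),
\end{equation*}
where $Y_\cdot(g)$ denotes the path $r\mapsto Y_r$ associated with $\eta=g$. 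Hence it suffices to prove, in parts (i)--(iv), that $Y_r$ is absolutely continuous for each fixed $r>0$ (then the first expression vanishes on $\lambda^1$-null sets), and, in part (v), that for $P_\eta$-a.e.\ path $g$ the function $Y_\cdot(g)$ has the Lusin$(N^{-1})$ property, i.e.\ $\lambda^1(Y_\cdot(g)^{-1}(B))=0$ whenever $\lambda^1(B)=0$ (then the second expression vanishes on null sets, because $R$ is absolutely continuous).

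For parts (i)--(iv) I would fix $r>0$ and apply Corollary~\ref{t-ac1} to the compactly supported function $f\one_{[0,r]}$, using $Y_r=\int_0^\infty (f\one_{[0,r]})(s)\,\di\eta_s$ and the hypothesis $\lambda^1(\{s\in[0,r]:f(s)\neq0\})>0$. In case (i) the constants in \eqref{eq-Kallenberg5}, resp.\ \eqref{eq-HW3}, are $+\infty$, so multiplying by the positive number $\lambda^1(\{s\in[0,r]:f(s)\neq0\})$ leaves them $>1/4$, resp.\ $>1/2$, which is \eqref{eq-Kallenberg1}, resp.\ \eqref{eq-HW1}. In case (ii), $\lambda^1(\{s\in[0,r]:f(s)\neq0\})\cdot\nu_{\eta,{\rm ac}}(\bR)=\infty$. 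In case (iii), for $B\in\cB_1^*$ with $\lambda^1(B)=0$ one has $(f\one_{[0,r]})^{-1}(B)\subseteq f^{-1}(B)\cap[0,r]$, a null set by assumption, and $\lambda^1(\{s\in[0,r]:f(s)\neq0\})\cdot\nu_\eta(\bR)=\infty$. In case (iv), if $f\equiv c\neq0$ on $[0,\delta)$ take $b:=\min(r,\delta)>0$: then $\eta_b$ is absolutely continuous and $f\one_{[0,r]}$ equals the nonzero constant $c$ on the length-$b$ interval $[0,b]$. In each case Corollary~\ref{t-ac1} gives absolute continuity of $Y_r$ for every $r>0$, and the mixture argument concludes.

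Part (v) is the one part where conditioning on $R$ is useless (for deterministic $\eta$, $Y_r$ is a constant), so I would condition on $\eta$ instead. Assume $\gamma_\eta^0\neq0$. The first step is an integrability upgrade: for $\eta$ of finite variation with non-zero drift, local $\eta$-integrability of $f$ already forces $\int_0^t|f(s)|\,\di s<\infty$ and $\int_0^t\int_\bR(|f(s)x|\wedge1)\,\nu_\eta(\di x)\,\di s<\infty$ for all $t>0$; the first bound holds because the location-type function in the Rajput--Rosinski integrability criterion behaves like $\gamma_\eta^0 u$ for large $|u|$ (using $\int_{|x|\le1}|x|\,\nu_\eta(\di x)<\infty$), so it dominates $c\,|f(s)|$ on $\{|f|\ge M\}$ while $\int_{\{|f|<M\}}|f|<\infty$ trivially, and the second then follows by splitting $|x|\le1$ against $|x|>1$. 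Consequently $Y_t=\gamma_\eta^0\int_0^t f(s)\,\di s+\sum_{0<s\le t}f(s)\Delta\eta_s$ holds pathwise, so $Y$ is of finite variation on compacts and, for $P_\eta$-a.e.\ path $g$, the Lebesgue-a.e.\ derivative of $r\mapsto Y_r(g)$ equals $\gamma_\eta^0 f(r)\neq0$ for a.e.\ $r$, the pure-jump part contributing nothing. A standard Sard-type lemma --- if $h$ is differentiable at every point of a set $E$ with $h'\neq0$ there, then $\lambda^1(h^{-1}(B)\cap E)=0$ for every $\lambda^1$-null $B$, proved by stratifying $E$ according to $|h'|\ge1/k$ and the differentiability modulus, on pieces of which $h$ is bi-Lipschitz on short subintervals --- then shows that $Y_\cdot(g)$ has the Lusin$(N^{-1})$ property for $P_\eta$-a.e.\ $g$, which by the second identity above gives absolute continuity of $\int_0^R f(s)\,\di\eta_s$.

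The hardest part is (v): one must notice that the right object to condition on is the path of $\eta$ rather than $R$, and then supply the two analytic ingredients --- the integrability upgrade, which is exactly what guarantees that $Y$ is a finite-variation process whose a.e.\ derivative is the non-vanishing $\gamma_\eta^0 f$, and the Sard-type $(N^{-1})$-lemma, which converts this into the statement that $Y_r$ evaluated at the absolutely continuous $R$ is absolutely continuous. Parts (i)--(iv) are routine bookkeeping once Corollary~\ref{t-ac1} is in hand.
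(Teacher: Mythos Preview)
Your proof is correct and follows essentially the same route as the paper: for (i)--(iv) both condition on $R=r$ and invoke Corollary~\ref{t-ac1}, and for (v) both condition instead on the path of $\eta$ and exploit that $r\mapsto Y_r(g)$ has Lebesgue-a.e.\ nonzero derivative $\gamma_\eta^0 f(r)$. The only cosmetic difference is that where you supply a self-contained Sard-type $(N^{-1})$-lemma and an explicit integrability upgrade, the paper simply cites Theorem~4.2 of Davydov--Lifshits--Smorodina \cite{DavydovLifshitsSmorodina1998} for the former and passes over the latter.
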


\begin{proof}
Under each of the conditions (i) -- (iv), it follows from Corollary \ref{t-ac1} that $\int_0^t f(s) \, \di \eta_s$ is absolutely continuous for each $t>0$. Conditioning on $R=t$ then gives for each $B\in \cB_1$ with $\lambda^1(B) = 0$ that
$$P\left(\int_0^R f(s) \, \di \eta_s \in B\right) = \int_0^\infty P\left(\int_0^t f(s) \, \di \eta_s \in B \right) P_R(\di t) = 0,$$ showing absolute continuity of $\int_0^R f(s) \, \di \eta_s$.

\medskip
Now assume Condition (v) and denote by $\gamma_\eta^0$ the drift of $\eta$. By interpreting the integral as a pathwise Lebesgue--Stieltjes integral, we can condition on the paths  $(\eta_t)_{t\geq 0} = (g(t))_{t\geq 0}$. Since $f\neq 0$ Lebesgue almost everywhere and since the paths of $\eta$ are of the form $g(t) = \gamma_\eta^0 t + \sum_{0<s\leq t} \Delta g(s)$ with $\lambda^1(s\in(0,t]: \Delta g(s)\neq 0)=0$,
the functions
$H_g: (0,\infty) \to \bR, \quad t\mapsto \int_0^t f(s) \, \di g(s) = \gamma_\eta^0 \int_0^t f(s) \, \di s + \sum_{0<s\leq t} f(s) \Delta g(s)$ are Lebesgue almost everywhere differentiable with derivatives $f(t) \gamma_\eta^0 \neq 0$.  Theorem 4.2 in Davydov et al. \cite{DavydovLifshitsSmorodina1998} shows that the image measure $H_g(\lambda^1)$ is absolutely continuous, for $P_\eta$ almost every path $g$ of $\eta$.
For a Borel set $B$ with $\lambda^1(B) = 0$ we then have $\lambda^1(H_g^{-1} (B)) = 0$ and by absolute continuity of $R$ that $P(H_g(R) \in B) = P (R\in H_g^{-1} (B)) = 0$. Absolute continuity of $\int_0^R f(t) \, \di \eta_t$ then follows from
\begin{eqnarray*}
P\left( \int_0^R f(s) \, \di \eta_s  \in B\right) & = & \int_{D([0,\infty),\bR)} P\left( \int_0^R f(s) \, \di \eta_s \in B \Big| \eta = g\right) \, P_\eta(\di g)   \\
& = & \int_{D([0,\infty),\bR)} P (H_g(R) \in B) \, P_\eta(\di g) = 0
\end{eqnarray*}
for all $B\in \cB_1$ with $\lambda^1(B) = 0$.
\end{proof}


\subsection{Conditions for (absolute) continuity of $\int_0^t \re^{-\xi_{s-}} \, \di \eta_s$} \label{S-t-fixed}

In this section we give sufficient conditions for absolute continuity of $V_{\xi,\eta}(t) := \int_0^t \re^{-\xi_{s-}} \, \di \eta_s$ for fixed $t>0$, where $\xi$ and $\eta$ are independent L\'evy processes. We also characterise in Corollary \ref{char-continuous} when $V_{\xi,\eta}(t)$ is continuous. We start with an example which is due to Lifshits~\cite{Lifshits1982}.

\begin{example} \label{ex-Lifshits}
{\rm Let $\xi$ be a Brownian motion with variance $\sigma_\xi^2>0$ and drift $\gamma_\xi^0 \in \bR$. Then~$\smash{\int_0^t \re^{-\xi_s} \, \di s}$ is absolutely continuous for each $t>0$. This is stated in Problem 9.1 of \cite[p.~53]{DavydovLifshitsSmorodina1998}, but can also be deduced from Theorem 1 in \cite{Lifshits1982}, which shows that random variables of the form $\int_0^t g(\xi_s) \, \di s$ are absolutely continuous, provided $g$ is locally Lipschitz with derivative $g'$ that is non-zero and continuous on some set of full Lebesgue measure, and the autocovariance function $k(s,s') = {\rm{Cov}} (\xi_s,\xi_{s'})$ satisfies the non-degeneracy condition $\int_0^t \int_0^t k(s,s') h(s) h(s') \, \di s \, \di s'>0$ for any Lebesgue integrable function $h:[0,t] \to \bR$ that is not almost everywhere equal to zero. The condition on $g$ is clearly satisfied for $g(x) = \re^{-x}$, and the non-degeneracy condition is equivalent to the fact that $\int_0^t \xi_s h(s)\, \di s$ is not constant for each integrable $h$ that is not almost everywhere equal to zero. To see that the latter condition is satisfied, we can assume without loss of generality that $\sigma_\xi^2=1$ and $\gamma_\xi^0 = 0$. If then $\smash{\int_0^t \xi_s h(s)\, \di s}$ is constant for some integrable function $h$, it must necessarily be equal to its expectation which is zero. Denoting $H(s) = \int_0^s h(u) \, \di u$ and using partial integration we conclude
$$0 = \int_0^t \xi_s \, \di H(s) = \xi_t H(t) - \int_0^t H(s) \, \di \xi_s= \int_0^t (H(t) - H(s)) \, \di \xi_s,$$
where we used that the quadratic covariation of $H$ and $\xi$ is zero ($H$ being of finite variation). Taking the variance of this we see that $\int_0^t (H(t) - H(s))^2 \, \di s = 0$, showing that~$H$ is constant, which in turn implies that $h$ is almost everywhere equal to zero. Hence the non-degeneracy condition is satisfied and \cite[Thm. 1]{Lifshits1982} gives absolute continuity of $\int_0^t \re^{-\xi_s} \, \di s$.}
\end{example}

Although we will not be able to characterise absolute continuity of $V_{\xi,\eta}(t)$ completely, we will give various sufficient conditions that cover many cases of interest in the next theorem.

\begin{theorem}[Sufficient conditions for absolute continuity of $V_{\xi,\eta}(t)$] \label{t-ac2}
	Let $t\in(0,\infty)$ be fixed and assume   that one of the following conditions  is satisfied:
	\begin{enumerate}
		\item[(i)] The characteristic triplet of $\eta$ satisfies
\begin{equation} \label{eq-Kallenberg10}
		\liminf_{\varepsilon \downarrow 0} \varepsilon^{-2} |\ln \varepsilon|^{-1}\left( \sigma^2_\eta + \int_{-\varepsilon}^\varepsilon x^2 \, \nu_\eta(d x)\right) > \frac{1}{4t},
		\end{equation}
or more generally
\begin{equation} \label{eq-HW10}
\liminf_{|z|\to \infty} \frac{- \Re (\Psi_\eta(z))}{\ln (1+|z|)} > \frac{1}{2t}.
\end{equation}
In particular, this is satisfied when $\sigma_\eta^2 > 0$.
\item[(ii)] The absolutely continuous part of $\nu_\eta$ is infinite: $\nu_{\eta,{\rm{ac}}} (\bR) = +\infty$.
\item[(iii)] The characteristic exponent $\Psi_\xi$ of $\xi$ satisfies Condition \eqref{eq-Hawkes} and ${\nu_\eta(\bR) = \infty}$.
\item[(iv)] $\nu_{\xi,{\rm ac}}(\bR) = \nu_\eta(\bR) = \infty$.
\item[(v)] $\eta$ is of finite variation with  non-zero drift and $\xi$ is such that $\sigma_\xi^2>0$ or $\nu_\xi(\bR) = \infty$.
\item[(vi)] $\xi$ is a compound Poisson process and $\eta_s$ is absolutely continuous for all $s>0$.
\end{enumerate}
	Then $\int_0^t \re^{-\xi_{s-}} \, d\eta_s$ is absolutely continuous.
\end{theorem}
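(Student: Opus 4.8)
The plan is to reduce each case, by conditioning on the path of \emph{one} of the two processes, to a statement about an integral of a \emph{deterministic} function against a L\'evy process, and then to invoke the infinitely divisible continuity results of Section~\ref{S5} (Lemma~\ref{prop-id-ac}, Corollary~\ref{t-ac1}), applied over the bounded horizon $[0,t]$, where the characteristic triplet of $\int_0^t g(s)\,\di\eta_s$ is given by \eqref{eq-LM7}--\eqref{eq-LM5}. For (i)--(iii) I would condition on $\xi=f$; since $f$ is c\`adl\`ag on the compact interval $[0,t]$ it is bounded, so $g(s):=\re^{-f(s-)}$ is bounded, strictly positive, and $\lambda^1(\{s\in[0,t]:g(s)\neq0\})=t$. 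For (i) (the Kallenberg-type condition \eqref{eq-Kallenberg10} implying \eqref{eq-HW10} by the usual estimate) a Fatou argument exactly as in the proof of Corollary~\ref{t-ac1}(i) gives $\liminf_{|z|\to\infty}\frac{-\Re\Psi_{g,t}(z)}{\ln(1+|z|)}\geq t\cdot\liminf_{|z|\to\infty}\frac{-\Re\Psi_\eta(z)}{\ln(1+|z|)}>\tfrac12$, so Lemma~\ref{prop-id-ac}(ii) applies to the conditional law. For (ii) the absolutely continuous part of $\nu_{g,t}$ has total mass $t\,\nu_{\eta,\mathrm{ac}}(\bR)=\infty$, and Lemma~\ref{prop-id-ac}(iii) applies. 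For (iii) Hawkes' theorem (through \eqref{eq-Hawkes}) gives that $f:[0,t]\to\bR$ satisfies the Lusin$(N^{-1})$ condition for $P_\xi$-a.e.\ $f$; since $s\mapsto\re^{-f(s-)}$ differs from $s\mapsto\re^{-f(s)}$ only on the countable jump set of $f$, also $g$ satisfies Lusin$(N^{-1})$, so $\nu_{g,t}$ is absolutely continuous with infinite total mass ($\nu_\eta(\bR)=\infty$) and Lemma~\ref{prop-id-ac}(iii) applies once more. In each case, integrating the conditional absolute continuity over $P_\xi$ finishes the proof.

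For (iv) I would truncate the jumps of $\xi$. Fix $\delta>0$ small enough that $\lambda_\delta:=\nu_{\xi,\mathrm{ac}}(\{|x|>\delta\})\in(0,\infty)$ and split the jump point process of $\xi$ to write $\xi=\zeta_\delta+\xi^\dagger_\delta$, where $\zeta_\delta$ is a compound Poisson process with the finite, absolutely continuous L\'evy measure $\nu_{\xi,\mathrm{ac}}|_{\{|x|>\delta\}}$, independent of $\xi^\dagger_\delta$. Let $S_\delta\overset{d}=\mathrm{Exp}(\lambda_\delta)$ be its first jump time and $K_\delta$ the corresponding jump size, which is absolutely continuous and independent of $S_\delta$, of $\xi^\dagger_\delta$, of $\eta$, and of the later jumps of $\zeta_\delta$. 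Since $\zeta_{\delta,s-}=0$ for $s\le S_\delta$ and $\re^{-\xi_{s-}}=\re^{-K_\delta}\re^{-(\xi^\dagger_{\delta,s-}+\text{later }\zeta_\delta\text{-jumps})}$ for $s>S_\delta$, one gets on $\{S_\delta<t\}$
\begin{equation*}
\int_0^t \re^{-\xi_{s-}}\,\di\eta_s=A_\delta+\re^{-K_\delta}C_\delta ,
\end{equation*}
where $A_\delta$ and $C_\delta$ are measurable functions of $\eta$, $\xi^\dagger_\delta$, $S_\delta$ and the jumps of $\zeta_\delta$ after $S_\delta$, hence independent of $K_\delta$; moreover, conditionally on these, $C_\delta$ is an integral of a strictly positive deterministic function against $\eta$ over an interval of positive length, so its law is continuous because $\nu_\eta(\bR)=\infty$ (\cite[Thm.~27.4]{Sato2013}), whence $C_\delta\neq0$ a.s.\ on $\{S_\delta<t\}$. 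Consequently $A_\delta+\re^{-K_\delta}C_\delta$, being conditionally on $(A_\delta,C_\delta)$ a nondegenerate affine image of the absolutely continuous variable $\re^{-K_\delta}$, is absolutely continuous on $\{S_\delta<t\}$, so for every Borel set $B$ with $\lambda^1(B)=0$ we obtain $P(V_{\xi,\eta}(t)\in B)=P(V_{\xi,\eta}(t)\in B,\ S_\delta\ge t)\le e^{-\lambda_\delta t}$. Since $\nu_{\xi,\mathrm{ac}}(\bR)=\infty$ forces $\lambda_\delta\to\infty$ as $\delta\downarrow0$, letting $\delta\downarrow0$ gives $P(V_{\xi,\eta}(t)\in B)=0$. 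Case (vi) is the finite-horizon analogue of Corollary~\ref{t-ac1}(iv): conditioning on $\xi=f$, the path $f$ is a step function, so $\re^{-f(s-)}$ is identically $1$ on $[0,T_1\wedge t)$, where $T_1>0$ is the first jump time of $\xi$; hence $\int_0^t \re^{-f(s-)}\,\di\eta_s=\eta_{T_1\wedge t}+\int_{T_1\wedge t}^{t}\re^{-f(s-)}\,\di\eta_s$ is a sum of two independent terms of which the first is absolutely continuous because $\eta_b$ is absolutely continuous for all $b>0$, and integrating over $P_\xi$ finishes.

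Case (v) is the step I expect to be the main obstacle, since it cannot be reduced to the deterministic-integrand results by conditioning on $\xi$ (under its hypotheses $\re^{-\xi_{s-}}$ is generically not piecewise constant, so Corollary~\ref{t-ac1}(iv) does not apply, and Corollary~\ref{t-ac1} has no clause for $\eta$ of finite variation with non-zero drift). Instead I would condition on the path $\eta=g$; since $\eta$ has finite variation, $V_{\xi,\eta}(t)=\gamma_\eta^0\int_0^t\re^{-\xi_s}\,\di s+\sum_{0<u\le t}\re^{-\xi_{u-}}\Delta g(u)$ becomes a functional $\Phi_g(\xi)$ of the path of $\xi$ in which the first term is genuinely present because $\gamma_\eta^0\neq0$, and one must show $\Phi_g(\xi)$ is absolutely continuous using only that $\xi$ is nondegenerate ($\sigma_\xi^2>0$ or $\nu_\xi(\bR)=\infty$). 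When $\sigma_\xi^2>0$ I would write $\xi=B+\xi'$ with $B$ a Brownian motion, condition on $\xi'$, and apply a Cameron--Martin quasi-invariance / stochastic calculus of variations criterion in the spirit of Lifshits~\cite{Lifshits1982} and Davydov et al.~\cite{DavydovLifshitsSmorodina1998}: perturbing $B\mapsto B+\varepsilon\psi$ makes $\Phi_g$ differentiable with derivative $-\gamma_\eta^0\int_0^t\psi(s)\re^{-\xi_s}\,\di s-\sum_u\psi(u)\re^{-\xi_{u-}}\Delta g(u)$, and since $\gamma_\eta^0\re^{-\xi_s}\,\di s$ is a non-zero absolutely continuous measure, for a.e.\ $\xi$ there is a Cameron--Martin direction $\psi$ along which this derivative is non-zero, which yields absolute continuity. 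When $\sigma_\xi^2=0$ but $\nu_\xi(\bR)=\infty$ one needs the corresponding jump-perturbation argument exploiting the infinitely many jumps of $\xi$; carrying this out rigorously — in particular handling the possibly countably many jump terms coming from $g$ and the case $\nu_\xi(\bR)=\infty$ — is the technical heart of the proof.
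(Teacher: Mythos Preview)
Your treatment of (i)--(iii), (iv) and (vi) is correct and essentially identical to the paper's: condition on $\xi=f$ and invoke Corollary~\ref{t-ac1} for (i)--(iii) and (vi), and for (iv) isolate one absolutely continuous jump $K_\delta$ of $\xi$, write the integral as $A_\delta+\re^{-K_\delta}C_\delta$ with $C_\delta\neq 0$ a.s.\ (via continuity of the conditional integral using $\nu_\eta(\bR)=\infty$), and let the truncation level go to~$0$.

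The gap is in (v), where your outline remains programmatic and the paper uses two concrete, more elementary devices that avoid Malliavin/Cameron--Martin machinery altogether. For $\nu_\xi(\bR)=\infty$ the paper does \emph{not} perturb jump sizes; instead, for $\varepsilon>0$ it lets $T_1(\varepsilon)<T_2(\varepsilon)$ be the first two jump times of $\xi$ with $|\Delta\xi|>\varepsilon$, conditions on $\{T_2(\varepsilon)\le t\}$ and on \emph{everything} except $T_1(\varepsilon)$ (including $\eta=g$, the reduced path $\xi'$, the jump sizes, and $T_i(\varepsilon)$ for $i\ge 2$), and uses that $T_1(\varepsilon)$ is then uniform on $(0,T_2(\varepsilon))$. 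The map
\[
H(u)=\int_0^u \re^{-f(s-)}\,\di g(s)+\int_{u+}^{T_2(\varepsilon)} \re^{-y_1-f(s-)}\,\di g(s)
\]
is a.e.\ differentiable with derivative $\gamma_\eta^0\,\re^{-f(u-)}(1-\re^{-y_1})\neq 0$ (here the finite-variation form of $g$ and $\gamma_\eta^0\neq 0$ enter), so $H(T_1(\varepsilon))$ is absolutely continuous by \cite[Thm.~4.2]{DavydovLifshitsSmorodina1998}; finally let $\varepsilon\downarrow 0$. For $\sigma_\xi^2>0$ and $\nu_\xi(\bR)<\infty$ the paper first uses (iii) (via $\sigma_\xi^2>0$ $\Rightarrow$ \eqref{eq-Hawkes}) to reduce to $\nu_\eta(\bR)<\infty$, then lets $M<t$ be the \emph{last} jump time of $\xi$ or $\eta$ before $t$ and conditions on $M=m$ and on the paths up to $m$; on $(m,t]$ there are no jumps, so the remaining contribution is $\gamma_\eta^0\,\re^{-f(m)}\int_{m+}^t \re^{-(\xi_{s-}-\xi_m)}\,\di s$ with $(\xi_{s}-\xi_m)_{s\in(m,t]}$ a Brownian motion with drift, and Example~\ref{ex-Lifshits} gives absolute continuity. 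Both steps are short and sidestep the difficulties you identified (infinitely many jump terms from $g$, and a rigorous ``jump-perturbation'' argument).
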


\begin{proof}
(i) -- (iii), (vi): Conditioning on the paths $\xi=f$, we obtain for any Borel set $B$ with $\lambda^1(B) = 0$ that
\begin{equation} \label{eq-ac-3}
P\left( \int_0^t \re^{-\xi_{s-}} \, d\eta_s \in B \right) = \int_{D([0,\infty),\bR)} P\left( \int_0^t \re^{-f(s-)} \, d\eta_s \in B\right) \, P_\xi(df).
\end{equation}
Hence absolute continuity of $V_{\xi,\eta}(t)$ will follow if one can show that $P_\xi$-a.s., $\smash{\int_0^t \re^{-f(s-)} \, \di \eta_s}$ is absolutely continuous. But since $\lambda^1( \{ s\in [0,t] : \re^{-f(s-)} \neq 0\}) = t$, this follows from Corollary \ref{t-ac1}~(i)--(iv) and the discussion preceeding Example \ref{ex-Hawkes} regarding the Condition~\eqref{eq-Hawkes} for (iii). For (vi), observe that each path of a compound Poisson process is constant in a neighbourhood of zero.

\medskip
(v) Assume first that $\nu_\xi(\bR) = \infty$. The proof is similar to the proof given in \cite[Thm.~3.9~(b)]{BertoinLindnerMaller2008}, but we give the argument here since that theorem is not directly applicable to our situation. For given $\varepsilon>0$, denote the time of the $i$'th jump of $\xi$ with absolute jump size greater than $\varepsilon$ by $T_i(\varepsilon)$, $i\in \bN$. Define the process $\xi'$ by $\xi_s' = \xi_s  - \sum_{0<u\leq s, |\Delta \xi_u| > \varepsilon} \Delta \xi_u$. Then $\eta$, $\xi'$, $(T_i(\varepsilon))_{i\in \bN}$ and $(\Delta \xi_{T_i(\varepsilon)})_{i\in \bN}$ are all independent by the L\'evy-It\^o decomposition. Now condition first on the set $\{T_2(\varepsilon) \leq t\}$ and then on all quantities present apart the time $T_1(\varepsilon)$ of the first jump of $\xi$ (i.e., condition on $\eta=g$, $\xi'=f$, $T_i(\varepsilon) = t_i$, $i\geq 2$ and $\Delta \xi_{T_i} = y_i$, $i\in \bN$). Conditional on this set and these quantities, we have
\begin{align*}\lefteqn{\int_0^t \re^{-\xi_{s-}} \, \di \eta_s } \\ &= \int_0^{T_1(\varepsilon)} \re^{-f(s-)} \di g(s) + \int_{T_1(\varepsilon)+}^{T_2(\varepsilon)} \re^{-y_1 - f(s-)} \, \di g(s) + \int_{T_2(\varepsilon)+}^t \re^{-h(s-)} \, \di g(s),\end{align*}
where the function $h$ corresponds (on $\{s> T_2(\varepsilon)\}$) to the path of $(\xi_s)_{s> T_2(\varepsilon)}$, which is known under this conditioning. Then $\int_{T_2(\varepsilon)+}^t \re^{-h(s-)} \, \di g(s)$ is constant and since $g$ is of the form $g(s) = \gamma_\eta^0 s + \sum_{0<u\leq s} \Delta g(u)$, where $\gamma_\eta^0$ denotes the drift of $\eta$, the function
$$H: (0,T_2(\varepsilon)] \ni u \mapsto \int_0^u \re^{-f(s-)} \, \di g(s) + \int_{u+}^{T_2(\varepsilon)} \re^{-y_1 - f(s-)} \, \di g(s)$$
is Lebesgue almost everywhere differentiable in $u$ with derivative $\re^{-f(u-)} \gamma_\eta^0 - \re^{-y_1 -f(u-)} \gamma_\eta^0$. Since $y_1\neq 0\neq \gamma_\eta^0$, this derivative is Lebesgue almost everywhere different from zero.
By Theorem 4.2 in Davydov et al. \cite{DavydovLifshitsSmorodina1998}, the image measure under $H$ of the Lebesgue measure on $(0,T_2(\varepsilon)]$ is absolutely continuous, hence so is $H(T_1(\varepsilon))$ since $T_1(\varepsilon)$ is uniformly distributed on $(0,T_2(\varepsilon))$ (e.g., \cite[Prop. 3.4]{Sato2013}). But this shows that
\begin{align*}P&\left( \int_0^t\re^{-\xi_{s-}} \, \di \eta_s \in B \Big| T_2(\varepsilon) \leq t, \eta=g, \xi'=f, T_i=t_i \, (i\geq 2), \, \Delta \xi_{T_i} = y_i \,(i\in \bN) \right)\\ & = 0 \end{align*}
for all Borel sets $B$ with $\lambda_1(B) = 0$. Integrating all conditions out apart from $\{T_2(\varepsilon) \leq  t\}$, we conclude that $P(\smash{\int_0^t\re^{-\xi_{s-}} \, \di \eta_s} \in B | T_2(\varepsilon) \leq t) = 0$. Letting $\varepsilon\downarrow 0$ and observing that $P(T_2(\varepsilon) \leq t) \to 1$ as $\varepsilon\downarrow 0$ as a consequence of $\nu_\xi(\bR) = +\infty$, we conclude $P(\smash{\int_0^t \re^{-\xi_{s-}} \, \di \eta_s} \in B) = 0$ and hence absolute continuity of $V_{\xi,\eta}(t)$.

\medskip
Now assume that $\nu_\xi(\bR) < \infty$ and $\sigma_\xi^2>0$. Since $\xi$ satisfies Condition \eqref{eq-Hawkes}, by (iii) we can additionally assume that $\nu_\eta(\bR) < \infty$. Denote by $M$ the time of the last jump of $\xi$ or $\eta$ before time $t$, i.e., the last time such that neither $\xi$ nor $\eta$ jumps in $(M,t]$ (if no jump occurs, then $M=0$). Then $M<t$ a.s., and conditional on $M=m$, $(\xi_s)_{s\in [0,m]} = (f(s))_{s\in [0,m]}$  and $(\eta_s)_{s\in [0,m]} = (g(s))_{s\in [0,m]}$, we have
$$\int_0^t \re^{-\xi_{s-}} \, \di \eta_s = \int_0^m \re^{-f(s-)} \, \di g(s) + \gamma_\eta^0 \re^{-f(m)} \int_{m+}^t \re^{-(\xi_{s-} - \xi_m}) \, \di s.$$
But the first term is constant and $(\xi_{s-} - \xi_m)_{s\in (m,t]}$ is a Brownian motion with drift $\gamma_\xi^0$ under this conditioning, hence the second term is absolutely continuous by Example \ref{ex-Lifshits}. Hence $V_{\xi,\eta}(t)$ is absolutely continuous under this conditioning, and integrating the conditions out we see that $V_{\xi,\eta}(t)$ is absolutely continuous.

\medskip
(iv)  Choose a set $D\in \cB_1$ with $\nu_{\xi,{\rm{sing}}}(D) = \nu_{\xi,{\rm{ac}}}(\bR \setminus D) = 0$. For each $\varepsilon \in (0,1)$, denote by $R_\varepsilon$ the time of the first jump of $\xi$ with jump size in $D\cap ((-1,-\varepsilon) \cup (\varepsilon, 1))$, and by $Y_\varepsilon$ its jump size. On the set $\{R_\varepsilon < t\}$ we can write
		\begin{eqnarray*}
			\int_0^t \re^{-\xi_{s-}} \, \dx\eta_s
			& = & \int_0^{R_\varepsilon} \re^{-\xi_{s-}} \, \dx\eta_s + \re^{-Y_\varepsilon} \left( \re^{-\xi_{R_\varepsilon-}} \int_{R_\varepsilon+}^t \re^{-(\xi_{s-} - \xi_{R_\varepsilon})} \, \dx\eta_s \right).		\end{eqnarray*}
		Clearly $\re^{-Y_\varepsilon}$ is independent from $(\int_0^{R_\varepsilon} \re^{-\xi_{s-}} \, \dx\eta_s, \re^{-\xi_{R_\varepsilon-}} \int_{R_\varepsilon+}^t \re^{-(\xi_{s-} - \xi_{R_\varepsilon})} \, \dx\eta_s)$. Further, conditioning on $\xi=f$ and $R_\varepsilon$, we see from Proposition \ref{p-cont1} that  $\int_{R_\varepsilon+}^t \re^{-(f(s-) - f(R_\varepsilon))} \, \dx\eta_s$ has no atoms, i.e.,
$$P\left( \int_{R_{\varepsilon}+}^t \re^{-(f(s-) - f(R_\varepsilon))} \,\di \eta_s =b \Big| R_\varepsilon = r \right) = 0$$
for all $b\in \bR$ and $r\in (0,t)$.
Integrating out the condition we see similarly to \eqref{eq-ac-3} that also $\int_{R_\varepsilon+}^t \re^{-(\xi_{s-} - \xi_{R_\varepsilon})} \,\dx\eta_s$ has no atoms when conditioned on the set $\{R_\varepsilon < t\}$. In particular,
		$ \re^{-\xi_{R_\varepsilon-}} \int_{R_\varepsilon+}^t \re^{-(\xi_{s-} - \xi_{R_\varepsilon})} \, \dx\eta_s \neq 0$ a.s. on $\{R_\varepsilon < t\}$. Conditioning on $(\int_0^{R_\varepsilon} \re^{-\xi_{s-}} \, \dx\eta_s,$ $ \re^{-\xi_{R_\varepsilon-}} \int_{R_\varepsilon+}^t \re^{-(\xi_{s-} - \xi_{R_\varepsilon})} \, \dx\eta_s) = (h_1,h_2)$ and observing that $e^{-Y_\varepsilon}$ is absolutely continuous, we see that $h_1 + \re^{-Y_\varepsilon} h_2$ is absolutely continuous. Integrating out $h_1$ and $h_2$, it follows that $\int_0^t \re^{-\xi_{s-}} \, \dx\eta_s$ is absolutely continuous on $\{R_\varepsilon < t\}$ (similar to the proof of (v)). Letting $\varepsilon \downarrow 0$ it follows that $\int_0^t \re^{-\xi_{s-}} \, d\eta_s$ is absolutely continuous since $P(\{R_\varepsilon < t\}) \to 1$, again similar to the proof of (v).
\end{proof}

When $\sigma_\xi^2>0$, we obtain in particular:

\begin{corollary} \label{c-ac2}
Let $\sigma_\xi^2>0$. Then $V_{\xi,\eta}(t)$ is absolutely continuous if and only if $\eta$ is neither the zero process nor a compound Poisson process.
\end{corollary}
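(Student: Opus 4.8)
The plan is to deduce both implications of Corollary \ref{c-ac2} from Theorem \ref{t-ac2}, supplemented by one elementary observation for the easy direction. First I would dispatch the ``only if'' direction directly. Suppose $\eta$ is the zero process or a compound Poisson process, and consider the event $A:=\{\eta_s=0\text{ for all }s\in[0,t]\}$. If $\eta\equiv 0$ then $P(A)=1$; if $\eta$ is compound Poisson then $\eta$ has zero Gaussian part and zero drift, so $A$ coincides with the event that $\eta$ has no jump in $[0,t]$, whence $P(A)=\re^{-\nu_\eta(\bR)t}>0$. In either case $P(A)>0$, and on $A$ we have $V_{\xi,\eta}(t)=\int_0^t\re^{-\xi_{s-}}\,\di\eta_s=0$; hence $P(V_{\xi,\eta}(t)=0)>0$, so $V_{\xi,\eta}(t)$ has an atom at $0$ and cannot be absolutely continuous. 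Note this part does not use $\sigma_\xi^2>0$.

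For the ``if'' direction, assume $\eta$ is neither the zero process nor a compound Poisson process. By the L\'evy--Khintchine classification its triplet $(\sigma_\eta^2,\nu_\eta,\gamma_\eta)$ falls into exactly one of three cases, and in each I invoke the matching part of Theorem \ref{t-ac2}. (a) If $\sigma_\eta^2>0$, absolute continuity of $V_{\xi,\eta}(t)$ follows from part (i) (its ``in particular'' clause). (b) If $\sigma_\eta^2=0$ and $\nu_\eta(\bR)=\infty$, then since $\sigma_\xi^2>0$ the process $\xi$ satisfies Condition \eqref{eq-Hawkes} by Example \ref{ex-Hawkes}, so part (iii) applies. (c) In the only remaining case $\sigma_\eta^2=0$ and $\nu_\eta(\bR)<\infty$; then $\eta$ is of finite variation (as $\int_{[-1,1]}|x|\,\nu_\eta(\di x)\le\nu_\eta(\bR)<\infty$), and its drift $\gamma_\eta^0$ must be non-zero, for otherwise $\eta$ would be a compound Poisson process or the zero process, contrary to assumption; since moreover $\sigma_\xi^2>0$, part (v) applies. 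This exhausts all possibilities and yields the claim.

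There is no real obstacle in the argument; the only points that deserve care are the exhaustiveness and disjointness of the trichotomy in the ``if'' direction, and the observation that the hypothesis $\sigma_\xi^2>0$ supplies precisely the two conditions on $\xi$ needed to call on parts (iii) (via \eqref{eq-Hawkes}) and (v) of Theorem \ref{t-ac2}. The ``only if'' direction needs nothing beyond the fact that a compound Poisson (or zero) process $\eta$ vanishes identically on $[0,t]$ with positive probability, forcing an atom of $V_{\xi,\eta}(t)$ at the origin.
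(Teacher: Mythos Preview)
Your proof is correct and follows essentially the same approach as the paper: the ``only if'' direction observes that a compound Poisson or zero process $\eta$ yields an atom of $V_{\xi,\eta}(t)$ at zero, and the ``if'' direction splits into the same three cases ($\sigma_\eta^2>0$, $\nu_\eta(\bR)=\infty$, or finite variation with non-zero drift) handled by Theorem~\ref{t-ac2}~(i), (iii) via Example~\ref{ex-Hawkes}, and (v), respectively. Your write-up is simply more explicit about the exhaustiveness of the trichotomy and about where the hypothesis $\sigma_\xi^2>0$ enters.
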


\begin{proof}
If $\eta$ is a compound Poisson process, the probability that $\eta$ does not jump before time $t$ is positive, and on this set $V_{\xi,\eta}(t) = 0$, hence $V_{\xi,\eta}(t)$ has an atom at zero and hence is not absolutely continuous. Similarly, if $\eta$ is the zero process, then $V_{\xi,\eta}(t) = 0$. Otherwise, $\eta$ is of finite variation with non-zero drift, or satisfies $\nu_\eta(\bR) = \infty$ or $\sigma_\eta^2>0$. It then follows from Theorem \ref{t-ac2}~(v),(iii),(i) and Example \ref{ex-Hawkes} that $V_{\xi,\eta}(t)$ is absolutely continuous.
\end{proof}

\begin{remark}\label{rem-ext}
{\rm Similar to Remark \ref{rem-ext-new}, conditions (i)-(iii) and (vi) of Theorem \ref{t-ac2} ensure absolute continuity for functionals of the form $\int_0^t g(\xi_{s-}) \, \di \eta_s$ for more general functions $g$ than the exponential function. To be more precise, let $g:[0,t] \to \bR$ be continuous, such that $s\mapsto g(\xi_{s-})$ is c\`agl\`ad.\\
(i) Assume that  $g(0) \neq 0$ and that  Condition (vi) of Theorem \ref{t-ac2} is satisfied.  Then $\smash{\int_0^t g(\xi_{s-}) \, \di \eta_s}$ is absolutely continuous by the same proof.\\
(ii) Assume that one of the conditions (i) - (iii) of Theorem \ref{t-ac2} is satisfied, denote by $N$ the zero set of $g$, and assume \eqref{eq-g-N}, i.e., $\smash{\int_0^t P(\xi_{s} \in N) \, \di s = 0}$. Then, using Fubini's theorem, we conclude that $E \int_0^t \one_N(\xi_{s}) \, \di s=0$, showing that $\lambda^1(\{ s\in [0,t]: \xi_s \in N\}) = \lambda^1 (\{s\in [0,t] : g(\xi_s) = 0\}) = 0$ a.s. The result then follows by the same proof as in Theorem \ref{t-ac2}~(i)--(iii) and observing that $\int_0^t g(\xi_{s-}) \, \di \eta_s$ and $\int_0^t g(\xi_{s}) \, \di \eta_s$ are a.s. equal.}
\end{remark}

We can now characterise continuity of $V_{\xi,\eta}(t)$.

\begin{corollary}[Continuity of $V_{\xi,\eta}(t)$] \label{char-continuous}
$V_{\xi,\eta}(t)$ has atoms (i.e., is not continuous) if and only if $\eta$ is the zero process, or $\eta$ is a compound Poisson process, or
\begin{equation} \label{eq-cont1}
\sigma_\eta^2 = \sigma_\xi^2 = 0,\quad \nu_\eta(\bR) < \infty \quad \mbox{and} \quad  \nu_\xi(\bR) < \infty.
\end{equation}
\end{corollary}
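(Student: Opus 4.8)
The plan is to derive both implications from Theorem~\ref{t-ac2} together with the elementary fact (\cite[Thm.~27.4]{Sato2013}) that an infinitely divisible law has no atoms as soon as its Gaussian variance is positive or its L\'evy measure is infinite. The key structural observation is that ``$\eta$ is neither the zero process nor a compound Poisson process'' splits into exactly three cases: (a) $\sigma_\eta^2>0$; (b) $\nu_\eta(\bR)=\infty$; (c) $\sigma_\eta^2=0$, $\nu_\eta(\bR)<\infty$ and the drift $\gamma_\eta^0\neq0$; and that when \eqref{eq-cont1} additionally fails one always lands in a situation already covered by Theorem~\ref{t-ac2}.

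For the ``only if'' direction I would argue the contrapositive: assuming $\eta$ is not the zero process, not a compound Poisson process, and \eqref{eq-cont1} fails, I would show $V_{\xi,\eta}(t)$ has no atoms. In case (a), absolute continuity (hence continuity) is immediate from Theorem~\ref{t-ac2}(i). In case (b), I would condition on the path $\xi=f$: by \eqref{eq-LM1} the conditional law $\cL\bigl(\int_0^t \re^{-f(s-)}\,\di\eta_s\bigr)$ is infinitely divisible with L\'evy measure of total mass $t\cdot\nu_\eta(\bR)=\infty$ (since $\re^{-f(s-)}\neq0$ for every $s$), hence atomless by \cite[Thm.~27.4]{Sato2013}; integrating out $f$ exactly as in \eqref{eq-ac-3} transfers this to $V_{\xi,\eta}(t)$. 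In case (c), since $\sigma_\eta^2=0$ and $\nu_\eta(\bR)<\infty$ already hold, the failure of \eqref{eq-cont1} forces $\sigma_\xi^2>0$ or $\nu_\xi(\bR)=\infty$; as a finite L\'evy measure together with $\sigma_\eta^2=0$ makes $\eta$ of finite variation, and its drift is non-zero, this is precisely the hypothesis of Theorem~\ref{t-ac2}(v), which yields absolute continuity.

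For the ``if'' direction I would exhibit an atom explicitly in each listed case. If $\eta\equiv0$ then $V_{\xi,\eta}(t)=0$; if $\eta$ is compound Poisson, then on the positive-probability event that $\eta$ has no jump in $[0,t]$ one again has $V_{\xi,\eta}(t)=0$; in both cases there is an atom at $0$. If \eqref{eq-cont1} holds, then $\sigma_\xi^2=\sigma_\eta^2=0$ together with $\nu_\xi(\bR),\nu_\eta(\bR)<\infty$ makes both $\xi$ and $\eta$ of finite variation, jumping only finitely often, so that the event $A$ that neither jumps in $[0,t]$ has probability $\re^{-t(\nu_\xi(\bR)+\nu_\eta(\bR))}>0$; on $A$ the paths are the deterministic lines $s\mapsto\gamma_\xi^0 s$ and $s\mapsto\gamma_\eta^0 s$, so $V_{\xi,\eta}(t)=\gamma_\eta^0\int_0^t\re^{-\gamma_\xi^0 s}\,\di s$ takes a fixed value on $A$, producing an atom there.

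I do not expect a genuine obstacle; the content is essentially a careful bookkeeping of Theorem~\ref{t-ac2}. The one point deserving attention is the case analysis in the ``only if'' part: verifying that (a)--(c) are exhaustive under the standing convention that a compound Poisson process has zero drift and finite L\'evy measure, and that the failure of \eqref{eq-cont1} in case (c) reduces cleanly to the hypothesis on $\xi$ in Theorem~\ref{t-ac2}(v). One should also note, as is standard and already used in Section~\ref{S2}, that on the event $A$ above the semimartingale integral coincides with the pathwise Lebesgue--Stieltjes integral against the finite-variation path of $\eta$.
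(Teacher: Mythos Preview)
Your proposal is correct and follows essentially the same approach as the paper's own proof. The paper merges your cases (a) and (b) into a single step by invoking Proposition~\ref{p-cont1} after conditioning on $\xi=f$ (rather than citing Theorem~\ref{t-ac2}(i) and \cite[Thm.~27.4]{Sato2013} separately), and otherwise handles case~(c) and the ``if'' direction exactly as you do.
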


\begin{proof}
As seen in the proof of Corollary \ref{c-ac2}, if $\eta$ is the zero process or a compound Poisson process, then $V_{\xi,\eta}(t)$ has an atom at zero and  hence is not continuous. If \eqref{eq-cont1} holds but $\eta$ is neither a compound Poisson process nor the zero process, then $\eta$ has drift $\gamma_\eta^0 \neq 0$ and $\xi$ is of finite variation and finite jump activity with drift $\gamma_\xi^0 \in \bR$. The probability that both $\eta$ and $\xi$ do not jump before time $t$ is positive, and on this set we have $\smash{V_{\xi,\eta}(t)  = \gamma_\eta^0 \int_0^t \re^{-\gamma_\xi^0 s} \, \di s,}$ so that $V_{\xi,\eta}(t)$ has an atom.

\medskip
Now assume that neither \eqref{eq-cont1} is satisfied nor that  $\eta$ is a compound Poisson process nor the zero process. If $\sigma_\eta^2 > 0$ or $\nu_\eta(\bR) = \infty$, conditioning on
 $\xi=f$ we see that~$\smash{\int_0^t \re^{-f(s-)}\, \di \eta_s}$ is continuous by Proposition \ref{p-cont1}. Hence
 $$P(V_{\xi,\eta}(t) = b) = \int_{D([0,\infty),\bR)} P(V_{\xi,\eta}(t) = b|\xi=f) \, P_\xi(\di f) = 0$$
for each $b\in \bR$, so that $V_{\xi,\eta}(t)$ is continuous. If $\nu_\eta(\bR) < \infty$ and $\sigma_\eta^2=0$, we must have $\gamma_\eta^0 \neq 0$ since $\eta$ is not a compound Poisson process and not the zero process. Since \eqref{eq-cont1} is violated, necessarily $\sigma_\xi^2>0$ or $\nu_\xi(\bR) = \infty$. Then $V_{\xi,\eta}(t)$ is absolutely continuous and hence continuous by Theorem \ref{t-ac2}~(v).
\end{proof}

\begin{remark}
{\rm \begin{enumerate}
\item[(i)]
It is clear that a pure types theorem does not hold for the law of $\int_0^t \re^{-\xi_{s-}} \, \di \eta_s$. The simplest counterexample is when $\xi=0$ and $\eta$ is a compound Poisson process with absolutely continuous jump distribution. Then $V_{\xi,\eta}(t)=\eta_t$ whose distribution has an atom at zero, but restricted to $\bR^*$ has a density. Similar examples can be constructed when both $\xi$ and $\eta$ are compound Poisson processes.
\item[(ii)] An example when $\int_0^t \re^{-\xi_{s-}} \, \di \eta_s$ is continuous singular is easily constructed by choosing $\xi=0$ and for $\eta$ a process for which $\eta_t$ is continuous singular, examples of which are given in \cite[Thms. 27.19, 27.23]{Sato2013}.
\end{enumerate} }
\end{remark}


\section{Proofs for the results in Section \ref{S5neu}}\label{S5newproofs}

\begin{proof}[Proof of Theorem \ref{t-ac3}]
	(i) -- (iv):	By Theorem  \ref{t-ac2}, any of the given conditions (i) to (iv)  implies absolute continuity of $V_{\xi,\eta}(t) = \int_0^t e^{-\xi_{s-}} \, d\eta_s$ for all $t>0$. Let $B\in \cB_1$ be a Lebesgue-null set. Conditioning on $\tau=t$ we obtain
		\begin{align*} P \left(\int_0^\tau \re^{-\xi_{s-}} \, d\eta_s \in B\right) & = \int_0^\infty P \left(\int_0^t \re^{-\xi_s} \, d\eta_s \in B \Big| \tau = t\right) \, P_\tau (\di t)\\ & = \int_0^\infty 0 \, P_\tau (\di t) = 0,\end{align*} 
	showing that $\int_0^\tau e^{-\xi_{s-}} \, d\eta_s$ is absolutely continuous.

\medskip
	(v) If $\eta$ is of finite variation with non-zero drift, we can condition on the path $\xi=f$. By Corollary \ref{c-ac1}~(v), $\int_0^\tau \re^{-f(s-)} \di \eta_s$ will be absolutely continuous for each path $f$. Integrating the condition out we see as in  \eqref{eq-ac-3} that $\int_0^\tau \re^{-\xi_{s-}} \, \di \eta_s$ is absolutely continuous.

\medskip
	(vi) Denote by $T$ the time of the first jump of $\xi$. This is exponentially distributed with parameter $\nu_\xi(\bR) \in (0,\infty)$ and independent from $\tau$. Then, conditional on $\{\tau > T\}$, the random variables $\tau-T$ and $T$ are conditionally independent and (conditionally) exponentially distributed with parameters $q$ and $q+\nu_\xi(\bR)$, respectively. Conditional on~$\{\tau > T\}$ we have
	$$V_{q,\xi,\eta} = \int_0^T \di \eta_s + \int_{T+}^\tau \re^{-\xi_{s-}} \, \di \eta_s = \eta_T + \int_{T+}^{T + (\tau-T)} \re^{-\xi_{s-}} \, \di \eta_s.$$
	Since the two summands are conditionally independent and the first is absolutely continuous by~\eqref{eq-ACP} as $T$ is conditionally exponentially distributed with parameter $q+\nu_\xi(\bR)$, the distribution of $V_{q,\xi,\eta}$ conditional on $\{\tau > T\}$ is absolutely continuous. On the other hand, conditional on $\{\tau < T\}$ we have $V_{q,\xi,\eta} = \eta_\tau$ with $\tau$ being conditionally exponentially distributed with parameter $q+\nu_\xi(\bR)$, so that~\eqref{eq-ACP} gives absolute continuity of $V_{q,\xi,\eta}$ also under this conditioning. Adding up the two cases shows absolute continuity of $V_{q,\xi,\eta}$.
\end{proof}

\begin{proof}[Proof of Corollary \ref{c-ac4}]
	If $\eta$ is a compound Poisson process, then the probability that $\eta$ does not jump before time $\tau$ is positive, hence $V_{q,\xi,\eta}$ has an atom at zero and is hence not continuous, and similarly when $\eta$ is the zero process. If $\eta$ is neither a compound Poisson process nor the zero process, it is of finite variation with non-zero drift, or satisfies $\nu_\eta(\bR) = \infty$ or $\sigma_\eta^2>0$, in which case $V_{q,\xi,\eta}$ is absolutely continuous by Theorem \ref{t-ac3}~(v), (iii), or (i), in combination with Example \ref{ex-Hawkes}.
\end{proof}

\begin{proof}[Proof of Corollary \ref{c-ac7}]
	Sufficiency of the ACP condition follows from Theorem \ref{t-ac3}~(vi). For the converse, assume that $V_{q,\xi,\eta}$ is absolutely continuous and denote by $T$ the time of the first jump of $\xi$. Then also conditional on $\{\tau < T\}$, $V_{q,\xi,\eta}=\eta_\tau$ is absolutely continuous. But $\tau$ is conditional exponentially distributed with parameter $q+\nu_\xi(\bR)$, showing that $\eta$ satisfies the ACP condition by the discussion following Example \ref{ex-Hawkes}.
\end{proof}

\begin{proof}[Proof of Proposition \ref{c-ac5}]
	We have already seen in the proof of Corollary \ref{c-ac4} that if $\eta$ is a compound Poisson process or the zero process, then $V_{q,\xi,\eta}$ has an atom at zero and is hence not continuous. If $\eta$ is neither a compound Poisson process nor the zero process and does not satisfy Condition \eqref{eq-cont1}, then $P(V_{\xi,\eta}(t) = b) = 0$ for any $t>0$ and $b\in \bR$ by Corollary \ref{char-continuous}. Continuity of $V_{q,\xi,\eta}$ then follows by conditioning on $\tau=t$ via
	$$P(V_{q,\xi,\eta} = b) = \int_0^\infty P \left( V_{\xi,\eta}(t) = b \big| \tau = t\right) P_\tau (\di t) = \int_0^\infty 0 \, P_\tau (\di t) = 0.$$
	Finally, if $\eta$ is neither a compound Poisson process nor the zero process but satisfies Condition \eqref{eq-cont1}, then it must be of finite variation with non-zero drift, so that $V_{q,\xi,\eta}$ is absolutely continuous by Theorem \ref{t-ac3}~(v).
\end{proof}

\begin{proof}[Proof of Theorem \ref{t-ac5}]
	(viii) As mentioned before, $V_{0,\xi,\eta}$ is self-decomposable when $\xi$ is spectrally negative. Since additionally  it is not constant a.s. if additionally at least one of $\xi$ and $\eta$ is non-deterministic by \cite[Thm. 2.2]{BertoinLindnerMaller2008}, it is absolutely continuous in this case (\cite[Ex. 27.8]{Sato2013}).

\medskip
	(v) Let $\eta$ be of finite variation with non-zero drift. Since $\int_0^\infty \re^{-\xi_{s-}} \, \di \eta_s$ converges, $\xi$ must be transient. It then follows from \cite[Thm. 3.9(a)]{BertoinLindnerMaller2008} that $V_{0,\eta,\xi}$ is absolutely continuous when $\nu_\xi (\bR) > 0$. If $\nu_\xi(\bR) = 0$, then $\xi$ is spectrally negative, and absolute continuity of $V_{0,\xi,\eta}$ follows from (viii).

\medskip
	(i) -- (iii): As long as $\nu_\eta(\RR)>0$ in (iii), this follows in complete analogy to the corresponding proof of Theorem \ref{t-ac2}~(i)--(iii) by conditioning on the paths $\xi=f$, setting $t=\infty$, and observing that $\lambda^1(\{ s \in [0,\infty) : \re^{-f(s-)} \neq 0\}) = \infty$. If $\nu_\eta(\RR)=0$ in (iii), either $\sigma_\eta^2>0$ or $\eta$ is deterministic (but non-zero) and hence of finite variation. The claim then follows from the previously shown cases (i) and (v).

\medskip
	(iv)  Choose a set $D\in \cB_1$ that is bounded away from zero with $\nu_{\xi,{\rm{ac}}}(D) > 0$ and $\nu_{\xi,{\rm{sing}}}(D) = 0$.
	Denote by $R$ the time of the first jump of $\xi$ with jumping size in $D$ and by~$Y$ its jump size (which has a density by assumption). Observe that $R$ is a stopping time with respect to the augmented filtration. Writing
		$$\int_0^\infty \re^{-\xi_{s-}} \, \dx\eta_s = \int_0^R \re^{-\xi_{s-}} \, \dx\eta_s + \re^{-Y} \left( \re^{-\xi_{R-}} \int_{R+}^\infty \re^{-(\xi_{s-} - \xi_R)} \, \dx\eta_s\right),$$
	observing that $Y$ is independent from
	$(\int_0^R \re^{-\xi_{s-}} \, \dx\eta_s,$  $\re^{-\xi_{R-}} \int_{R+}^\infty \re^{-(\xi_{s-} - \xi_R)} \, \dx\eta_s)$ and that $\int_{R+}^\infty \re^{-(\xi_{s-} - \xi_R)} \, \dx\eta_s \stackrel{d}{=} \int_0^\infty \re^{-\xi_{s-}} \, \dx\eta_s$ is continuous (hence $\neq 0$ a.s.) by Theorem 2.2 in \cite{BertoinLindnerMaller2008}, it follows as in the proof of Theorem \ref{t-ac2}~(iv) that $\int_0^\infty \re^{-\xi_{s-}} \, \dx\eta_s$ is absolutely continuous.

\medskip
	(vi) Denote by $T$ the time of the first jump of $\xi$, which is exponentially distributed with parameter $\nu_\xi(\bR) \in (0,\infty)$. Then
		$$V_{0,\xi,\eta} = \int_0^T \di \eta_s + \int_{T+}^\infty \re^{-\xi_{s-}} \, \di \eta_s = \eta_T +  \re^{-\xi_T} \int_{T+}^\infty \re^{-(\xi_{s-}-\xi_T)} \, \di \eta_s$$
	with  $\eta_T$, $\xi_T = \Delta \xi_T$ and $\int_{T+}^\infty \re^{-(\xi_{s-}-\xi_T)} \, \di \eta_s$ being independent by the strong Markov property.  Since $\eta_T$ is absolutely continuous by the ACP condition, absolute continuity of $V_{0,\xi,\eta}$ follows.

\medskip
	(vii) Denote by $R_i$ the time of the $i$'th jump of $\eta$ and by $Z_i$ its jump size. Since $\xi$ and~$\eta$ a.s. do not jump together, we have
		$$V_{0,\xi,\eta} = \int_0^\infty \re^{-\xi_{s-}} \, \di \eta_s = \sum_{i=1}^\infty \re^{-\xi_{R_i-}} Z_i \stackrel{a.s.}{=} \re^{-\xi_{R_1}} \left( \sum_{i=1}^\infty \re^{-(\xi_{R_i} - \xi_{R_1})} Z_i \right).$$
	But $\xi_{R_1}$ and $\sum_{i=1}^\infty \re^{-(\xi_{R_i} - \xi_{R_1})} Z_i$ are independent by the strong Markov property, and since~$V_{0,\xi,\eta}$ is different from zero a.s. (since it has no atom as $\eta$ is non-deterministic), also~$\sum_{i=1}^\infty \re^{-(\xi_{R_i} - \xi_{R_1})} Z_i$ is different from zero a.s. The claim then follows by observing that~$\re^{-\xi_{R_1}}$ is absolutely continuous by the ACP condition, since $R_1$ is exponentially distributed with parameter $\nu_\eta(\bR) \in (0,\infty)$.
\end{proof}

\begin{proof}[Proof of Corollary \ref{c-ac6}]
If $\sigma_\eta^2 > 0$, this follows from Theorem \ref{t-ac5}~(i). If $\sigma_\xi^2>0=\sigma^2_\eta$, then~$\xi$ satisfies
Equation \eqref{eq-Hawkes} by Example \ref{ex-Hawkes}, and $\nu_\xi(\bR) >0$ or $\eta$ is deterministic. The claim now follows from parts (iii) and (v) of Theorem \ref{t-ac5}, respectively.
\end{proof}

\section{Proofs for the results in Section \ref{S4}}\label{S4proofs}

In the proof of Theorem~\ref{thm-support}, we will frequently make use of the following three lemmas.

\begin{lemma}\label{support-killing-from-nokilling}
	Let $\xi$ and $\eta$ be 
	such that $V_{0,\xi,\eta}:=\int_0^\infty \re^{-\xi_{s-}} d\eta_s$ exists a.s. Then
		$$\supp(V_{0,\xi,\eta})\subseteq \supp(V_{q,\xi,\eta}) \quad \text{for all }q>0.$$
\end{lemma}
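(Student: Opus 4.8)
The plan is to exploit that, conditionally on the killing time $\tau$, the killed functional $V_{q,\xi,\eta}$ is simply a truncated (non-killed) exponential functional $V_{\xi,\eta}(t)=\int_0^t\re^{-\xi_{s-}}\,\di\eta_s$, and that $V_{\xi,\eta}(t)\to V_{0,\xi,\eta}$ a.s.\ as $t\to\infty$ by the standing assumption. First I would record the mixture representation: since $\tau$ is $\mathrm{Exp}(q)$-distributed and independent of $(\xi,\eta)$, and since $t\mapsto V_{\xi,\eta}(t)$ admits a c\`adl\`ag, hence jointly measurable, version, conditioning on $\tau=t$ gives, for every $B\in\cB_1$,
$$P\bigl(V_{q,\xi,\eta}\in B\bigr)=q\int_0^\infty\re^{-qt}\,P\bigl(V_{\xi,\eta}(t)\in B\bigr)\,\di t.$$

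The second step would be a standard support/weak-convergence argument. Fix a point $x\in\supp(V_{0,\xi,\eta})$ and $\varepsilon>0$, and set $U:=(x-\varepsilon,x+\varepsilon)$, so that $p:=P(V_{0,\xi,\eta}\in U)>0$ by definition of the support. Since $V_{\xi,\eta}(t)\to V_{0,\xi,\eta}$ a.s., hence in distribution, the portmanteau theorem applied to the open set $U$ yields $\liminf_{t\to\infty}P(V_{\xi,\eta}(t)\in U)\geq p$, so there is some $T\in(0,\infty)$ with $P(V_{\xi,\eta}(t)\in U)\geq p/2$ for all $t\geq T$. Plugging this into the mixture representation gives
$$P\bigl(V_{q,\xi,\eta}\in U\bigr)\geq q\int_T^\infty\re^{-qt}\,\frac p2\,\di t=\frac p2\,\re^{-qT}>0.$$
As $\varepsilon>0$ was arbitrary, $x\in\supp(V_{q,\xi,\eta})$; and as $x\in\supp(V_{0,\xi,\eta})$ was arbitrary, this yields $\supp(V_{0,\xi,\eta})\subseteq\supp(V_{q,\xi,\eta})$.

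I do not expect a genuine obstacle here. The only point that needs a little care is the disintegration in the first display, which rests on the independence of $\tau$ from $(\xi,\eta)$ together with the joint measurability of $(t,\omega)\mapsto V_{\xi,\eta}(t)(\omega)$ coming from the c\`adl\`ag choice of version. If one prefers to avoid invoking the portmanteau theorem, the bound $P(V_{\xi,\eta}(t)\in U)\geq p/2$ for large $t$ can be obtained directly from the inequality $P(V_{\xi,\eta}(t)\in U)\geq P(|V_{0,\xi,\eta}-x|<\varepsilon/2)-P(|V_{\xi,\eta}(t)-V_{0,\xi,\eta}|>\varepsilon/2)$ and the convergence $V_{\xi,\eta}(t)\to V_{0,\xi,\eta}$ in probability.
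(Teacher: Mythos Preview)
Your proof is correct and is essentially a careful, rigorous expansion of the paper's own argument, which is the single sentence ``This is clear as for any $q>0$ the random variable $\tau$ may become arbitrarily large.'' Your mixture representation together with the portmanteau (or direct) bound makes precise exactly this intuition: since $V_{\xi,\eta}(t)\to V_{0,\xi,\eta}$ and $\tau$ puts positive mass on every interval $[T,\infty)$, any neighbourhood of a point in $\supp(V_{0,\xi,\eta})$ is hit with positive probability by $V_{q,\xi,\eta}$.
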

\begin{proof}
	This is clear as for any $q>0$ the random variable $\tau$ may become arbitrarily large.
\end{proof}

\begin{lemma}\label{support-killing-from-lessjumps}
	Let $B_\xi$, $B_\eta\subset \RR$ be two Borel sets that are bounded away from zero. Define $\smash{\widehat{\nu}_{\xi}:=\nu_\xi|_{B_\xi^c}}$ and $\smash{\widehat{\nu}_{\eta}:=\nu_{\eta}|_{B_\eta^c}}$ as the restrictions of $\nu_\xi$ and~$\nu_{\eta}$ on $\RR\setminus B_\xi$ and $\RR\setminus B_\eta$, respectively, and denote
	by $\smash{\widehat{\xi}}$ and $\widehat{\eta}$ the independent L\'evy processes
	\begin{displaymath}
			\widehat{\xi}_t=\xi_t-\sum_{\substack{0<s\leq t\\ \Delta\xi_s\in B_{\xi}}}\Delta\xi_s,\quad \widehat{\eta}_t=\eta_t-\sum_{\substack{0<s\leq t\\ \Delta\eta_s\in B_{\eta}}}\Delta\eta_s
	\end{displaymath}
	with L\'{e}vy measures $\widehat{\nu}_\xi$ and $\widehat{\nu}_\eta$, respectively. Then
		$$\supp(V_{q,\smash{\widehat{\xi}}, \widehat{\eta}})\subseteq \supp(V_{q,\xi,\eta}).$$
\end{lemma}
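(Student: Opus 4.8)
The plan is to use a coupling: realise everything on one probability space and observe that, with strictly positive conditional probability, none of the "removed" jumps of $\xi$ or $\eta$ occurs before the independent killing time $\tau$, so that on that event the two killed exponential functionals coincide path by path.

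First I would write the L\'evy--It\^o decompositions $\xi=\widehat{\xi}+\check{\xi}$ and $\eta=\widehat{\eta}+\check{\eta}$, where $\check{\xi}_t=\sum_{0<s\leq t,\,\Delta\xi_s\in B_\xi}\Delta\xi_s$ and $\check{\eta}_t=\sum_{0<s\leq t,\,\Delta\eta_s\in B_\eta}\Delta\eta_s$ are compound Poisson processes with rates $p:=\nu_\xi(B_\xi)$ and $p':=\nu_\eta(B_\eta)$, both finite because $B_\xi,B_\eta$ are bounded away from $0$. In this construction the four processes $\widehat{\xi},\check{\xi},\widehat{\eta},\check{\eta}$ are mutually independent and independent of $\tau\overset{d}{=}\mathrm{Exp}(q)$, and $\widehat{\xi},\widehat{\eta}$ are exactly the processes of the statement; in particular $\int_0^\tau \re^{-\widehat{\xi}_{s-}}\,\di\widehat{\eta}_s$ has law $\law(V_{q,\widehat{\xi},\widehat{\eta}})$, since $\tau$ is independent of $(\widehat{\xi},\widehat{\eta})$.

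Let $A$ be the event that neither $\check{\xi}$ nor $\check{\eta}$ has a jump in $[0,\tau]$. Conditioning on $\tau$ and using independence, $\EE[\one_A\mid\tau,\widehat{\xi},\widehat{\eta}]=\re^{-(p+p')\tau}>0$ a.s. On $A$ the paths of $\xi$ and $\widehat{\xi}$, and of $\eta$ and $\widehat{\eta}$, agree on $[0,\tau]$; since the stochastic integral up to $\tau$ depends only on integrand and integrator restricted to $[0,\tau]$ --- which one sees cleanly by conditioning on the path $\xi=f$ and working with the Wiener integral of the deterministic function $\re^{-f(\cdot-)}$ with respect to $\eta$ as recalled in Section~\ref{S2} --- we obtain
$$V_{q,\xi,\eta}=\int_0^\tau \re^{-\xi_{s-}}\,\di\eta_s=\int_0^\tau \re^{-\widehat{\xi}_{s-}}\,\di\widehat{\eta}_s\qquad\text{a.s. on }A.$$
This pathwise identification is the single step that needs care, and I expect it to be the main (though mild) obstacle.

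Finally I would fix $x\in\supp(V_{q,\widehat{\xi},\widehat{\eta}})$ and $\varepsilon>0$; then $C:=\bigl\{\,\bigl|\int_0^\tau\re^{-\widehat{\xi}_{s-}}\,\di\widehat{\eta}_s-x\bigr|<\varepsilon\,\bigr\}$ has positive probability and is measurable with respect to $(\tau,\widehat{\xi},\widehat{\eta})$. By the display above, $A\cap C\subseteq\{|V_{q,\xi,\eta}-x|<\varepsilon\}$, so that
$$P\bigl(|V_{q,\xi,\eta}-x|<\varepsilon\bigr)\geq P(A\cap C)=\EE\bigl[\one_C\,\re^{-(p+p')\tau}\bigr]>0,$$
since the integrand is nonnegative, is strictly positive on the positive-probability event $C$, and we used the tower property with the conditional probability of $A$ computed above. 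As $x$ and $\varepsilon$ were arbitrary, this gives $\supp(V_{q,\widehat{\xi},\widehat{\eta}})\subseteq\supp(V_{q,\xi,\eta})$.
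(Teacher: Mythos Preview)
Your argument is correct and is exactly the approach of the paper, only spelled out in greater detail: the paper's proof is the one-sentence observation that with positive probability the paths of $(\xi,\eta)$ and $(\widehat{\xi},\widehat{\eta})$ agree on $[0,\tau]$, whence the claim follows. Your coupling via the L\'evy--It\^o decomposition, the conditional probability $\re^{-(p+p')\tau}$, and the tower-property step make this precise but add nothing conceptually new.
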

\begin{proof}
	With positive probability we have $\{\xi_t,\eta_t, 0\leq t\leq \tau\}= \{\smash{\widehat{\xi}}_t, \widehat{\eta}_t, 0\leq t\leq \tau\}$ for any realization of $\tau$. Hence, the claim follows.
\end{proof}

Recall from Section \ref{S2} that for a function $f:[0,t] \to \bR$ which is integrable in the sense of Rajput and Rosinski \cite[p.460]{RajputRosinski1989}, the random variable $\int_0^t f(s) \, \di \eta_s$ is infinitely divisible with characteristic triplet $(\sigma_{f,t}^2,\nu_{f,t},\gamma_{f,t})$ given by \eqref{eq-LM7} -- \eqref{eq-LM5}. From this we derive the following.

\begin{lemma}\label{lemma-integral-id}
	Let ${f\negmedspace:[0,t]\rightarrow\RR}$ be bounded and Borel measurable such that $f$ is not Lebesgue almost everywhere equal to~zero. Then the following are true:
	\begin{itemize}
		\item[(i)] The L\'evy process with characteristic triplet $(\sigma_{f,t}^2,\nu_{f,t},\gamma_{f,t})$ is of finite variation if and only if $\eta$ is of finite variation.
		In that case, the corresponding drifts $\gamma_{f,t}^0$ and $\gamma_\eta^0$ are related by $\gamma_{f,t}^0=\gamma_{\eta}^0\int_0^t f(s)\dx s$.
		\item[(ii)] If $0\in\supp(\nu_{\eta})$, then $0\in\supp(\nu_{f,t})$.
		\item[(iii)] $\nu_{f,t}$ is infinite if and only if $\nu_\eta$ is infinite.
	\end{itemize}
	If, additionally, $f$ is strictly positive on $[0,t]$, then
	\begin{itemize}
		\item[(iv)] $\nu_{f,t}((0,\infty))>0$  if and only if  $\nu_\eta((0,\infty))>0$, and similarly for $(-\infty,0)$.
	\end{itemize}
\end{lemma}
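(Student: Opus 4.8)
The plan is to read off all four assertions directly from the explicit formulas \eqref{eq-LM7}--\eqref{eq-LM5} for the characteristic triplet $(\sigma_{f,t}^2,\nu_{f,t},\gamma_{f,t})$ of $\int_0^t f(s)\,\di\eta_s$, together with the finite-variation criterion (vanishing Gaussian part and $\int_{[-1,1]}|x|\,\nu(\di x)<\infty$) recalled in Section \ref{S2}. The one technical point common to all parts is to control the set $A:=\{s\in[0,t]:f(s)=0\}$: since $f$ is bounded and measurable and not Lebesgue-a.e.\ zero, $\lambda^1([0,t]\setminus A)>0$, and the integrand $\one_{B\setminus\{0\}}(f(s)x)$ in \eqref{eq-LM1} vanishes for $s\in A$, so throughout one integrates effectively over $[0,t]\setminus A$.

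For (i): $f$ bounded gives $\sigma_{f,t}^2=\sigma_\eta^2\int_0^t f(s)^2\,\di s$, which is zero iff $\sigma_\eta^2=0$. For the jump part, Fubini applied to \eqref{eq-LM1} yields $\int_{[-1,1]}|y|\,\nu_{f,t}(\di y)=\int_0^t\!\int_{\bR}|f(s)x|\,\one_{\{|f(s)x|\le1\}}\,\nu_\eta(\di x)\,\di s$; using boundedness of $f$ (say $\|f\|_\infty\le M$) one compares this with $\int_0^t|f(s)|\,\di s\cdot\int_{[-1,1]}|x|\,\nu_\eta(\di x)$ up to contributions from the annulus $\{1<|x|\le M^{-1}\vee 1\}$, which is $\nu_\eta$-finite and bounded, to conclude finiteness of the one is equivalent to finiteness of the other. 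Hence $\int_0^t f(s)\,\di\eta_s$ is of finite variation iff $\eta$ is. In that case, assuming $\int_{[-1,1]}|x|\,\nu_\eta(\di x)<\infty$, the drift is $\gamma_{f,t}^0=\gamma_{f,t}-\int_{[-1,1]}y\,\nu_{f,t}(\di y)$; substituting \eqref{eq-LM5} and \eqref{eq-LM1} and using Fubini, the $\one_{\{|f(s)x|\le1\}}$ terms cancel and one is left with $\int_0^t\big[f(s)\gamma_\eta-\int_{\bR}f(s)x\,\one_{\{|x|\le1\}}\,\nu_\eta(\di x)\big]\,\di s=\big(\gamma_\eta-\int_{[-1,1]}x\,\nu_\eta(\di x)\big)\int_0^t f(s)\,\di s=\gamma_\eta^0\int_0^t f(s)\,\di s$, as claimed.

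For (ii): if $0\in\supp(\nu_\eta)$ then $\nu_\eta((-\delta,\delta)\setminus\{0\})>0$ for every $\delta>0$; picking any $s_0\notin A$ and a small interval $I$ around $s_0$ on which $|f|$ is bounded below (here I would note that if $f$ is only measurable one can still find, for any $\varepsilon>0$, a set of positive measure on which $0<c\le|f|\le C$), the set $\{(s,x):s\in I,\ |f(s)x|<\delta\}$ has positive $(\lambda^1\otimes\nu_\eta)$-measure for all $\delta$, so $\nu_{f,t}((-\delta,\delta)\setminus\{0\})>0$, i.e.\ $0\in\supp(\nu_{f,t})$. For (iii), put $B=\bR$ in \eqref{eq-LM1}: $\nu_{f,t}(\bR)=\int_0^t\nu_\eta(\bR\setminus\{0\}/f(s))\,\di s$ wherever $f(s)\ne0$, which equals $\big(\int_{[0,t]\setminus A}\di s\big)\cdot\nu_\eta(\bR)=\lambda^1([0,t]\setminus A)\,\nu_\eta(\bR)$, finite iff $\nu_\eta$ is. For (iv), with $f>0$ on $[0,t]$ the map $x\mapsto f(s)x$ preserves sign, so \eqref{eq-LM1} gives $\nu_{f,t}((0,\infty))=\int_0^t\nu_\eta((0,\infty))\,\di s=t\,\nu_\eta((0,\infty))$ and likewise for $(-\infty,0)$, whence the stated equivalences.

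The main obstacle is purely bookkeeping in parts (i) and (ii): keeping track of the truncation functions $\one_{\{|f(s)x|\le1\}}$ versus $\one_{\{|x|\le1\}}$ under the substitution $y=f(s)x$, and handling the possibility that $f$ is only measurable (not, say, continuous) so that $\{f\ne0\}$ need not contain an interval — this is why one argues via sets of positive Lebesgue measure on which $|f|$ is pinched between two positive constants rather than via intervals. None of this is deep, but it is where care is needed.
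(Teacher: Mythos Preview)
Your proposal is correct and follows essentially the same approach as the paper: parts (ii)--(iv) are read off directly from \eqref{eq-LM1} (the paper simply states this, while you spell out the positive-measure argument for (ii) and the total-mass computation for (iii), (iv)), and part (i) proceeds via Fubini applied to $g(y)=|y|\one_{[-1,1]}(y)$ together with the Gaussian formula \eqref{eq-LM7}, then derives the drift by subtracting $\int_{[-1,1]} y\,\nu_{f,t}(\di y)$ from \eqref{eq-LM5}. One small imprecision: your ``annulus $\{1<|x|\le M^{-1}\vee 1\}$'' should really be the $s$-dependent set $\{1<|x|\le 1/|f(s)|\}$ (which is nonempty whenever $|f(s)|<1$, regardless of $M$), but the bound $|f(s)x|\le 1$ on that set still yields the needed finite remainder $\le t\,\nu_\eta(\bR\setminus[-1,1])$, exactly as in the paper.
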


\begin{proof}
	Parts (ii), (iii) and (iv) follow directly from \eqref{eq-LM1}. For the proof of (i),  by measure theoretic induction, i.e., considering linear combinations and limits of indicator functions $g(x)=\mathbf{1}_A(x)$ of Borel sets $A\not\ni 0$, it follows from \eqref{eq-LM1} that
	\begin{equation} \label{eq-LM4}
			\int_\bR g(x) \, \nu_{f,t}(\di x) = \int_0^t \int_\bR g(f(s) x) \, \nu_\eta(\di x) \, \di s
	\end{equation}
	for any Borel measurable function $g:\bR \to [0,\infty)$ satisfying $g(0) = 0$, and similarly for any Borel measurable function $g:\bR\to \bR$ with $g(0) = 0$ for which the integrals exist. Applying
	\eqref{eq-LM4} to~${g(x)=|x|\one_{[-1,1]}(x)}$ then gives
	\begin{align*}
			\int_{-1}^1|y|\nu_{f,t} (\dx y)&
			=\int_0^t|f(s)|\int_{-1/|f(s)|}^{1/|f(s)|}|x|\, \nu_\eta(\dx x)\, \dx s,
	\end{align*}
	from which in combination with \eqref{eq-LM7} the \lq\lq only if\rq\rq~part readily follows. For the converse, observe that the right-hand side of the above equation can be bounded by
	$$\int_0^t |f(s)| \di s \int_{-1}^1 |x|\, \nu_\eta(\di x) + \int_0^t |f(s)| \one_{\{ |f(s)| < 1\}}  \frac{1}{|f(s)|} \nu_\eta(\bR \setminus [-1,1])  \, \di s,$$
	which is finite if $\eta$ is of finite variation since we assumed boundedness of $f$.
	Finally, the expression for the drift $\gamma_{f,t}^0$  follows from \eqref{eq-LM5}, \eqref{eq-LM4} and the fact that ${\gamma_{f,t}^0 = \gamma_{f,t} - \smash{\int_{-1}^1 y \, \nu_{f,t}(\di y)}}$ and similarly for $\gamma_\eta^0$. Observe that (i) could have similarly been derived by a direct application of Theorem 2.10 and Equation (2.16) in \cite{Sato2007}.
\end{proof}

We can now prove Theorem \ref{thm-support}.

\begin{proof}[Proof of Theorem \ref{thm-support}]
	
	(i) In the case $\eta\equiv0$, the integrator induces the zero measure, yielding $V=0$ and thus $\supp(V)=\{0\}$.
	\medskip
	
	(iii)~(a,b)
	First, assume that $\eta$ is of infinite variation. By conditioning on $\tau$ and $\xi$, as mentioned in the preliminaries, we find
	\begin{align}
			P( V\in  B) = \int_0^{\infty}\int_{D([0,\infty),\RR)} P\Big( \int_0^t \re^{-f(s-)} \, d\eta_s \in B\Big) \, P_\xi(\di f)P_{\tau}(\dx t),\label{conditioning}
	\end{align}
	for all $B\in \cB_1$. From Lemma~\ref{lemma-integral-id} it follows that $\smash{\int_0^t\re^{-f(s-)}\dx\eta_s}$ is infinitely divisible with ${\smash{\int_{-1}^1|x|\nu_{f,t}(\dx x)}=\infty}$ or~$\sigma_{f,t}^2>0$. Thus, \cite[Thm.~24.10(i)]{Sato2013} implies ${P(\smash{\int_0^t\re^{-f(s-)}\dx\eta_s}\in B)>0}$ for all open~${B\subseteq\RR}$. Together with~\eqref{conditioning}, this yields $\supp(V)=\RR$.
	
	\medskip
	If $\eta$ is of finite variation with $0\in\supp(\nu_{\eta})$ and $\nu_{\eta}(\RR_+),\nu_{\eta}(\RR_-)>0$, a similar argument is applicable. Conditioning as in~\eqref{conditioning}, we find by Lemma~\ref{lemma-integral-id} that in this case~${0\in\supp(\nu_{f,t})}$, as well as $\nu_{f,t} (\RR_+),\nu_{f,t} (\RR_-)>0$. Therefore, $P(\smash{\int_0^t\re^{-f(s-)}\dx\eta_s}\in B)>0$ for all open $B\subseteq\RR$ by \cite[Thm.~24.10(ii)]{Sato2013} and thus $\supp(V)=\RR$ as claimed.
	\medskip
	
	(iv) when $0\in \supp(\nu_\eta)$:\\
	Let $\eta$ be as in (iv) and assume additionally that zero is in the support of $\nu_\eta$. Assume further that $\supp(\nu_{\eta})\subseteq[0,\infty)$, the case $\supp(\nu_\eta) \subseteq (-\infty,0]$ following by symmetry. Conditioning as in~\eqref{conditioning}, Lemma~\ref{lemma-integral-id} yields that $\smash{\int_0^t\re^{-f(s-)}\dx\eta_s}$ (more precisely, the L\'{e}vy process corresponding to it) is of finite variation with $0\in\supp(\nu_{f,t})$, $\nu_{f,t}(\bR_+) > 0 = \nu_{f,t}(\bR_-)$ and drift $\gamma_\eta^0 \smash{\int_0^t \re^{-f(s-)} \, \di s}$. By \cite[Thm.~24.10(iii)]{Sato2013} we find that
	\begin{equation}\label{support-conditioned-integral}
			\supp\Big(\int_0^t\re^{-f(s-)}\dx\eta_s\Big)=\Big[\gamma_{\eta}^0\int_0^t\re^{-f(s-)}\dx s,\infty\Big).
	\end{equation}
	Since zero is in the support of $\tau$, Equation~\eqref{conditioning} shows that $\supp(V) \supseteq [0,\infty)$. Since $V\geq 0$ a.s. when $\eta$ is a subordinator, we also get the reverse inequality, so that $\supp(V) = [0,\infty)$ when $\eta$ is a subordinator.

\medskip
	If $\eta$ is not a subordinator, by assumption we necessarily have $\gamma_\eta^0 < 0$. If then $\xi$ is a subordinator with strictly positive drift $\gamma_\xi^0$, then $1/\gamma_\xi^0 \in \supp (\smash{\int_0^\infty \re^{-\xi_{s-}} \, \di s})$ by \cite[Lem. 1]{BehmeLindnerMaejima2016} and \eqref{conditioning} and \eqref{support-conditioned-integral} show $\supp(V) \supseteq [\gamma_\eta^0/\gamma_\xi^0, \infty)$, as $\tau$ may become arbitrary large. On the other hand, \begin{equation} \label{eq-estimate2}
			V_{q,\xi,\eta} \geq \gamma_\eta^0 \int_0^\tau \re^{-\xi_{s-}} \, \di s \geq \gamma_\eta^0 \int_0^\infty \re^{-\gamma_\xi^0 s} \, \di s = \gamma_\eta^0/\gamma_\xi^0,
	\end{equation}
	showing the converse inequality when $\xi$ is a subordinator with strictly positive drift.

\medskip
	Finally, assume that $\eta$ is not a subordinator (hence $\gamma_\eta^0<0$) and $\xi$ is not a subordinator with strictly positive drift. If $\xi$ does not drift a.s. to infinity, then ${\int_0^\infty \re^{-\xi_{s-}} \, \di s = +\infty}$ by~\cite[Thm. 2]{EricksonMaller2005}, and if $\xi$ drifts a.s. to infinity, then $\smash{\int_0^\infty \re^{-\xi_{s-}} \, \di s}$ is finite but unbounded by \cite[Lem. 1]{BehmeLindnerMaejima2016}. As $\tau$ may become arbitrarily large, we conclude in both cases from \eqref{conditioning} and~\eqref{support-conditioned-integral} that $\supp(V) = \bR$ in this case.
	\medskip

	(ii), (iii)~(c), (iv) when $\gamma_\eta^0\neq 0$:\\
	Let $\eta$ be of finite variation with non-zero drift $\gamma_\eta^0$. By the cases already proved we can additionally  assume that $0 \not\in \supp(\nu_\eta)$, in particular $\nu_\eta(\bR) < \infty$. By symmetry, we can further assume without loss of generality that $\gamma_\eta^0>0$. We distinguish in the following whether $\xi$ is a subordinator with strictly positive drift or not.

\medskip
	Case 1: Assume $\xi$ is a subordinator with strictly positive drift $\gamma_\xi^0$.\\
	If $\nu_\eta(\bR_+) = 0$, then
	$V_{q,\xi,\eta} \leq {\gamma_\eta^0}/{\gamma_\xi^0}$ by the same estimates that lead to \eqref{eq-estimate2}, so that $\supp(V) \subseteq (-\infty, \gamma_\eta^0/\gamma_\xi^0]$ in this case. If additionally $\nu_\eta(\bR_-) > 0$, choose a constant $K>0$ such that $\nu_\eta([-K,0))>0$ and construct  $\widehat{\eta}$ from $\eta$ by subtracting the jumps that are less than $-K$ of $\eta$. Then $V_{0,\xi,\widehat{\eta}}$ exists by \cite[Thm. 2]{EricksonMaller2005} and from \cite[Thm. 1(iii)]{BehmeLindnerMaejima2016} together with Lemmas \ref{support-killing-from-nokilling} and \ref{support-killing-from-lessjumps} we obtain
		$$\supp(V_{q,\xi,\eta}) \supseteq \supp (V_{q,\xi,\widehat{\eta}}) \supseteq \supp (V_{0,\xi,\widehat{\eta}}) = (-\infty,\gamma_\eta^0/\gamma_\xi^0],$$
	which together with the estimate above gives the desired $\supp(V) = (-\infty,\gamma_\eta^0/\gamma_\xi^0]$, so that parts of (iv) are proved.

\medskip
	If $\nu_\eta(\bR_+) = \nu_\eta(\bR_-) = 0$ (as in (ii)), then $\eta$ is deterministic, $V_{0,\xi,\eta}$ converges and, unless also $\xi$ is deterministic,
		$$\supp(V_{q,\xi,\eta}) \supseteq \supp (V_{0,\xi,\eta}) = [0, \gamma_\eta^0/\gamma_\xi^0]$$
	by \cite[Lem. 1]{BehmeLindnerMaejima2016} and Lemma \ref{support-killing-from-nokilling}, so that together with the previous upper bound we obtain $\supp (V) = [0,\gamma_\eta^0/\gamma_\xi^0]$. If also $\xi_t=\gamma_\xi^0 t$ is deterministic, then ${\gamma_\eta^0 \smash{\int_0^\tau \re^{-\gamma_\xi^0 s}\, \di s = (1-\re^{-\tau \gamma_\xi^0})\gamma_\eta^0/\gamma_\xi^0}}$, giving the same support also in this case.
	
\medskip
	If $\nu_\eta(\bR_+) > 0 = \nu_\eta(\bR_-)$ (as in parts of (iv)), then $\eta$ is a subordinator and hence $V\geq 0$. Choose $K>0$ such that $\nu_\eta((0,K])>0$ and construct $\widehat{\eta}$ from $\eta$ by subtracting the jumps from $\eta$ that are greater than $K$. Again, $V_{0,\xi,\widehat{\eta}}$ exists and its support is $[0,\infty)$ by~\cite[Thm.~1(ii)]{BehmeLindnerMaejima2016} unless $\xi$ is deterministic, so that $\supp(V) = [0,\infty)$ in this case. If $\xi_t = \gamma_\xi^0 t$ is deterministic, then $\supp(V) \supseteq \supp(V_{0,\xi,\eta} ) = [\gamma_\eta^0/\gamma_\xi^0,\infty)$ by \cite[Thm. 1(ii)]{BehmeLindnerMaejima2016}. On the other hand, the assumption $0 \not\in \supp(\nu_\eta)$ implies $\nu_\eta(\bR) < \infty$, so that with positive probability, $\eta$ does not jump before time $\tau$. This implies $\smash{\supp(V_{q,\gamma_\xi^0 t,\eta}) \supseteq \supp (V_{q,\gamma_\xi^0 t, \gamma_\eta^0 t}) = [0, \gamma_\eta^0/\gamma_\xi^0]}$, so that altogether, $\supp(V) = [0,\infty)$ also when $\xi$ is deterministic.

\medskip
	If $\nu_\eta(\bR_+) , \nu_\eta(\bR_-) > 0$ (as in (iii)~(c)), choose $K>0$ such that $\nu_\eta([-K,0))$, $\nu_\eta([0,K]) >0$ and construct $\widehat{\eta}$ from $\eta$ by subtracting the jumps of absolute size greater than $K$. Then $\supp(V_{0,\xi,\widehat{\eta}}) = \bR$ by \cite[Thm. 1(i)]{BehmeLindnerMaejima2016} and $\supp(V) = \bR$ follows as before using Lemmas \ref{support-killing-from-nokilling} and \ref{support-killing-from-lessjumps}.

\medskip
	Case 2: Assume $\xi$ is not a subordinator with strictly positive drift.\\
	If $\xi_t$ does not drift a.s. to $\infty$, then $\smash{\int_0^\infty \re^{-\xi_{s-}} \, \di s} = \infty$ a.s. (cf. \cite[Thm.~2]{EricksonMaller2005}), and if $\xi_t$ drifts a.s. to $\infty$, then $\smash{\int_0^\infty \re^{-\xi_{s-}} \, \di s}$ is finite a.s., but has unbounded support by \cite[Lem.~1]{BehmeLindnerMaejima2016}. In both cases, for any $C>0$ there is some $t(C) \geq 0$ such that $\int_0^{t(C)} \re^{-\xi_{s-}} \, \di s \geq C$ on a set $\Omega_C$ with positive probability. Since ${t\mapsto \smash{\int_0^t \re^{-\xi_{s-}} \, \di s}}$ is pathwise continuous and increasing, on the set $\Omega_C$ it takes all values in $[0, C]$ when $t$ runs through $[0,t(C)]$. Since $\tau$ is independent of $(\xi,\eta)$ and has a strictly positive density on $[0,\infty)$, it follows that $\supp (\smash{\int_0^\tau \re^{-\xi_{s-}} \, \di s})  \supset [0,C]$ for each $C>0$ and hence $\supp(\smash{\int_0^\tau \re^{-\xi_{s-}} \, \di s}) = [0,\infty)$. This finishes the proof of (ii).

\medskip
	If $\nu_\eta(\bR_-) > 0 = \nu_\eta(\bR_+)$ (as in parts of (iv)), choose $a<0$ such that $\nu_\eta((2a,a)) > 0$. Choose $\varepsilon \in (0,1)$ such that $\sup_{s\in [0,\varepsilon]} |\xi_s| \leq 1$ has positive probability (possible since $\xi$ has c\`adl\`ag paths). Then for each $n\in \bN$, the probability that $\sup_{s\in [0,\varepsilon]} |\xi_s| \leq 1$ and that simultaneously $\eta$ has exactly $n$ jumps of size in $(2a,a)$ on $(0,\varepsilon)$ and no other jumps in this interval is strictly positive. On the corresponding set we have
		$$2na \re + \varepsilon \gamma_\eta^0/\re \leq \int_0^\varepsilon \re^{-\xi_{s-}} \, \di  \eta_s \leq na\re^{-1} + \re \gamma_\eta^0 \varepsilon.$$
	In particular, by choosing $n$ sufficiently large, we see that for any given $L<0$, the probability that $\int_0^\varepsilon \re^{-\xi_{s-}}\, \di \eta_s<L$ is strictly positive. Using as before that $\int_\varepsilon^\infty \re^{-\xi_{s-}} \, \di s$ is unbounded or a.s. equal to $+\infty$, together with the fact that the probability of $\eta$ having no jumps in the interval $[\varepsilon,\tau]$ is strictly positive, an application of the intermediate value theorem as before shows that $\supp (V_{q,\xi,\eta})  =\bR$ in this case.

\medskip
	Finally, if $\nu_\eta(\bR_+) > 0 = \nu_\eta(\bR_-)$ (as in parts of (iv)) or $\nu(\bR_+), \nu(\bR_-) > 0$ (as in (iii)~(c)), construct $\widehat{\eta}$ from $\eta$ by omitting the positive jumps of $\eta$. Then $\supp (V_{q,\xi,\eta}) \supseteq \supp(V_{q,\xi,\widehat{\eta}})$ by Lemma \ref{support-killing-from-lessjumps} and the claim follows from the cases above.
	
	\medskip
	(v) Assume that $\eta$ is a compound Poisson process with $0\notin\supp(\nu_{\eta})$. Let $T_1,T_2,\dots$ denote the jump times of $\eta$ and $(M_t)_{t\geq0}$ denote the underlying Poisson process, such that~${\eta_t=\sum_{j=1}^{M_t}B_j}$ with the i.i.d. random variables $B_1,B_2,\dots$ following the jump distribution of $\eta$. Consider now the random variables $X_k=\xi_{T_k}-\xi_{T_{k-1}}$ for $k\in\NN$, setting~$T_0=0$. Then $X_1,X_2,\dots$ are i.i.d. and, by the strong Markov property, have the same distribution as $\xi_{T_1}$. Note that $T_1$ is exponentially distributed with parameter $\nu_{\eta}(\RR)$. With probability one it now holds that
	\begin{align*}
		\int_0^{\tau}\re^{-\xi_{s-}}\dx\eta_s&=\int_0^{\tau}\re^{-\xi_s}\dx\eta_s=\sum_{j=1}^{M_{\tau}}\re^{-\xi_{T_j}}B_j=\sum_{j=1}^{M_{\tau}}\re^{-\sum_{k=1}^jX_k}B_j \\ &=\sum_{j=1}^{M_{\tau}}\Big(\prod_{k=1}^j\re^{-X_k}\Big)B_j.
	\end{align*}
	Considering the i.i.d. random variables $A_k=e^{-X_k}$, $k\in\NN$, we can thus write
	$V=\sum_{j=1}^{M_{\tau}}\big(\prod_{k=1}^jA_k\big)B_j$~a.s.,
	where the random variables $A_1,A_2,\dots$ and $B_1,B_2,\dots$are mutually independent.
	Conditioning on $\{M_{\tau}=n\}$ now leads to \eqref{support-eta-cpp}, where $\smash{A_k\overset{d}{=}\re^{-\xi_{T_1}}}$ implies~\eqref{def-Xi}.

\medskip
	Finally, it remains to deduce the explicit form of the set depending on the process $\xi$.
	Writing $\xi_{T_1} = \smash{\int_0^{T_1} \re^0 \di \xi_s}$, we see from the cases (ii)--(iv) already proved that, provided $\xi$ is not a compound Poisson process with $0\not\in \supp (\nu_\xi)$, then
	$\supp(-\xi_{T_1}) =  \{0\}$ if $\xi$ is the zero-process,  $\supp(-\xi_{T_1}) =
	(-\infty, 0]$ if $\xi$ is a non-zero subordinator,  $\supp(-\xi_{T_1}) =
	[0,\infty)$ if $-\xi$ is a non-zero subordinator, and $\supp(-\xi_{T_1}) = \bR$ otherwise.

\medskip
	If $\supp(\xi_{T_1}) = \bR$, it follows immediately from \eqref{support-eta-cpp} that $\supp(V) = [0,\infty)$, $(-\infty,0]$ or~$\bR$ whenever $\eta$ is a subordinator, the negative of a subordinator, or has two-sided jumps, respectively. Similarly, the simplification when $\xi\equiv 0$ is immediate.

\medskip
	Whenever $-\xi$ is a subordinator with $0\in \supp(\nu_\xi)$ or non-zero drift, we clearly have ${\supp(\smash{\re^{-\xi_{T_1}}})=[1,\infty)}$. The corresponding formulas for $\supp(V)$ whenever $\eta$ or $-\eta$ is a subordinator then follow immediately from \eqref{support-eta-cpp}.
	If $\eta$ has jumps of both signs, define $z_+=\inf(\supp(\nu_{\eta})\cap\RR_+)>0$ and $z_-=\sup(\supp(\nu_{\eta})\cap\RR_-)<0$. For  a given number~$x\in\RR$, choose $n\in\NN$ such that ${x-(n-1)z_-\geq z_+}$, as well as
	\begin{equation}\label{choices-ai-bi}
		b_1=\dots=b_{n-1}=z_-,\ a_1=\dots=a_{n-1}=1,\ b_n=z_+,\ a_n=\frac{x-(n-1)z_-}{z_+}.
	\end{equation}
	Then $b_1,\dots,b_n\in \supp(\nu_{\eta})$, $a_n\geq1$ and $\ln(a_1),\dots,\ln(a_n)\in \Xi$ such that we obtain
	$\sum_{j=1}^n \left(\prod_{k=1}^j a_k \right) b_j=x$,
	i.e., $x\in\supp(V)$ by~\eqref{support-eta-cpp} and thus $\supp(V)=\RR$ as claimed.

\medskip
	Finally, assume that $\xi$ is a non-zero subordinator or that it is a compound Poisson process with $\nu_\xi(\bR_+) > 0$. Construct $\widehat{\eta}$ from $\eta$ by deleting all jumps whose absolute size is greater than some constant $K$ but such that $\widehat{\eta}$ still has positive and/or negative jumps if $\eta$ does, and construct $\smash{\widehat{\xi}}$ from $\xi$ by deleting all its negative jumps. Then $V_{0,\widehat{\xi}, \widehat{\eta}}$ exists by \cite[Thm. 2]{EricksonMaller2005}, and by
	\cite[Thm. 1]{BehmeLindnerMaejima2016} its support is $[0,\infty)$, $(-\infty,0]$ or $\bR$, respectively, depending if $\eta$ or $-\eta$ or neither of them is a subordinator. The claim then follows from Lemmas \ref{support-killing-from-nokilling} and \ref{support-killing-from-lessjumps} using
	$\supp(V_{q,\xi,\eta}) \supseteq \supp (V_{q,\widehat{\xi}, \widehat{\eta}}) \supseteq
	\supp (V_{0,\widehat{\xi}, \widehat{\eta}})$.
\end{proof}

\begin{remark}
	{\rm An alternative proof of some of the cases considered in Theorem \ref{thm-support} can be based on a general result regarding the support of solutions of certain random fixed point  equations. Namely, as shown in~\cite[Thm. 2.5.5(1)]{BuraczewskiDamekMikosch2016}, if the random variable~$Z$ is a solution to  the random fixed point equation $\smash{Z\overset{d}{=}AZ+B}$ for two real-valued random variables~$A,B$, where~$Z$ is independent of $(A,B)$, and $A$ and $B$ are such that
		${P(Ax+B=x)<1}$ for every $x\in\RR$, $A\geq0$ a.s., $P(0<A<1)>0$ and $P(A>1)>0$,
		then $\supp(X)$ is either a half-line or $\RR$. In this case it is then enough to determine the left and right end points of the support. Unfortunately, by Theorem \ref{t-sdewithkilling}, this result can be applied in our situation only whenever neither $\xi$ nor $-\xi$ are subordinators. As the proof of these special cases would still have needed a case by case study as given in the present proof of Theorem \ref{thm-support}, we refrained from using the approach via random recurrence equations to keep the proof  of Theorem \ref{thm-support} as short as possible.
	}
\end{remark}

\begin{proof}[Proof of Proposition \ref{thm-support2}]
	(i) First, let $\supp(\nu_{\eta})\subseteq\RR_+$, i.e., $\eta$ is a subordinator. As, by assumption, the probability that the killing occurs before the first jump of $\xi$ is positive, we have~${\supp(\eta_{\tau})\subset\supp(V)}$.
	Recalling the structure of~$\supp(V)$ from Theorem~\ref{thm-support}~(v), $\xi$ being a compound Poisson process implies that
	\begin{displaymath}
			\Xi=\overline{\Big\{\sum_{j=1}^N -x_j, x_j\in\supp(\nu_\xi), N\in \NN_0\Big\}} \subseteq \RR_+.
	\end{displaymath}
	As, by assumption, $[\beta,\alpha]\subseteq\supp(\nu_{\xi})$, it follows directly that~$[-\alpha,-\beta]\subseteq\Xi$ here, as well as~$[-n\alpha,-n\beta]\subseteq\Xi$  for $n\in\NN$. In particular, if we have~${n\geq k=\smash{\lfloor\frac{\alpha}{\beta-\alpha}\rfloor+1}}$, it follows that~${-(n+1)\alpha<-n\beta}$, implying that individual intervals intersect and that, therefore,~$[-k\alpha,\infty)\subseteq\Xi$. Choosing $n=1$, $b_1=\inf\supp(\nu_{\eta})$ and letting $a_1$ run through~$[-k\alpha,\infty)$, we find from~\eqref{support-eta-cpp} that $[\re^{-k\alpha}\inf\supp(\nu_{\eta}),\infty)\subseteq\supp(V)$. If $-\eta$ is a subordinator instead, the claim follows by symmetry.
	
	\medskip
	If $\eta$ has jumps of both signs, denote $z_+\negmedspace=\inf(\supp(\nu_{\eta})\cap\RR_+)$ and\linebreak $z_-\negmedspace=\sup(\supp(\nu_{\eta})\cap\RR_-)$ as in the proof of Theorem~\ref{thm-support} and, for a given $x\in\RR$, choose~$a_1,\dots,a_n,b_1,\dots,b_n$ as in~\eqref{choices-ai-bi}. Then $\ln(a_1),\dots\ln(a_{n-1})\in\Xi$ and also~$\ln(a_n)\in\Xi$ if~$n\in\NN$ is chosen sufficiently large. From~\eqref{support-eta-cpp} we can thus conclude that $x\in\supp(V)$ and, therefore, $\supp(V)=\RR$ as claimed.
	
	\medskip
	(ii) Note that the probability that the killing occurs before the first jump of $\xi$ is positive and so is the probability that, additionally, $\eta$ jumps $n$ times for some $n\in\NN$ in the time interval~$[0,\tau]$. As $\supp(\nu_{\eta})$ contains the interval $[\alpha,\beta]$, it follows that
	\begin{align*}
			\supp(V)&\supseteq\Big\{\sum_{j=1}^n b_j \Big|n\in\NN_0,b_j\in\supp(\nu_{\eta})\Big\}\\
			&\supseteq\Big\{\sum_{j=1}^n b_j \Big|n\in\NN_0,b_j\in[\alpha,\beta]\Big\}=\{0\}\cup\Big(\bigcup_{\ell=1}^{k-1}[\ell \alpha,\ell \beta]\Big)\cup[k\alpha,\infty),
	\end{align*}
	where we set again $k=\smash{\lfloor\frac{\alpha}{\beta-\alpha}\rfloor+1}$. To show that the interval contained in $\supp(V)$ is considerably larger whenever ${\ln (\beta) -\ln(\alpha) \geq - \sup \supp(\nu_\xi)}=:-y_-$ is satisfied, choose $n=1$, as well as $a_1=\smash{e^{-my_-}}$ for~$m\in\NN$ and $b_1\in[\alpha,\beta]$. From~\eqref{support-eta-cpp} it now follows directly that
	$\{\re^{-my_-}[\alpha,\beta],\ m\in\NN\}\subseteq\supp(V)$.
	Since $\ln (\beta) -\ln(\alpha) \geq -y_-$, we find that
	$\re^{-my_-}\alpha\leq \re^{-(m-1)y_-}\beta$
	such that the intervals $\re^{-(m-1)y_-}[\alpha,\beta]$ and $\re^{-my_-}[\alpha,\beta]$ overlap. In particular, we obtain that~${[\alpha,\infty)\cap\supp(V)}=[\alpha,\infty)$, yielding the claim. \\
	Part (iii) follows by symmetry.
	
	\medskip
	(v) Without loss of generality, assume that $z_1=-1$ such that $z_2>0$ is irrational and let~$x\mod1$ denote the quantity $x-\lfloor x\rfloor$ for a real number $x$. Then $z_2\mod1$ is also irrational and the orbit of $z_2$ under the corresponding rotation in the circle group $([0,1),+)$, where $x_1+x_2=(x_1+x_2)\mod1$, is dense (see, e.g.,~\cite[Prop.~1.3.3]{KatokHasselblatt1995}), i.e.,
	\begin{displaymath}
			\overline{\{nz_2\mod1,\ n\in\NN_0\}}=[0,1].
	\end{displaymath}
	Recalling that the killing may occur before the first jump of the process $\xi$, it follows from~\eqref{support-eta-cpp} that
	\begin{align*}
			\supp(V)&\supseteq\overline{\Big\{\sum_{j=1}^nb_n\Big|n\in\NN_0,b_n\in\supp(\nu_{\eta})\Big\}}
			\supseteq\overline{\{n_1z_1+n_2z_2|n_1,n_2\in\NN_0\}}.
	\end{align*}
	Observe that $z_1=-1$ implies $z_2\mod1=z_2+n_0z_1$ for some $n_0\in\NN_0$, yielding that the set on the right-hand side must include the interval $[0,1]$. Further, it includes all translations of $[0,1]$ by numbers $z=\widetilde{n}_1z_1+\widetilde{n}_2z_2$ with fixed $\widetilde{n}_1,\widetilde{n}_2\in\NN_0$, as can be seen from specifying the first $\widetilde{n}_1+\widetilde{n}_2$ jumps. Thus, it follows that $\supp(V)=\RR$. \\
	As the existence of $z_1<0$ and~$z_2>0$ with $\frac{z_2}{z_1}\in\RR\setminus\QQ$ is guaranteed if ${\nu_{\eta}(\RR_-)\neq 0 \neq \nu_{\eta}(\RR_+)}$ and either $\supp(\nu_{\eta})\cap\RR_+$ or $\supp(\nu_{\eta})\cap\RR_-$ contains an interval, Part (iv) follows immediately from the preceding argument.
\end{proof}

\renewcommand*{\bibname}{References}
\bibliographystyle{plain}

\end{document}